\newcommand{\If}{f^o}
\newtheorem{theorem}{Theorem}[section]
\newtheorem{proposition}[theorem]{Proposition}
\newtheorem{lemma}[theorem]{Lemma}
\newtheorem{definition}[theorem]{Definition}
\newtheorem{corollary}[theorem]{Corollary}
\theoremstyle{remark}
\newtheorem{example}[theorem]{Example}
\newtheorem{conjecture}[theorem]{Conjecture}
\newtheorem{remark}[theorem]{Remark}
\DeclareMathOperator{\link}{link}
\DeclareMathOperator{\Int}{Int}
\begin{document}

\title{Generalized Tchebyshev triangulations}

\author{G\'abor HETYEI}
\address{Department of Mathematics and Statistics,
  UNC-Charlotte, Charlotte NC 28223-0001.\hfill\break
WWW: \tt http://www.math.uncc.edu/\~{}ghetyei/.}
\email{ghetyei@uncc.edu}

\author{Eran NEVO}
\address{
Einstein Institute of Mathematics,
The Hebrew University of Jerusalem,
Jerusalem, Israel and
Department of Mathematics,
Ben Gurion University of the Negev,
Be'er Sheva 84105, Israel
}
\email{nevo@math.huji.ac.il}
\thanks{
This work was partially supported by a
grant from the Simons Foundation (\#245153 to G\'abor Hetyei).
Research of the second author was partially supported by Marie Curie
grant IRG-270923 and ISF grant 805/11.
}
\subjclass [2010]{Primary 05E45; Secondary 57Q15, 05A15, 11B83}

\date{\today}

\begin{abstract}
After fixing a triangulation $L$ of a $k$-dimensional simplex that has no
new vertices on the boundary, we introduce a triangulation operation on all
simplicial complexes that replaces every $k$-face with a copy of $L$,
via a sequence of induced subdivisions.
The operation may be performed
in many ways, but we show that the face numbers of the subdivided
complex depend only on the face numbers of the original complex, in a
linear fashion. We use this linear map to define a sequence of
polynomials generalizing the Tchebyshev polynomials of the first kind
and show, that in many cases, but not all,
the resulting polynomials have only real
roots, located in the interval $(-1,1)$. Some analogous results are
shown also for generalized Tchebyshev polynomials of the higher kind,
defined by summing over links of all original faces of a given
dimension in our generalized Tchebyshev triangulations. Generalized
Tchebyshev triangulations of the boundary complex of a cross-polytope
play a central role in our calculations, and for some of these we verify the
validity of a generalized lower bound conjecture by the second author.
\end{abstract}

\maketitle

\section{Introduction}
\setcounter{equation}{0}

This paper generalizes the following idea of a Tchebyshev triangulation
introduced in~\cite{Hetyei-Tch}: given any simplicial complex $K$,
subdivide each edge into
two parts by adding a new midpoint, and triangulate $K$ by performing a
stellar subdivision at each of the newly added midpoints. The order in
which these subdivisions have to be performed is subject to certain
rules, and then the face numbers of the resulting complex $K'$ are always
the same. The effect of this triangulation operation on the face numbers
$f_{j}$ is most easily described in terms of the {\em $F$-polynomial}
$\sum_{j\geq 0} f_{j-1} ((x-1)/2)^j$ of these complexes: the operation
taking the $F$-polynomial of $K$ into the $F$-polynomial of $K'$ is an
instance of the linear map $T: {\mathbb R}[x]\rightarrow {\mathbb R}[x]$
that takes each $x^n$ to $T_n(x)$, the $n^{\rm th}$ Tchebyshev
polynomial of the first kind.

A key result of the present paper, Theorem~\ref{thm:samef},
is a wide-reaching generalization of the idea presented above. It states
that the stellar subdivision operations above may be performed in any
order, and we always obtain the same face numbers. Furthermore, the
statement may be generalized to the situation where instead of
subdividing each edge into two parts, we subdivide each $k$-dimensional
face in the same way, using a fixed triangulation $L$ of the $k$-simplex
that adds new vertices only in the interior. The resulting {\em
  generalized Tchebyshev triangulations} are the subject of study of our
present paper.

As we will see in Section~\ref{sec:genf}, the face numbers in a
generalized Tchebyshev triangulation can be easily computed knowing the number of faces of $L$ with given numbers of vertices on the
boundary and in the interior of the $k$-simplex. At the level of the
$F$-polynomials, each fixed subdivision $L$ induces a linear map
$T^L: {\mathbb R}[x]\rightarrow {\mathbb R}[x]$, giving rise to a
natural generalization of Tchebyshev polynomials of the first kind,
introduced in Section~\ref{sec:TL}. These polynomials share many
properties with the ordinary Tchebyshev polynomials: they satisfy a
Fibonacci-type recurrence (whose degree depends on the dimension $k$),
their multiset of zeros is symmetric around the origin, and all their real
zeros belong to the interval $(-1,1)$. The question naturally arises,
whether these generalized Tchebyshev polynomials of the first kind also
have only real roots. A first answer to this question is given in
Section~\ref{sec:rr}, where we will see that the answer is always ``yes''
for $k=1$, and it is ``no'' for the simplest subdivision of a
$3$-simplex, obtained by adding one new vertex in the
interior and performing a stellar subdivision. In Section~\ref{sec:pn}
we prove a general real-rootedness result for a class of polynomial sequences,
which implies that all roots are real also for generalized Tchebyshev
polynomials of the first kind, induced by any valid subdivision of the two-dimensional simplex.

Generalizing the construction introduced in \cite{Hetyei-Tch}, in
Section~\ref{sec:higher} we introduce analogues of Tchebyshev
polynomials of the second kind, by considering summing over the links of
all faces of a given dimension of the original complex, in the
subdivided complex. A lot remains to be explored regarding these polynomials,
but a few results indicate that we have found an ``appropriate
generalization'': our generalized Tchebyshev polynomials of the $j^{\rm
  th}$ kind (where $j\leq k+1$) satisfy the same recurrence as our
generalized Tchebyshev polynomials of the first kind, the multisets of
their roots are also symmetric of the origin, and their real roots also
belong to the interval $(-1,1)$. We chose to postpone a deeper study of
their real-rootedness to a future occasion, but we established the fact
that, for $k=1$, all generalized Tchebyshev polynomials of the second
kind are real rooted.

Our results in Sections~\ref{sec:TL} and \ref{sec:higher} underline the
central importance of the generalized Tchebyshev triangulations of the
boundary complex of a cross-polytope, as the coefficients of our
generalized Tchebyshev polynomials can be directly read off the face
count in these complexes, refined by distinguishing between original and
newly added vertices; see the important Corollary~\ref{cor:Tcross}. In the concluding Section~\ref{sec:glbf} we prove
the validity of a conjecture by the second author~\cite[Conjecture
  1.5]{Nevo-Missing}, on strong generalized lower bounds for the face
numbers of some of these simplicial complexes.

Our generalized Tchebyshev triangulations offer infinitely
many new ways to subdivide a simplicial complex in such a manner that
the face numbers change in a predictable fashion. In this sense our
triangulation operations generalize the notion of a barycentric
subdivision. In fact, any barycentric subdivision arises by applying a
sequence of generalized Tchebyshev triangulation operations as follows:
for each $k$ that is less than or equal to the dimension of the complex
to be subdivided, we take the generalized Tchebyshev triangulation induced
by the stellar subdivision of a $k$-simplex obtained after adding a
single vertex in its interior (we perform these operation in decreasing order by
$k$). Investigating whether some face counting polynomial
associated to such a triangulation has only real roots is not a new
concern: Brenti and Welker~\cite{Brenti-Welker} showed that the
$h$-polynomial of the barycentric subdivision of a simplicial complex
with a nonnegative $h$-vector has only simple real zeros. In the future,
it would be worth finding an exact description of all triangulations of a
$k$-simplex that induces generalized Tchebyshev polynomials having only real
roots. Another interesting question is to fix a specific
generalized Tchebyshev triangulation operation, and to ask: to which simplicial
complexes can we apply them and obtain real-rooted $f$-polynomials
and/or $h$-polynomials? Finally, once we have a better understanding of
the generalized Tchebyshev polynomials of the higher kind, it will
be worth finding out how they are interconnected.

\section{Preliminaries}
\setcounter{equation}{0}
First we recall some basic definitions and results related to simplicial complexes. For further background see, for instance, \cite{BjornerTopMeth,Munkres}.
Next we recall some basic facts on Tchebyshev polynomials. These
polynomials play an important role in many areas of mathematics,
including combinatorics, numerical analysis and orthogonal
polynomials. However, we will only need facts on them that are
discussed in any introductory work on orthogonal polynomials, see for
instance~\cite{Chihara}. Most important formulas on Tchebyshev
polynomials are listed (without proof) in the work of Abramowitz and
Stegun~\cite{Abramowitz-Stegun}.

\subsection{Simplicial complexes.}
A \emph{simplicial complex} $K$ on the \emph{vertex set} $V$
is a collection of subsets of $V$ such that
$\{v\}\in K$ for all $v\in V$, and
if $G\subset F$ and $F\in K$, then $G\in K$. The elements of $K$
are called {\em faces}. In particular, the empty set is a face of $K$.
The \emph{link} of a face $\sigma$ is the subcomplex $\link_K(\sigma)=\{\tau\in K:\
\sigma\cap \tau=\emptyset, \ \sigma\cup \tau\in K\}$.
The \emph{join} of two simplicial complexes $K$ and $L$ on disjoint
vertex sets is $K*L=\{\sigma\cup\tau:\ \sigma\in K,\ \tau\in L\}$.
Thinking of the faces of $K$ as simplices glued together gives $K$ a
topology, and the \emph{geometric realization} of $K$, denoted $\|K\|$, stands for this topological space.
We say that $K$ is a \emph{triangulation} of a topological space $X$
if $\|K\|$ is homeomorphic to $X$.

The following well known result in piecewise linear topology will be
needed later, see e.g. \cite[Cor.\ 1.16-- Lemma 1.18]{Hudson}:
\begin{lemma}
\label{l:link} Let $L$ be a triangulation of a simplex
such that the only vertices of $L$ on the boundary $\partial(L)$ are the
original vertices of the simplex, and let $\tau\in L$ be any face. Then
$\link_L(\tau)$ is homeomorphic to a sphere if and only if it contains at
least one vertex in the interior of $\|L\|$, otherwise it is homeomorphic to a ball.
\end{lemma}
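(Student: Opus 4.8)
The plan is to reduce the statement to the standard behaviour of links in a combinatorial ball, recorded in the cited results of Hudson, after recasting the hypothesis ``$\tau$ contains a vertex in $\Int\|L\|$'' as the more intrinsic condition that $\tau$ is an \emph{interior face}, i.e.\ that the relative interior of $\|\tau\|$ is contained in $\Int\|L\|$ rather than in $\partial\|L\|$. First I would extract the consequences of the hypothesis that are actually used. Directly from the assumption, a vertex of $L$ lies in $\Int\|L\|$ exactly when it is not one of the $k+1$ original vertices. Moreover, if a face $\tau\in L$ has only original vertices then, being a subset of the vertex set of the simplex, either $\|\tau\|$ is a proper face of that simplex --- hence contained in its boundary $\partial\|L\|$ --- or $\tau$ is the full set of original vertices, which for a geometric simplicial complex forces $L$ to be the trivial triangulation. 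I would dispose of that degenerate case at the outset (then $L$ has no interior vertices, every proper $\link_L(\tau)$ is a simplex, hence a ball, and the case $\tau=$ full vertex set is a matter of the usual convention on $S^{-1}$), and assume henceforth that $L$ has an interior vertex; a small extra argument using the intersection property of a geometric complex then shows no face of $L$ equals the full original vertex set.

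The heart of the argument is the equivalence, for $\tau\in L$: $\tau$ contains a vertex in $\Int\|L\|$ if and only if the relative interior of $\|\tau\|$ is contained in $\Int\|L\|$. For the forward direction I would work in barycentric coordinates with respect to the $k+1$ original vertices: an interior vertex of $\tau$ has all barycentric coordinates strictly positive, while every vertex has them nonnegative, so each strict convex combination of the vertices of $\tau$ again has all coordinates strictly positive and hence lies in $\Int\|L\|$. For the converse I would argue contrapositively: if every vertex of $\tau$ is original then, by the first paragraph, $\|\tau\|\subseteq\partial\|L\|$, so its relative interior lies in $\partial\|L\|$. I would also record here the standard fact that the relative interior of a face of a triangulation of a manifold with boundary lies either entirely in the interior or entirely in the boundary, so these two locations are exhaustive.

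Finally I would invoke the quoted piecewise-linear facts (Hudson, Cor.\ 1.16--Lemma 1.18): $\|L\|$ is a PL $k$-ball, and for $\tau\in L$ of dimension $d$, $\link_L(\tau)$ is a PL $(k{-}1{-}d)$-sphere when the relative interior of $\|\tau\|$ lies in $\Int\|L\|$ and a PL $(k{-}1{-}d)$-ball when it lies in $\partial\|L\|$ (with the usual conventions in dimension $-1$, i.e.\ when $d=k$). Combining this with the equivalence of the previous paragraph gives: $\link_L(\tau)$ is a sphere when $\tau$ has a vertex in $\Int\|L\|$, and a ball otherwise. Since a ball and a sphere of the same dimension are not homeomorphic --- their reduced homology differs --- these two outcomes are mutually exclusive, which promotes the pair of implications to the asserted ``if and only if''. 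I expect the only real work to be the translation between the combinatorial hypothesis and the topological condition on $\|\tau\|$, together with careful bookkeeping of exactly where the assumption on $\partial(L)$ enters; the topological content itself is classical and is imported wholesale.
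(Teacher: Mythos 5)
Your proposal is correct, and it matches the paper's treatment: the paper gives no proof of Lemma~\ref{l:link}, quoting it as a standard piecewise linear fact from Hudson (Cor.\ 1.16--Lemma 1.18), which is exactly what you import. The only content you add beyond the citation --- the barycentric-coordinate translation between ``$\tau$ has a vertex in $\Int\|L\|$'' and ``the relative interior of $\|\tau\|$ lies in $\Int\|L\|$'', plus the degenerate-case bookkeeping --- is sound and is precisely the routine bridge the paper leaves implicit.
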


Let $(A_i)_{i\in I}$ be a family of nonempty sets. Its \emph{nerve}
$\mathcal{N}((A_i)_I)$ is the simplicial complex with vertex set $I$ and
faces all $F\subseteq I$ such that $\cap_{i\in F}A_i\neq \emptyset$. A
version of the Borsuk nerve theorem \cite{Borsuk48NerveThm} that we will need is the following, see Bj\"{o}rner \cite[Theorem 10.6]{BjornerTopMeth})
\begin{theorem}[Nerve theorem]\label{thm:nerve}
Let $(A_i)_{i\in I}$ be a family of subcomplexes of a simplicial complex $K$ such that $\cup_I A_i = K$ and for every $J\subseteq I$, $\cap_{i\in J}A_i$ is either empty or contractible. Then the nerve complex $\mathcal{N}((A_i)_I)$ is homotopy equivalent to $K$.
\end{theorem}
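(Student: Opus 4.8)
I would prove this by induction on $|I|$, using that inclusions of subcomplexes are cofibrations, so that a union of two subcomplexes meeting in a third is a genuine homotopy pushout, and then glue.

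Write $I=\{1,\dots,n\}$, set $A=A_n$ and $K'=A_1\cup\dots\cup A_{n-1}$, so $K=K'\cup A$ and $K'\cap A=\bigcup_{i<n}(A_i\cap A)$. Taking $J=\{n\}$ in the hypothesis shows $A$ is contractible. The families $(A_i\cap A)_{i<n}$ and $(A_i)_{i<n}$ are covers by subcomplexes of $K'\cap A$ and of $K'$, all of whose finite intersections are intersections of several $A_j$'s, hence empty or contractible; so the inductive hypothesis gives homotopy equivalences $K'\simeq\mathcal N'$ and $K'\cap A\simeq\mathcal N((A_i\cap A)_{i<n})$, where $\mathcal N'$ is the subcomplex of $\mathcal N:=\mathcal N((A_i)_I)$ induced on $\{1,\dots,n-1\}$. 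One checks at once that $\mathcal N((A_i\cap A)_{i<n})$ is canonically isomorphic to $\link_{\mathcal N}(n)$: a face $F\subseteq\{1,\dots,n-1\}$ has $\bigcap_{i\in F}(A_i\cap A)\neq\emptyset$ exactly when $F\cup\{n\}\in\mathcal N$. Since $\mathcal N=\mathcal N'\cup\operatorname{star}_{\mathcal N}(n)$ with $\mathcal N'\cap\operatorname{star}_{\mathcal N}(n)=\link_{\mathcal N}(n)$ and $\operatorname{star}_{\mathcal N}(n)$ a cone, hence contractible, both $K$ and $\mathcal N$ are presented as homotopy pushouts of spans of the form $(\text{complex})\hookleftarrow(\text{subcomplex})\hookrightarrow(\text{contractible})$: namely $K'\hookleftarrow K'\cap A\hookrightarrow A$ and $\mathcal N'\hookleftarrow\link_{\mathcal N}(n)\hookrightarrow\operatorname{star}_{\mathcal N}(n)$.

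I would then invoke the gluing lemma for homotopy pushouts: a map of such spans that is a homotopy equivalence on all three objects and makes the two connecting squares commute up to homotopy induces a homotopy equivalence of pushouts. Feeding in $A\simeq *\simeq\operatorname{star}_{\mathcal N}(n)$ together with the equivalences $K'\simeq\mathcal N'$ and $K'\cap A\simeq\link_{\mathcal N}(n)$ yields $K\simeq\mathcal N$, completing the induction; the base case $n=1$ merely records that a contractible $K=A_1$ has a point for its nerve.

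The one delicate point is the coherence needed to run the gluing lemma: the equivalences $K'\simeq\mathcal N'$ and $K'\cap A\simeq\link_{\mathcal N}(n)$ come from two separate uses of the inductive hypothesis and need not a priori assemble into a homotopy-commutative square. The standard remedy, which I would adopt, is to strengthen the inducted statement so that it produces natural maps: to any such cover attach the ``blow-up'' complex $Z$, the homotopy colimit of the diagram $F\mapsto\bigcap_{i\in F}A_i$ over the poset of nonempty faces of $\mathcal N$ with its restriction maps, together with natural maps $Z\to K$ (collapse $Z$ onto the union $\bigcup_iA_i=K$) and $Z\to\mathcal N$ (collapse each piece $\bigcap_{i\in F}A_i$ to the vertex $F$ of the barycentric subdivision of $\mathcal N$). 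The map $Z\to\mathcal N$ is a homotopy equivalence because every piece is contractible, so $Z$ is the homotopy colimit of a pointwise equivalence onto the constant point diagram; and $Z\to K$ is a homotopy equivalence for any cover by subcomplexes, proved by the same induction on $|I|$ but now with honestly natural maps. Composing, $K\simeq Z\simeq\mathcal N$. I expect the only real work to be the bookkeeping in this last induction — decomposing $Z$ along the last vertex compatibly with the maps to $K$ — while everything else is the standard formalism of homotopy pushouts and cofibrations for simplicial complexes.
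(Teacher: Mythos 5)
The paper does not prove this statement at all: it is quoted as a known result (a version of Borsuk's nerve theorem) with a pointer to Bj\"orner's survey \cite[Theorem 10.6]{BjornerTopMeth}, so there is no internal proof to compare against. Your outline is, in substance, the standard proof from the literature, and it is essentially sound at the level of a sketch. Your diagnosis of the weak point in the naive induction is exactly right: the two applications of the inductive hypothesis to $K'$ and to $K'\cap A$ give equivalences with $\mathcal N'$ and with $\link_{\mathcal N}(n)$ that come with no compatibility, so the gluing lemma cannot be fed directly (indeed, as stated your gluing lemma ``up to homotopy'' would also need the homotopies as data even to produce a map of pushouts). Your repair via the blow-up complex $Z=\operatorname{hocolim}_{F}\bigcap_{i\in F}A_i$ is the classical one: $Z\to\mathcal N$ is an equivalence by homotopy invariance of homotopy colimits over a fixed index poset, since all pieces are contractible and the hocolim of the constant point diagram is the order complex of the face poset, i.e.\ the barycentric subdivision of $\mathcal N$; and $Z\to K$ is the projection-lemma statement, valid for any cover by subcomplexes (closed cofibrations), proved by the induction you indicate, where the sub-hocolim over faces containing the last vertex retracts to $A_n$ because that subposet has $\{n\}$ as a cone point. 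Two small caveats: your induction presumes $I$ finite (enough for this paper, where $I$ indexes the facets of a finite complex, but the theorem as stated allows infinite $I$, for which one passes to a colimit over finite subfamilies or argues with the hocolim directly); and in the inductive step some $A_i\cap A_n$ may be empty, so one should silently discard empty members, which is harmless since they contribute no vertex to the nerve and the identification of $\mathcal N((A_i\cap A_n)_{i<n})$ with $\link_{\mathcal N}(n)$ goes through verbatim. What your route buys, compared with the paper's citation, is a self-contained argument whose only imported ingredients are standard homotopy-colimit facts; what it costs is precisely the bookkeeping you acknowledge, which is where a complete write-up would have to spend its effort.
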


The \emph{dimension} of a face $\sigma$ is defined by
$\dim(\sigma):=|\sigma|-1$; the dimension of a simplicial complex
$K$ is defined by $\dim(K):=\max\{ \dim(\sigma) \ : \ \sigma\in K\}$.
Let $f_i(K)$ be the number of
$i$-dimensional faces (\emph{$i$-faces}) of $K$, and let $f(K)$
be the \emph{$f$-vector} of $K$, namely, $f(K):=(f_{-1}(K),f_0(K),\ldots,f_{\dim(K)}(K))$.
In polynomial form, the \emph{$f$-polynomial} of $K$ is
$f(K,x):=\sum_{0\leq i\leq \dim(K)+1}f_{i-1}(K)x^i$.
This information can also be encoded in the \emph{$h$-polynomial} of $K$, $h(K,x):=\sum_{0\leq i\leq \dim(K)+1}h_{i}(K)x^i$, given by
$h_i=\sum_{j=0}^i (-1)^{i-j}\binom{n-j}{i-j}f_{j-1}$ where $n=\dim(K)+1$. In particular, $f_{i-1}=\sum_{j=0}^i\binom{n-j}{i-j}h_{j}$.
Given a simplicial complex $K$ and a map (called \emph{coloring})
$a:V(K)\rightarrow \{x_1,x_2,\ldots ,x_s\}$, the \emph{flag $f$-polynomial} of $(K,a)$ is
$$f_a(K;x_1,\ldots ,x_s):=\sum_{F\in K}\prod_{v\in F}a(v)\in \mathbb{Z}[x_1,\ldots,x_s].$$

A set $F$ is called a \emph{missing face} of a simplicial complex $K$ if $F\notin K$ and its boundary complex $\partial(F)=2^F\setminus \{F\}$ is a subcomplex of $K$.
For $F\in K$, the \emph{stellar subdivision} of $K$ at $F$ is the simplicial complex $K(F)=\operatorname{Stellar}_K(F)=(K\setminus \{T\in K:\ F\subseteq T\}) \cup \{v_F\}* \partial (F) * \link_K(F)$, where $v_F$ is not a vertex of $K$. The \emph{$j$-skeleton} of $K$, denoted $K_{\leq j}$, is the subcomplex of $K$ consisting of all faces in $K$ of dimension $\leq j$.

\subsection{Tchebyshev polynomials}
The Tchebyshev polynomials $T_n(x)$ of the first kind and the
Tchebyshev polynomials $U_n(x)$ of the second kind are usually defined by the
formulas
\begin{equation}
\label{eq:Ttrigdef}
T_n(\cos(\alpha))=\cos(n\cdot \alpha)\quad\mbox{and}\quad
U_n(\cos(\alpha))=\frac{\sin((n+1)\alpha)}{\cos(\alpha)},
\end{equation}
see \cite[(22.3.15) and (22.3.16)]{Abramowitz-Stegun}. Equivalently,
they may be defined recursively as follows. The polynomial sequences
$\{T_n(x)\}_{n=0}^{\infty}$ and $\{U_n(x)\}_{n=0}^{\infty}$
are the unique solutions to the recurrence $P_n(x)=2x
P_{n-1}(x)-P_{n-2}(x)$ (all occurrences of the letter $P$ need to be
                            replaced by $T$ resp.\ $U$,
see \cite[(22.7.4) and (22.7.5)]{Abramowitz-Stegun}), subject to the
initial conditions $T_0(x)=1$, $T_1(x)=x$ and  $U_0(x)=1$, $U_1(x)=2x$.

They share the following properties, which will be explored for our generalized  Tchebyshev polynomials.

\begin{theorem}\label{thm:basicsTU}
For all $n\geq 0$, the polynomials $T_n(x)$ and $U_n(x)$ have the
following properties:
\begin{enumerate}
\item their degree is $n$,
\item they are symmetric in the sense that  $(-1)^n P_n(-x)=P_n(x)$
  holds for $P_n=T_n,U_n$, and
\item all their roots are real, simple, and belong to the interval $(-1,1)$.
\end{enumerate}
\end{theorem}
The first two statements are immediate consequences of the recursive
definition, the third statement may be shown in at least two different
ways: by direct calculation of the roots from (\ref{eq:Ttrigdef}), or
by invoking general results from the theory of orthogonal
polynomials. We refer the reader to \cite{Chihara} for
details which we will not review here as most of our generalized
Tchebyshev polynomials will {\em not} be sequences of orthogonal
polynomials, see Remark~\ref{rem:notorth}.


\section{Generalized Tchebyshev triangulations}
\label{sec:gtch}

In this section,
and throughout the rest of the paper,
we fix a triangulation $L$ of the $k$-dimensional
simplex such that the only vertices of $L$ on the boundary $\partial(L)$
are the original vertices of the simplex. We will use the notation
$\partial(L)$ for the subcomplex of boundary faces (this is
also the boundary complex of the original $k$-simplex) and the
notation $\Int(L)$ for the family of (closed) faces contained in the interior of
$L$. Given any family of faces $C$ we will use $V(C)$ to denote the set
of vertices of the faces in the family $C$. Any face $\sigma\in L$ may
be uniquely written as the disjoint union of $\sigma\cap V(\partial(L))$
and $\sigma\cap V(\Int(L))$.
\begin{definition}
\label{def:gtch}
Given any simplicial complex $K$, a simplicial complex $K'$ is a
{\em generalized Tchebyshev triangulation of $K$ induced by $L$} if
there is an ordered list $\sigma_1,\ldots,\sigma_m$ of the
$k$-dimensional faces of $K$, listing each $k$-dimensional face exactly
once, and an ordered list $K_0,K_1,\ldots,K_m$ of
simplicial complexes such that $K_0=K$, $K_m=K'$ and, for each $i\geq
1$, the complex $K_{i}$ is obtained from $K_{i-1}$ by replacing the face
$\sigma_i$ with an isomorphic copy $L_i$ of $L$ and the family of faces
$\{\sigma_i\cup\tau\:: \tau\in \link(\sigma_i)\}$ containing $\sigma_i$
with the family of faces $\{\sigma'\cup \tau \:: \sigma'\in
L_i, \tau\in \link(\sigma_i)\}$.
In short, replace the closed star $\overline{\sigma_i}*\link(\sigma_i)$ by the complex $L_i * \link(\sigma_i)$.
\end{definition}
Obviously, given any ordered list $\sigma_1,\ldots,\sigma_m$ of the
$k$-dimensional faces of $K$,
and a list of bijections $\phi_{\sigma_i}:V(\partial(L))\rightarrow V(\sigma_i)$ for $1\leq i\leq m$,
 there is exactly one
list of simplicial complexes $K_0,K_1,\ldots,K_m$ satisfying the
condition given in Definition~\ref{def:gtch}. Using a different list may
result in a non-isomorphic triangulation, as shown in the
following example.
\begin{example}
\label{ex:2triang}
Let $L$ be the path with $2$ edges
(triangulating the $1$-simplex), $K$ be the union of the two triangles
$\{v_1, v_2, v_3\}$ and $\{v_1,v_2, v_4\}$ sharing the edge $\{v_1,
v_2\}$. Let $K'$ be the generalized Tchebyshev triangulation of $K$
defined by the ordering of edges $\{v_1,v_2\}$, $\{v_1,v_3\}$, $\{v_2, v_3\}$, $\{v_1, v_4\}$, $\{v_2,v_4\}$ and let $K''$ be the generalized
Tchebyshev triangulation of $K$ defined by the ordering $\{v_1,v_3\}$,
$\{v_2,v_3\}$, $\{v_2,v_4\}$, $\{v_1,v_4\}$, $\{v_1,v_2\}$.
Then $K'$ is a cone (over an $8$-cycle) and $K''$ is not, see
Fig.\ \ref{fig:2triang}.
(Here specifying the bijection $\phi_{\sigma_i}$ is not important as we obtain isomorphic complexes for both choices.)
\end{example}
\begin{figure}[h]
\begin{center}
\input{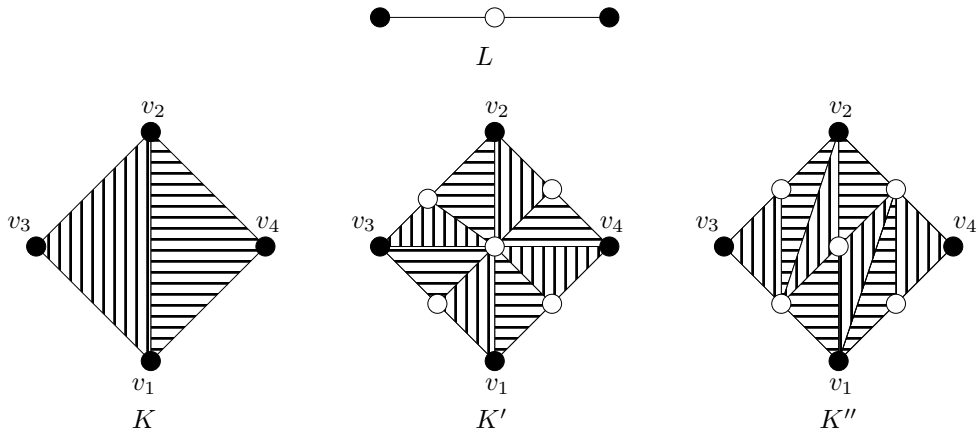}
\end{center}
\caption{Illustration to Example \ref{ex:2triang}}
\label{fig:2triang}
\end{figure}


However, $K'$ and $K''$ have the same $f$-vector. This is not a coincidence as the following result shows.

\begin{theorem}
\label{thm:samef}
Given
an
arbitrary simplicial complex $K$, all
generalized Tchebyshev triangulations of $K$, induced by $L$, have the
same $f$-vector.
\end{theorem}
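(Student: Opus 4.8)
The key is that any two orderings of the $k$-faces (together with chosen boundary-vertex bijections) can be connected by a sequence of transpositions of adjacent elements in the list, so it suffices to prove that swapping the order in which two consecutive $k$-faces are subdivided leaves the $f$-vector unchanged. This reduces the problem to a local statement, but one must be careful: the two $k$-faces $\sigma_i$ and $\sigma_{i+1}$ may share vertices, or even share lower-dimensional faces. The plan is to show that the operations ``subdivide the star of $\sigma_i$ by $L_i$'' and ``subdivide the star of $\sigma_{i+1}$ by $L_{i+1}$'' commute as operations on simplicial complexes (up to isomorphism fixing all original vertices), so that $K_{i+1}$ obtained one way is isomorphic to $K_{i+1}$ obtained the other way, which gives equality of $f$-vectors for the full triangulations by a straightforward induction on $m$.

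\textbf{Step 1 (reduction to adjacent transpositions).} Since the symmetric group $S_m$ is generated by adjacent transpositions, and since changing the bijection $\phi_{\sigma_i}$ on a single face only relabels the new interior vertices (an isomorphism of the intermediate and hence final complex), it is enough to fix the bijections and compare the orderings $(\ldots,\sigma_i,\sigma_{i+1},\ldots)$ and $(\ldots,\sigma_{i+1},\sigma_i,\ldots)$. Everything before position $i$ and after position $i+1$ is identical, so I only need: starting from a common complex $K_{i-1}$, subdividing $\sigma_i$ then $\sigma_{i+1}$ yields the same complex as subdividing $\sigma_{i+1}$ then $\sigma_i$.

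\textbf{Step 2 (the two $k$-faces have disjoint stars modulo each other).} The crucial geometric observation is that $\sigma_i$ is \emph{not} a face of the subdivided complex $L_{i+1}$-region and vice versa: since $L$ adds no new boundary vertices, the original $k$-simplex $\sigma_{i+1}$, viewed inside the subdivision $L_{i+1}$, is no longer a face (its interior got subdivided), so subdividing at $\sigma_{i+1}$ does not touch any face containing $\sigma_i$ unless $\sigma_i \subseteq \sigma_{i+1}$, which is impossible for distinct $k$-faces. Concretely, the faces of $K_{i-1}$ containing $\sigma_i$ form the set $\{\sigma_i \cup \tau : \tau \in \link(\sigma_i)\}$; a face of this form contains $\sigma_{i+1}$ only if $\sigma_{i+1} \subseteq \sigma_i \cup \tau$, i.e. the extra vertices of $\sigma_{i+1}$ lie in $\tau$, so $\sigma_{i+1} \setminus \sigma_i \in \link(\sigma_i)$ relative to the right ambient faces. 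The point to verify is that the subdivision of the star of $\sigma_i$ and the subdivision of the star of $\sigma_{i+1}$ interact only on their common part $\{\rho : \sigma_i \cup \sigma_{i+1} \subseteq \rho\}$, and there the two operations are performed on \emph{different} factors of a join decomposition $\sigma_i * (\sigma_{i+1}\setminus\sigma_i) * \link(\sigma_i\cup\sigma_{i+1})$ — hmm, but $\sigma_i$ and $\sigma_{i+1}$ need not be disjoint, so I should instead write the relevant faces as $\sigma_i \cup \sigma_{i+1} \cup \tau$ with $\tau$ ranging over the common link, replace $\sigma_i$ by (a sub-complex of) $L_i$ and $\sigma_{i+1}$ by (a sub-complex of) $L_{i+1}$ independently, and check that doing these two replacements in either order produces the same set of faces. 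Because a replacement at $\sigma_i$ only rewrites the part of each face lying in $\sigma_i$ (leaving the $\sigma_{i+1}\setminus\sigma_i$ part and the $\tau$ part untouched), and symmetrically for $\sigma_{i+1}$, the two rewrites commute set-theoretically.

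\textbf{Main obstacle.} The delicate point — and where I would spend the most care — is exactly the bookkeeping when $\sigma_i$ and $\sigma_{i+1}$ overlap in a face $\gamma = \sigma_i \cap \sigma_{i+1}$ of dimension between $0$ and $k-1$. Then a face $\rho$ of $K_{i-1}$ that contains both $\sigma_i$ and $\sigma_{i+1}$ decomposes as $\rho = \sigma_i \cup \sigma_{i+1} \cup \tau$ but the union is not disjoint, so one cannot naively use the join formula. I would handle this by noting that in the subdivision $L_i$ of the abstract simplex on vertex set $\sigma_i$, the subcomplex induced on the boundary is unchanged (again because $L$ has no new boundary vertices), so in particular the sub-simplex $\gamma \subseteq \sigma_i$ survives in $L_i$ and its link in $L_i$ decomposes as a join; this lets me factor out $\gamma$ cleanly. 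After this factoring, the replacements at $\sigma_i$ and $\sigma_{i+1}$ act on genuinely independent vertex sets (the interiors of the two copies of $L$ plus the ``halves'' $\sigma_i\setminus\gamma$, $\sigma_{i+1}\setminus\gamma$), and commutativity is then a routine check. Once commutativity of a single adjacent swap is established, Step 1 finishes the proof, and equality of $f$-vectors is immediate from isomorphism (indeed one gets the stronger statement that all these triangulations are isomorphic, provided one tracks the vertex bijections — but for the $f$-vector claim, even a bijection between the final complexes preserving face sizes suffices, and that is what the commutation gives).
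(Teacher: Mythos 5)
There is a genuine gap, and in fact the central claim of your plan is false: if every adjacent transposition of the subdivision order produced isomorphic complexes, then (since adjacent transpositions generate the symmetric group) \emph{all} orderings would produce isomorphic complexes, and the paper's own Example~\ref{ex:2triang} shows this fails already for $L$ the path with two edges and $K$ two triangles glued along an edge: $K'$ is a cone over an $8$-cycle and $K''$ is not, so they are not isomorphic (they only share an $f$-vector). Concretely, Step~2 breaks when $\sigma_i\cup\sigma_{i+1}$ is a face of $K_{i-1}$. Subdividing at $\sigma_i$ replaces a face $\sigma_i\cup\sigma_{i+1}\cup\tau$ by the faces $\sigma'\cup(\sigma_{i+1}\setminus\sigma_i)\cup\tau$ with $\sigma'\in L_i$, and only those new faces with $\sigma_i\cap\sigma_{i+1}\subseteq\sigma'$ still contain $\sigma_{i+1}$; so the star of $\sigma_{i+1}$ that gets rewritten in the second step depends on whether $\sigma_i$ was done first, and the two rewrites do \emph{not} commute set-theoretically. (Already for two edges of a triangle meeting at a vertex, subdividing in the two orders produces two different triangulations of the triangle, with different interior diagonals.) Since the non-commutation is real, no bookkeeping of the overlap $\gamma=\sigma_i\cap\sigma_{i+1}$ can restore an isomorphism, and any salvage of the transposition strategy would have to prove only $f$-vector invariance under a swap by an actual count --- which your argument does not supply.

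The paper takes an entirely different route: it proves the stronger Theorem~\ref{thm:samefxy}, that the flag $f$-polynomial of $K'$ (distinguishing old and new vertices) depends only linearly on the $f$-vector of $K$. One first reduces to $K$ a simplex by peeling off top-dimensional faces, then argues by induction on the dimension of the simplex: after subdividing the first $k$-face, the complex is covered by the closed facets of the subdivided simplex joined with the remaining vertices, and inclusion--exclusion over nonempty families of facets of $L$ yields the recurrence \eqref{eq:fnxy}, whose right-hand side involves only data of $L$ and lower-index polynomials $f_{n'}(x,y)$ --- hence is independent of all the ordering and labelling choices. Independence of the order is thus obtained as a by-product of an explicit linear recurrence for the face count, not from any local commutation of the subdivision operations, which, as the example shows, does not hold.
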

Theorem~\ref{thm:samef} follows from setting $y=x$ in the following,
more general statement.

Let $c=c_K:V(K')\rightarrow \{x,y\}$ be the coloring $c(v)=x$ if $v\in V(K)$ and $c(v)=y$ if $v\in V(K')-V(K)$.

\begin{theorem}
\label{thm:samefxy}
Let $K'$ be
any generalized Tchebyshev triangulation of $K$ induced by $L$. Then the flag $f$-polynomial of $(K',c)$, namely
$$
f_c(K';x,y)=\sum_{\sigma\in K'} x^{|\sigma\cap V(K)|}y^{|\sigma\cap
  (V(K')\setminus V(K))|},$$
depends only on the $f$-vector of $K$ in a linear fashion and is
independent of the particular choice of $K'$. Thus, we will denote it by
$f(K;x,y)$.
\end{theorem}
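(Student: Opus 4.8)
The plan is to reduce everything to a local computation over the faces of $K$. Note first that $K'$ is a geometric subdivision of $K$, so every face $\sigma\in K'$ has a well-defined carrier $\mathrm{carr}(\sigma)\in K$, the minimal face whose geometric realization contains $\|\sigma\|$; concretely, writing $W_i$ for the new vertices of the copy $L_i$ of $L$ that replaced $\sigma_i$, one has $\mathrm{carr}(\sigma)=(\sigma\cap V(K))\cup\bigcup\{\sigma_i:\sigma\cap W_i\neq\emptyset\}$. Partitioning by carrier,
\[ f_c(K';x,y)=\sum_{F\in K}\psi(F),\qquad \psi(F):=\sum_{\sigma\in K',\ \mathrm{carr}(\sigma)=F}x^{|\sigma\cap V(K)|}y^{|\sigma\setminus V(K)|}. \]
The key point I would establish is that the faces of $K'$ lying inside $\|F\|$ depend only on the subdivisions of the $k$-faces contained in $F$, and are realized by subdividing exactly those in a suitable order, so they form a generalized Tchebyshev triangulation of the closed simplex $\overline F$ induced by $L$; hence $\psi(F)$ is the ``interior'' part of the flag $f$-polynomial of such a triangulation of $\overline{\Delta^{\dim F}}$. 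Thus it suffices to prove: \emph{all generalized Tchebyshev triangulations of a closed simplex, induced by $L$, have the same flag $f$-polynomial.} Granting this, $\psi(F)$ depends only on $\dim F$, so $f(K;x,y)=\sum_{d\ge-1}f_d(K)\,\psi_d(x,y)$ is linear in the $f$-vector and independent of $K'$; setting $y=x$ recovers Theorem~\ref{thm:samef}.

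To attack the simplex case I would use the one-step identity read off directly from Definition~\ref{def:gtch}: passing from $K_{i-1}$ to $K_i$ removes exactly the faces containing $\sigma_i$ and inserts exactly the faces $\rho\cup\tau$ with $\rho$ an interior face of $L_i$ and $\tau\in\link_{K_{i-1}}(\sigma_i)$. Since the vertices of $\sigma_i$ lie in $V(K)$ and those of $W_i$ do not, this gives
\[ f_c(K';x,y)=f(K,x)+\bigl(p_{\Int}(x,y)-x^{k+1}\bigr)\sum_{i=1}^m g_i(x,y), \]
where $p_{\Int}(x,y)=\sum_{\rho\in L,\ \rho\not\subseteq\partial(L)}x^{|\rho\cap V(\partial(L))|}y^{|\rho|-|\rho\cap V(\partial(L))|}$ depends only on $L$ and $g_i(x,y)=f_c(\link_{K_{i-1}}(\sigma_i);x,y)$, with the link two-coloured as a subcomplex of $K'$. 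So the whole problem becomes the order-independence of $\sum_i g_i$. Here I would analyze $\link_{K_{i-1}}(\sigma_i)$: because $\sigma_i$ is a $k$-face of the \emph{original} complex it is never subdivided before step $i$, and $\link_{K_{i-1}}(\sigma_i)$ is obtained from $\link_K(\sigma_i)$ by a sequence of subdivisions, one for each $j<i$ with $\sigma_i\cup\sigma_j\in K$, performed at the face $\sigma_j\setminus\sigma_i$ of the link by replacing the star of that face with a copy of $\link_L(\rho_j)$, where $\rho_j\in\partial(L)$ corresponds to $\sigma_i\cap\sigma_j$ and the interior vertices of $\link_L(\rho_j)$ acquire colour $y$; by Lemma~\ref{l:link} each $\link_L(\rho_j)$ is again a triangulation of a simplex with new vertices only inside, and once a star has been replaced no later step touches it, so every face is affected at most once.

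Finally I would induct on the dimension $d$ of the simplex. For $d\le k$ the claim is immediate: for $d<k$ the simplex is not subdivided, and for $d=k$ the triangulation is just $L$ (up to relabelling boundary vertices, all coloured $x$). For $d\ge k+1$, feed $\overline{\Delta^d}$ into the one-step identity: since $\link_{\overline{\Delta^d}}(\sigma_i)$ is the closed $(d-k-1)$-simplex, each $\link_{C_{i-1}}(\sigma_i)$ is, by the previous paragraph, a generalized-Tchebyshev-type triangulation of a lower-dimensional simplex, whose flag $f$-polynomial the induction hypothesis controls \emph{for a fixed subdivision recipe}. The hard part — which I expect to be the technical heart of the proof — is that the recipe applied to a given face of the link, and the order in which the link is built, genuinely depend on the ordering chosen for $\overline{\Delta^d}$, whereas the links $\link_L(\rho)$ of equidimensional boundary faces of $L$ need not have equal flag $f$-polynomials. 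The way around this is a ``first-child'' accounting: group the terms of $\sum_i g_i$ according to the face of $\overline{\Delta^d}$ to which they belong (a face $\nu$ of $\link_{C_{i-1}}(\sigma_i)$ corresponds to $\sigma_i\cup\nu$, hence to its carrier), note that the subdivision eventually applied to the star of $\nu$ is prescribed by the \emph{first} $k$-face $\sigma_j$ in the ordering with $\sigma_j\setminus\sigma_i\subseteq\nu$, and check that after also summing over $i$ the contribution attached to each face of $\overline{\Delta^d}$ forgets which child went first — exactly as when $d=k+1$, where for every $(k+1)$-face all but one of its $k+2$ facets get ``recoloured'' no matter which facet is processed first, so that the total contribution equals $f_k+f_{k+1}\bigl(x+(k+1)y\bigr)$ irrespective of order. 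Carrying this bookkeeping out in general, compatibly with the induction on $d$, is the step that will require genuine care; linearity in $f(K)$ then comes for free, the answer having been assembled from face-local pieces throughout.
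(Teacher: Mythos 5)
The reduction to the simplex case and your one-step identity are sound (indeed your $p_{\Int}(x,y)-x^{k+1}$ is exactly the paper's magic polynomial $r_L(x,y)$), but they only transform the problem rather than solve it: since $r_L\neq 0$, order-independence of $f_c(K';x,y)$ is \emph{equivalent} to order-independence of $\sum_i g_i(x,y)=\sum_i f_c(\link_{K_{i-1}}(\sigma_i);x,y)$, a statement of essentially the same depth as the theorem itself (it is the flag version of the sums over links that the paper treats in Section~\ref{sec:higher}, and which it derives \emph{from} Theorem~\ref{thm:samefxy}, not the other way around). Everything therefore rests on your ``first-child accounting,'' and that is precisely the step you leave as a plan: the subdivision installed on the star of a face $\nu$ of $\link_{K_{i-1}}(\sigma_i)$ is governed by $\link_L(\rho_j)$ for whichever boundary face $\rho_j$ of $L$ happens to come first in the chosen order, boundary faces of the same dimension can have genuinely different links with different flag $f$-polynomials, and you give no argument that these discrepancies cancel after summing over $i$; even the $d=k+1$ check is only asserted (and the formula $f_k+f_{k+1}(x+(k+1)y)$ looks specific to the one-interior-vertex case rather than to a general $L$). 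Secondary claims along the way (that $\link_L(\rho_j)$ is again a triangulation of a simplex with new vertices only in its interior, and that the intermediate links are obtained by the replacements you describe) also need justification via Lemma~\ref{l:link} and some PL care, but the order-independence of $\sum_i g_i$ is the genuine gap.

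For comparison, the paper never compares two orders directly. It also reduces to the $(n-1)$-simplex (by the restriction observation and induction on dimension), but then inducts on $n$ with a different decomposition: subdivide only the \emph{first} $k$-face $\sigma_1$, and cover the resulting complex by the closed facets $\iota(F)=F\cup(V(K)\setminus V(\sigma_1))$ for $F\in\mathcal{F}(\sigma_1')$. The restriction of $K'$ to each $\overline{\iota(F)}$, and to any intersection of such facets, is a join of a simplex on new vertices with a generalized Tchebyshev triangulation of a simplex on strictly fewer vertices --- strictly fewer because every facet of $L$ contains an interior vertex --- so the induction hypothesis applies to every piece no matter in which order the remaining $k$-faces are later subdivided, and inclusion--exclusion yields the recurrence \eqref{eq:fnxy}, which depends only on $L$ and $n$. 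To complete your argument you would either need to adopt a decomposition of this kind, or actually carry out (not just sketch) a proof that $\sum_i f_c(\link_{K_{i-1}}(\sigma_i);x,y)$ is independent of the ordering.
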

\begin{proof}
We need to show that given $L$, there exist linear functionals $l_{i,j}:\mathbb{R}[z]\rightarrow \mathbb{R}$ such that for any simplicial complex $K$ and any generalized Tchebyshev triangulation $K'$ of $K$ induced by $L$, one has
$$f_c(K';x,y)=\sum_{i,j}l_{i,j}(f(K,z))x^i y^j
.$$

\emph{Step 1 -- simplex case:}
First we prove for the case where $K$ is a simplex.

Assume $K$ is an $(n-1)$-dimensional simplex. We will show now by induction on $n$ that $f_c(K';x,y)$ does not depend on the choice of $K'$, only on $n$,
in which case we use $f_n(x,y)$ to stand
for $f_c(K';x,y)$. We claim that $f_n(x,y)$ is given by $f_n(x,y)=(1+x)^n$
for $n\leq k$ and by the following recurrence formula, which shows the independence of the choice of $K'$:
\begin{equation}
\label{eq:fnxy}
f_n(x,y)= \sum_{\emptyset\neq {\mathcal I}\subseteq {\mathcal F}(L)}
(-1)^{|{\mathcal I}|-1}(1+y)^{N(\mathcal I )}
f_{n-k-1+O(\mathcal I )}(x,y)
\quad\mbox{for $n\geq k+1$.}
\end{equation}
Here $\mathcal F(L)$ is the set of facets of $L$,
$N(\mathcal I):=|\bigcap_{F \in {\mathcal I}} F\cap V(\Int(L))|$, $O(\mathcal I):=|\bigcap_{F \in {\mathcal I}} F\cap V(\partial(L))|$,
and the summation runs
over all nonempty subfamilies ${\mathcal I}$ of $\mathcal F(L)$.

To prove (\ref{eq:fnxy}),
for $n>k$, we argue by induction on $n$: the subdivision of $\sigma_1$ induces a bijection $\iota$ from
$\mathcal F(\sigma_1')$ to $\mathcal F(K_1)$ by $\iota(F)=F\cup (V(K)-V(\sigma_1))$, for any facet $F$ of $\sigma_1$.

Denote by $\overline{U}=2^U$ the simplex on the finite vertex set $U$ and all its faces, and for a family $\mathcal{U}$ of finite sets let $\overline{\mathcal{U}}$ be the simplicial complex $\cup_{U\in \mathcal{U}}\overline{U}$.
For a face $\sigma\in K'$ let $N(\sigma):=\sigma\cap (V(K')\setminus
V(K))$ and $O(\sigma)=\sigma\cap V(K)$. (The letters N and O in
$N(\sigma)$ and $O(\sigma)$, respectively, are meant to refer to
``new'', respectively, ``old'' vertices.)

We will make use of the following easy observation:

(*) If $H$ is a subcomplex of $K$, then the induced order on the $k$-simplices in $H$
(keeping the same bijections $\phi_{\sigma}:V(\partial(L))\rightarrow V(\sigma)$ for all $k$-faces $\sigma\in H$)
gives $H'$ which equals to the restriction of $K'$ to the subspace
$\|H\|$ of $\|K\|$.
In particular, by restriction, $c_K$ induces a coloring of $V(H')$ which is the same as $c_H:V(H')\rightarrow \{x,y\}$.

Now, observe that for $F\in\mathcal F(\sigma_1')$  the restriction of $K'$ to $\|\overline{\iota(F)}\|$ is the subcomplex $(\overline{\iota(F)})'= \overline{N(F)}*(\ \overline{V(K)-V(\sigma)}*\overline{O(F)}\ )'$.
Multiplying the flag $f$-polynomials of each part in this join,
and applying the induction hypothesis to the right hand part,
we obtain that the contribution of $(\overline{\iota(F)})'$
to the flag $f$-polynomial $f_c(K';x,y)$ is
$(1+y)^{|N(F)|}f_{n-|N(F)|}(x,y)$. (Note that indeed $|N(F)|>0$ as all facets of $\sigma_1$ contain a vertex in $\Int(\sigma_1)$.)

As $K'=\cup_{F\in \mathcal F(\sigma_1')}(\overline{\iota(F)})'$, inclusion-exclusion gives (\ref{eq:fnxy}).
Here are the details:
\begin{multline*}
f_n(x,y)=
\sum_{\emptyset \neq S \subseteq \mathcal F(\sigma_1')}
(-1)^{|S|-1} f_c\left(\bigcap_{F\in S} \overline{\iota(F)}';x,y\right)=
\\
\sum_{\emptyset \neq S \subseteq \mathcal F(\sigma_1')}
(-1)^{|S|-1} f_c\left(\bigcap_{F\in S}\overline{N(F)} * (\bigcap_{F\in S} \overline{O(F) * (V(K)-V(\sigma_1))}\ )' ;x,y\right)
.
\end{multline*}
Now, $f_c(\cap_{F\in S}\overline{N(F)};x,y)=(1+y)^{|\cap_{F\in S}N(F)|}=(1+y)^{N(S)}$
and, by induction,
$$f_c\left(\bigcap_{F\in S} (\ \overline{O(F) * (V(K)-V(\sigma_1))}\ )' ;x,y\right)=
f_{|V(K)|-|V(\sigma_1)|+|\cap_{F\in S}O(F)|}(x,y)=
f_{n-(k+1)+O(S)}(x,y).$$
The bijection $\iota$ finishes the proof, when $K$ is a simplex.

\emph{Step 2 -- the general case:} We proceed by induction on dimension,
the case $\dim(K)<\dim(L)$ is trivial.
Using the observation (*) again, if $F$ is a top dimensional face of $K$ then
$$f_c(K';x,y)=f_c((K-\{F\})';x,y) + f_c(F';x,y) - f_c((\partial F)';x,y).$$
Note that if $G$ is another top dimensional face of $K$, then, by Step 1,
$f_c(F';x,y)=f_c(G';x,y)$, and by induction on dimension,
$f_c((\partial F)';x,y)=f_c((\partial G)';x,y)$.

Hence, by repeating for all top dimensional faces of $K$, we obtain for $d=\dim(K)$ that
$$f_c(K';x,y)=f_c((K_{\leq d-1})';x,y) + f_{d}(K) ( f_c(F';x,y) - f_c((\partial F)';x,y)).$$
By induction on dimension, we already know that there exist linear functionals $l_{i,j}^{(d)}:\mathbb{R}[z]_{\leq d}\rightarrow \mathbb{R}$ such that for all complexes $T$ of dimension $<d$, $f_c(T';x,y)=\sum_{i,j}l_{i,j}^{(d)}(f(T,z))x^i y^j$.
Now, using the assertion of the theorem for the $d$-simplex (see Step 1), $l_{i,j}^{(d)}$ can be extended to
$l_{i,j}^{(d+1)}:\mathbb{R}[z]_{\leq d+1}\rightarrow \mathbb{R}$
by setting
$l_{i,j}^{(d+1)}(z^{d+1})$ to be the coefficient of $x^iy^j$ in $f_c(F';x,y) - f_c((\partial F)';x,y)$.
\end{proof}

\begin{example}
\label{ex:Tch}
Let $k=1$ and let $L$ be the triangulation of the $1$-dimensional
simplex obtained by adding the midpoint of the $1$-simplex as a new
vertex, as in Example \ref{ex:2triang}. Certain generalized Tchebyshev
triangulations induced by this complex $L$ were considered
in~\cite{Hetyei-Tch}, where it was shown that the face numbers in these
triangulations are independent of the numbering of the vertices.
Theorem~\ref{thm:samef} generalizes these results even for this
particular choice of $L$.
\end{example}

\begin{remark}
Theorems~\ref{thm:samef} and~\ref{thm:samefxy} are true also when $K$ is a \emph{simplicial poset} (see \cite{Stanley-greenbook} for an exposition), and the proofs go through verbatim.
\end{remark}
Using Lemma~\ref{l:link} we may rephrase \eqref{eq:fnxy} as follows.
\begin{proposition}
\label{prop:fnxy}
The polynomials $f_n(x,y)$ are also given by $f_n(x,y)=(1+x)^n$ for
$n\leq k$ and by the recurrence
$$
f_n(x,y)=\sum_{\sigma\in L\setminus\partial(L)}
(-1)^{k+1-|\sigma|} (1+y)^{|\sigma\cap V(\Int(L))|}
                       \cdot f_{n-k-1+|\sigma\cap V(\partial(L))|} (x,y)\quad\mbox{for $n\geq
                         k+1$.}
$$
\end{proposition}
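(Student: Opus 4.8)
The plan is to show that the two expressions for $f_n(x,y)$ given in Theorem~\ref{thm:samefxy} (equation~\eqref{eq:fnxy}) and in Proposition~\ref{prop:fnxy} are equal term by term, after regrouping. The base case $n\le k$ is literally the same in both statements, so the entire content is the equivalence of the two recurrences for $n\ge k+1$. First I would take the right-hand side of \eqref{eq:fnxy}, the inclusion-exclusion sum over nonempty subfamilies ${\mathcal I}\subseteq {\mathcal F}(L)$, and observe that the intersection $\bigcap_{F\in{\mathcal I}}F$ is itself a face $\sigma$ of $L$ (since $L$ is a simplicial complex and each facet is a face). Conversely, every face $\sigma\in L$ arises as such an intersection. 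So the natural move is to reindex the sum by the face $\sigma=\bigcap_{F\in{\mathcal I}}F$ and collect all subfamilies ${\mathcal I}$ whose intersection is exactly $\sigma$.

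The key combinatorial step is then the following cancellation computation. For a fixed face $\sigma\in L$, let $a(\sigma)$ be the number of facets of $L$ containing $\sigma$. The subfamilies ${\mathcal I}$ with $\bigcap_{F\in{\mathcal I}}F\supseteq\sigma$ are exactly the nonempty subsets of those $a(\sigma)$ facets; among these, the ones with intersection \emph{equal} to $\sigma$ can be counted by inclusion-exclusion over the lattice of faces above $\sigma$, but it is cleaner to compute directly the total signed weight $\sum_{{\mathcal I}:\,\bigcap F=\sigma}(-1)^{|{\mathcal I}|-1}$ attached to $\sigma$. The standard identity $\sum_{j\ge 1}\binom{m}{j}(-1)^{j-1}=1-(1-1)^m$ shows that the signed count of \emph{all} nonempty subfamilies of a set of $m$ facets is $1$ when $m\ge 1$; applying Möbius inversion on the face poset of $\link_L(\sigma)$ (or an inductive peeling argument) one finds that the signed count of subfamilies with intersection exactly $\sigma$ equals $(-1)^{k+1-|\sigma|}$ precisely when $\link_L(\sigma)$ has no interior vertex — i.e. when $\sigma$ is a boundary face whose link is a ball — and $0$ otherwise. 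This is exactly where Lemma~\ref{l:link} enters: a face $\sigma\in L\setminus\partial(L)$, i.e. one with a vertex in the interior, has $\link_L(\sigma)$ homeomorphic to a sphere, and for a sphere the reduced Euler characteristic vanishes, giving signed count $0$ for those $\sigma$ — wait, that is the wrong sign of the claim, so let me restate: the faces that survive with coefficient $(-1)^{k+1-|\sigma|}$ are those in $L\setminus\partial(L)$, and the cancellation to $0$ happens for $\sigma\in\partial(L)$, because an interior-disjoint face of a ball-like link contributes a full reduced-Euler-characteristic-of-a-ball $=0$. Concretely: $\sum_{{\mathcal I}:\,\bigcap F=\sigma}(-1)^{|{\mathcal I}|-1} = -\widetilde\chi(\text{a certain subcomplex of }\link_L(\sigma))$, which is $0$ for a ball and $(-1)^{k+1-|\sigma|}$ for a sphere of the appropriate dimension. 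Getting this Euler-characteristic bookkeeping and its sign exactly right, and matching the exponents $|\sigma\cap V(\Int(L))|$ and $n-k-1+|\sigma\cap V(\partial(L))|$ with the ${\mathcal I}$-indexed exponents $|\bigcap F\cap V(\Int(L))|$ and $n-k-1+|\bigcap F\cap V(\partial(L))|$, is the main obstacle.

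Once the cancellation identity is established, the proof concludes immediately: \eqref{eq:fnxy} becomes $f_n(x,y)=\sum_{\sigma\in L}\left(\sum_{{\mathcal I}:\,\bigcap F=\sigma}(-1)^{|{\mathcal I}|-1}\right)(1+y)^{|\sigma\cap V(\Int(L))|}f_{n-k-1+|\sigma\cap V(\partial(L))|}(x,y)$, the inner sum vanishes for $\sigma\in\partial(L)$ and equals $(-1)^{k+1-|\sigma|}$ for $\sigma\in L\setminus\partial(L)$, and what remains is exactly the formula in Proposition~\ref{prop:fnxy}. I would also double-check the edge case $\sigma=\emptyset$: it lies in $\partial(L)$, its contribution must cancel, and indeed the signed count of \emph{all} nonempty subfamilies of ${\mathcal F}(L)$ is $1\ne 0$ — so I must be careful that the reindexing assigns $\sigma=\bigcap_{F\in{\mathcal I}}F$ and that only those ${\mathcal I}$ with a common vertex contribute a nonempty $\sigma$; the families with empty intersection are handled by the same Euler-characteristic formula applied to $\link_L(\emptyset)=L$ itself, which is a ball, hence contributes $0$, consistent with $\emptyset\in\partial(L)$ being excluded from the sum in Proposition~\ref{prop:fnxy}. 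This consistency check is worth spelling out because it is the one place a naive application of the alternating-sum identity would give the wrong answer.
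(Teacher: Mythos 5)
Your overall strategy coincides with the paper's: reindex the inclusion--exclusion sum \eqref{eq:fnxy} by the face $\sigma=\bigcap_{F\in\mathcal{I}}F$, show that the signed weight $\sum_{\mathcal{I}:\,\bigcap F=\sigma}(-1)^{|\mathcal{I}|-1}$ equals $(-1)^{k+1-|\sigma|}$ for $\sigma\in L\setminus\partial(L)$ and $0$ for $\sigma\in\partial(L)$, and let Lemma~\ref{l:link} decide between sphere and ball links. The gap is exactly at the bridge between the combinatorial alternating sum and the topology of the link, which you leave to ``M\"obius inversion or an inductive peeling argument'' and describe as $-\widetilde{\chi}$ of ``a certain subcomplex of $\link_L(\sigma)$''. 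The relevant object is not a subcomplex of the link: for $\sigma$ not a facet, the map $F\mapsto F\setminus\sigma$ identifies the families $\mathcal{I}$ with $\bigcap_{F\in\mathcal{I}}F=\sigma$ with the nonempty collections of facets of $\link_L(\sigma)$ having empty intersection, i.e.\ with the nonempty non-faces of the \emph{nerve} $\mathcal{N}$ of the facet cover of $\link_L(\sigma)$; since the full alternating sum over all subsets of $V(\mathcal{N})$ vanishes, the signed weight is $-\widetilde{\chi}(\mathcal{N})$, and the identification $\widetilde{\chi}(\mathcal{N})=\widetilde{\chi}(\link_L(\sigma))$ is precisely Borsuk's nerve theorem (Theorem~\ref{thm:nerve}), which the paper invokes and your sketch does not supply. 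Without this (or an equivalent crosscut/M\"obius computation carried out in full), the central cancellation identity is asserted rather than proved.

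Two smaller points. First, the case where $\sigma$ is a facet of $L$ must be treated separately, as in the paper: there $\link_L(\sigma)$ has no vertices, the nerve bookkeeping degenerates, and one simply observes that $\mathcal{I}=\{\sigma\}$ is the only family, giving coefficient $1=(-1)^{k+1-|\sigma|}$. Second, your interim assertion that ``for a sphere the reduced Euler characteristic vanishes'' is false --- a $(k-|\sigma|)$-sphere has $\widetilde{\chi}=(-1)^{k-|\sigma|}$, and it is the ball whose reduced Euler characteristic is $0$; your corrected restatement of the dichotomy (coefficient $(-1)^{k+1-|\sigma|}$ for faces meeting the interior, $0$ for boundary faces) is the right one and matches the paper. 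Your closing consistency check for $\sigma=\emptyset$ is correct but needs no special pleading: it is subsumed by the general computation, since $\link_L(\emptyset)=L$ is a ball and hence gets coefficient $0$.
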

\begin{proof}
Let us fix a face $\sigma\in L$ and consider only those subsets ${\mathcal
  I}\subseteq {\mathcal F}(L)$ for which we have
\begin{equation}
\label{eq:st}
\sigma=\bigcap_{F \in {\mathcal I}} F.
\end{equation}
Note that ${\mathcal I}\neq \emptyset$ is equivalent to $\sigma\in L$.
Each ${\mathcal I}\subseteq {\mathcal F}(L)$ satisfying \eqref{eq:st}
contributes a term of the form
$(-1)^{|{\mathcal I}|-1} (1+y)^{|\sigma\cap V(\Int(L))|}\cdot f_{n-k-1+|\sigma\cap V(\partial(L))|} (x,y)$
to the right hand side of \eqref{eq:fnxy}.

Assume first that $\sigma$ is not a facet of $L$. Then we may
identify each ${\mathcal I}$ with the collection of
facets ${\mathcal I}':=\{F\setminus \sigma \::\: F\in {\mathcal
  I}\}$ of $\link(\sigma)$. Condition \eqref{eq:st} is then
equivalent to requiring that the intersection of the facets of
$\link(\sigma)$ listed in ${\mathcal I}'$ is empty, equivalently
the vertex set ${\mathcal I}'$ is
not a face of the
nerve complex ${\mathcal N}(\link(\sigma))$ of
$\link(\sigma)$. In this case the total contribution
of all families  ${\mathcal I}\subseteq {\mathcal F}(L)$ satisfying
\eqref{eq:st} to the right hand side of \eqref{eq:fnxy} is
$$
\left(\sum_{{\mathcal I}'\subseteq V({\mathcal
    N}(\link(\sigma)))} (-1)^{|{\mathcal I}'|-1}
-\widetilde{\chi}({\mathcal N}(\link(\sigma)))\right)\cdot
(1+y)^{|\sigma\cap V(\Int(L))|}\cdot f_{n-k-1+|\sigma\cap
  V(\partial(L))|} (x,y).$$
Here $\widetilde{\chi}({\mathcal N}(\link(\sigma)))$ is the
reduced Euler characteristic of ${\mathcal N}(\link(\sigma))$
which is the same as $\widetilde{\chi}(\link(\sigma))$ by
Borsuk's nerve theorem, Theorem \ref{thm:nerve}.
By Lemma~\ref{l:link},
$\widetilde{\chi}(\link(\sigma))$ is nonzero exactly when
$\sigma\cap V(\Int(L))\neq \emptyset$ and then it is $(-1)^{k-|\sigma|}$,
the reduced Euler characteristic of a $(k-|\sigma|)$-dimensional sphere.
Since  $\sigma$ is not a facet of $L$, we have $|V({\mathcal
    N}(\link(\sigma)))|\geq 1$ and the sum $\sum_{{\mathcal I}'\subseteq
  V({\mathcal
    N}(\link(\sigma)))} (-1)^{|{\mathcal I}'|-1}$ is zero.

Finally, consider the case when $\sigma$ is a facet of $L$. Then
\eqref{eq:st} holds for exactly one ${\mathcal I}\subseteq
      {\mathcal F}(L)$, namely the family ${\mathcal
        I}=\{\sigma\}$. The contribution of this ${\mathcal I}$
      to the right hand side of \eqref{eq:fnxy} is
$$
(-1)^{|{\mathcal I}|-1}\cdot (1+y)^{|\sigma\cap V(\Int(L))|}\cdot
      f_{n-k-1+|\sigma\cap V(\partial(L))|} (x,y)$$
which equals
$$
(-1)^{k+1-|\sigma|}\cdot (1+y)^{|\sigma\cap V(\Int(L))|}\cdot f_{n-k-1+|\sigma\cap V(\partial(L))|}
      (x,y),
$$
since $|{\mathcal I}|-1=k+1-|\sigma|=0$.
\end{proof}

\section{A generating function for the polynomials $f_n(x,y)$}
\label{sec:genf}

The recurrence given in Proposition~\ref{prop:fnxy} allows to write a
generating function formula for the polynomials $f_n(x,y)$. To state it
in a more concise fashion we introduce the {\em magic polynomial
  $r_L(u,v)$ of the simplicial complex $L$}  given by

\begin{equation}
\label{eq:magic}
r_L(u,v)
=\sum_{\sigma \in L\setminus \partial L}
  u^{|\sigma\cap V(\partial L)|} v^{|\sigma\cap V(\Int(L)|} -u^{k+1}.
\end{equation}

\begin{proposition}
\label{prop:fgen}
The generating function $f(x,y,t):=\sum_{n=0}^{\infty} f_n(x,y) t^n$ is
given by
$$
f(x,y,t)=\frac{1}{1-t(x+1)} \left(1- \frac{r_L(-1-x,-1-y)}{r_L(-1/t, -1-y)}\right).
$$
\end{proposition}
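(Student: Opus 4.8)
The plan is to convert the linear recurrence of Proposition~\ref{prop:fnxy} into an equation for the generating function $f(x,y,t)=\sum_{n\ge 0} f_n(x,y)\,t^n$. First I would record the recurrence in a form that isolates the ``lead term'': for $n\ge k+1$, Proposition~\ref{prop:fnxy} gives
$$
f_n(x,y)=(1+y)^{?}\,\big(\text{no such term}\big)\ +\!\!\sum_{\sigma\in L\setminus\partial L}(-1)^{k+1-|\sigma|}(1+y)^{|\sigma\cap V(\Int L)|}f_{\,n-k-1+|\sigma\cap V(\partial L)|}(x,y),
$$
and the unique facet-free contribution with $|\sigma\cap V(\partial L)|=k+1$ is absent (a facet of the boundary is in $\partial L$), so every surviving $\sigma$ has $|\sigma\cap V(\partial L)|\le k$, i.e.\ every index on the right is at most $n-1$; this is what makes the recurrence well-posed. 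Grouping terms by the pair $(a,b)=(|\sigma\cap V(\partial L)|,\,|\sigma\cap V(\Int L)|)$ and comparing with \eqref{eq:magic}, the polynomial $r_L(u,v)+u^{k+1}=\sum_{\sigma\in L\setminus\partial L}u^{a}v^{b}$ packages exactly these coefficients.

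Next I would multiply the recurrence by $t^n$ and sum over $n\ge k+1$. On the left this produces $f(x,y,t)-\sum_{n\le k}(1+x)^n t^n$. On the right, the term indexed by $\sigma$ with $(a,b)$ as above contributes $(-1)^{k+1-a-b}(1+y)^b\sum_{n\ge k+1}f_{n-k-1+a}(x,y)t^n = (-1)^{k+1-a-b}(1+y)^b\,t^{k+1-a}\sum_{m\ge a}f_m(x,y)t^m$. The sign $(-1)^{k+1-|\sigma|}=(-1)^{k+1-a-b}$ combines with $u^a v^b$ evaluated at $u=-1-x$? — more precisely at the substitution that turns $(-1)^{k+1-a-b}(1+y)^b$ and the powers of $t$ into values of $r_L$: writing $t^{k+1-a}=(1/t)^{a-k-1}$ and noting $(-1)^{k+1-a}t^{k+1-a}=(-1/t)^{a}\cdot(-t)^{k+1}\cdot(-1)^{?}$, I would choose the evaluation $u=-1/t$, $v=-1-y$, so that $u^a v^b$ reproduces $(-1)^{k+1-a-b}(1+y)^b$ up to the global factor $u^{k+1}=(-1/t)^{k+1}$; the ``$-u^{k+1}$'' in the definition of $r_L$ is precisely the correction needed because the facet term $a=k+1$ does not occur. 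After the dust settles, the sum $\sum_{m\ge a}f_m t^m$ differs from the full $f(x,y,t)$ only by the finitely many low-order terms $f_0,\dots,f_{k}$, which are $(1+x)^m$; I would have to check that the $r_L$-weighted combination of these low-order tails conspires with the left-hand correction $\sum_{n\le k}(1+x)^nt^n$ to leave a clean expression. The geometric-series sum $\sum_{m\le k}(1+x)^mt^m=\frac{1-(t(1+x))^{k+1}}{1-t(1+x)}$ and the appearance of $u^{k+1}$ terms make this plausible.

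Collecting everything, the equation will take the shape $f(x,y,t)\cdot\big(1 - (\text{something})\big)= (\text{something else})$, where the ``something'' on the left is $-\,r_L(-1/t,-1-y)\cdot t^{-k-1}$ times a normalizing factor, and solving yields
$$
f(x,y,t)=\frac{1}{1-t(x+1)}\left(1-\frac{r_L(-1-x,-1-y)}{r_L(-1/t,-1-y)}\right).
$$
To pin down the constants I would run the sanity check $k=1$, $L$ the two-edge path, where $r_L(u,v)=uv-u^2$ (one interior vertex $\sigma$ with $a=0,b=1$ giving $v$; two edges with $a=b=1$ giving $2uv$; wait — recount: interior vertex contributes $v$, the two half-edges each contribute $uv$, minus $u^2$), and verify the formula reduces to the classical Tchebyshev generating function $\sum T_n((x+1)/2)t^n$ after the appropriate change of variables; matching this known case fixes any sign ambiguity.

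\textbf{Main obstacle.} The routine part is the generating-function manipulation; the delicate part is the bookkeeping of the finitely many boundary terms $f_0,\dots,f_k=(1+x)^0,\dots,(1+x)^k$ together with the ``$-u^{k+1}$'' correction in $r_L$, i.e.\ showing that the defect between $\sum_{m\ge a}f_m t^m$ and $f(x,y,t)$, summed against the $r_L$ weights, recombines exactly with the left-hand polynomial tail to produce the single clean factor $1/(1-t(x+1))$ and the numerator $r_L(-1-x,-1-y)$ (note $r_L(-1-x,-1-y)$ is, up to the $(-1)^{k+1}$ from $u^{k+1}$, just $f_{k+1}$-type data, consistent with $f_n=(1+x)^n$ failing first at $n=k+1$). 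I would organize this by first proving the formula formally as identity of power series in $t$, treating $x,y$ as parameters, and only afterwards noting that both sides are rational in $t$.
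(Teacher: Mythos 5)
Your plan is exactly the paper's proof: multiply the recurrence of Proposition~\ref{prop:fnxy} by $t^n$, sum over $n\geq k+1$, recognize the coefficient of $f(x,y,t)$ on the left as $-(-t)^{k+1}r_L\left(-\tfrac{1}{t},-1-y\right)$ under the substitution $u=-1/t$, $v=-1-y$ (with the $-u^{k+1}$ term of $r_L$ supplying the leading $1$), and solve for $f(x,y,t)$. The boundary-term bookkeeping you flag as the main obstacle is precisely the short computation the paper carries out: for each face one sums the geometric series $\sum_{n=0}^{a-1}(1+x)^nt^n=\frac{1-((1+x)t)^{a}}{1-(1+x)t}$ with $a=|\sigma\cap V(\partial L)|$, and over the common denominator $1-(1+x)t$ these defects, together with $\frac{1-((1+x)t)^{k+1}}{1-(1+x)t}$ from the initial terms, reassemble into $(-t)^{k+1}r_L(-1-x,-1-y)$ in the numerator, giving the stated formula after division; so this is routine algebra rather than a gap.
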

\begin{proof}
Proposition~\ref{prop:fnxy} may be rewritten as
\begin{align*}
f(x,y,t)
&=
\sum_{n=0}^k (1+x)^n\cdot t^n \\
&+\sum_{\sigma \in L\setminus \partial L}
(-1)^{k+1-|\sigma|} (1+y)^{|\sigma\cap V(\Int (L))|}t^{k+1-|\sigma\cap
  V(\partial(L)|} \sum_{n=k+1}^{\infty} f_{n-k-1+|\sigma\cap V(\partial L)|}(x,y)
t^{n-k-1+|\sigma\cap V(\partial L)|}\\
&=\frac{1-(1+x)^{k+1}t^{k+1}}{1-(1+x)t}\\
&+\sum_{\sigma \in L\setminus \partial L}
(-1)^{k+1-|\sigma|} (1+y)^{|\sigma\cap V(\Int (L))|}t^{k+1-|\sigma\cap
  V(\partial(L)|}
\left(f(x,y,t)-\sum_{n=0}^{|\sigma\cap V(\partial L)|-1} (1+x)^nt^n\right).
\end{align*}
After subtracting $\sum_{\sigma \in L\setminus \partial L}
(-1)^{k+1-|\sigma|} (1+y)^{|\sigma\cap V(\Int (L))|}t^{k+1-|\sigma\cap
  V(\partial(L)|}f(x,y,t)$ on both sides, the left hand side becomes
$$
\left(1- \sum_{\sigma \in L\setminus \partial L}
(-1)^{k+1-|\sigma|} (1+y)^{|\sigma\cap V(\Int (L))|}t^{k+1-|\sigma\cap
  V(\partial(L)|}\right)f(x,y,t)
=-(-t)^{k+1} r_L\left(-\frac{1}{t},-1-y\right) f(x,y,t),
$$
and the right hand side becomes
$$
\frac{1-(1+x)^{k+1}t^{k+1}}{1-(1+x)t}-t^{k+1}\sum_{\sigma \in L\setminus
  \partial L}
(-1)^{k+1-|\sigma|} (1+y)^{|\sigma\cap V(\Int (L))|}\left(\frac{1}{t}\right)^{|\sigma\cap
  V(\partial(L)|} \frac{1-((1+x)t)^{|\sigma\cap V(\partial L)|}}{1-(1+x)t}
$$
which is easily seen to be equal to
$$
\frac{-(-t)^{k+1} r_L(-1/t,-1-y)+(-t)^{k+1}r_L(-1-x,-1-y)}{1-(1+x)t}
$$
Dividing both sides by $-(-t)^{k+1} r_L(-1/t,-1-y)$ yields the stated equality.
\end{proof}

Let $\If_n(x,y)$ denote the contribution to $f(K;x,y)$ of adding a
single facet $\sigma$ of dimension $(n-1)$.
Note that indeed the complexes $K$ and $K\cup\{\sigma\}$ satisfy that the polynomial $f(K\cup\{\sigma\};x,y)-f(K;x,y)$ depends only on $\dim \sigma$ (and $L$), and not on $K$.
Knowing $\If_n(x,y)$ allows
to express $f(K;x,y)$ directly since we have
\begin{equation}
\label{eq:if}
f(K;x,y)=\sum_{j=0}^{\dim (K)+1} f_{j-1}(K)\If_j (x,y).
\end{equation}
Applying \eqref{eq:if} to the case when $K$ is the
$(n-1)$-dimensional simplex yields
\begin{equation}
\label{eq:fIf}
f_n(x,y)=\sum_{j=0}^{n} \binom{n}{j}\If_j (x,y).
\end{equation}
As an immediate consequence we obtain the generating
function identity
$$
\sum_{n=0}^{\infty} f_n(x,y)t^n=\sum_{j=0}^{\infty} \If _j(x,y)
t^j\sum_{k=0}^{\infty} \binom{k+j}{j} t^k=\sum_{j=0}^{\infty} \If _j(x,y)
\frac{t^j}{(1-t)^{j+1}}.
$$
Substituting $t:=u/(1+u)$ in the previous equation and rearranging
yields
$$
\sum_{j=0}^{\infty} \If _j(x,y) u^j=\frac{1}{1+u}\sum_{n=0}^{\infty}
f_n(x,y) \left(\frac{u}{1+u}\right)^n=\frac{1}{1+u}
f\left(x,y,\frac{u}{1+u}\right).
$$
This equation and Proposition~\ref{prop:fgen} have the following consequence.

\begin{corollary}
\label{cor:ifgen}
The generating function $\If(x,y,t):=\sum_{n=0}^{\infty}\If_n(x,y) t^n$
is given by
$$
\If(x,y,t)=
\frac{1}{1-tx}\left(1-\frac{r_L(-1-x,-1-y)}{r_L\left(\frac{-1-t}{t},-1-y\right)}\right).
$$
\end{corollary}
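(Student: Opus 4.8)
The plan is to read off the claimed formula directly from the two identities already at hand: the $u$-version of the generating function for the $\If_n$ established just before the statement,
$$
\sum_{j=0}^{\infty}\If_j(x,y)\,u^j=\frac{1}{1+u}\,f\!\left(x,y,\frac{u}{1+u}\right),
$$
and the closed form for $f(x,y,t)$ provided by Proposition~\ref{prop:fgen}. First I would note that the substitution $t=u/(1+u)$ is legitimate in the ring of formal power series $\Rrr[x,y][[u]]$, since $u/(1+u)$ has zero constant term; and that the rational expression $r_L(-1/t,-1-y)$ appearing in Proposition~\ref{prop:fgen} is to be read through the normalization $-(-t)^{k+1}r_L(-1/t,-1-y)$ used in its proof, so that the quotient in the formula for $f$ is a genuine power series and the substitution is an identity of power series, not merely of rational functions.

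Next I would carry out the substitution. Writing $t=u/(1+u)$, the two relevant ingredients simplify to
$$
1-(x+1)t=1-\frac{(x+1)u}{1+u}=\frac{1-ux}{1+u},\qquad
-\frac{1}{t}=-\frac{1+u}{u}=\frac{-1-u}{u},
$$
so Proposition~\ref{prop:fgen} gives
$$
f\!\left(x,y,\frac{u}{1+u}\right)=\frac{1+u}{1-ux}\left(1-\frac{r_L(-1-x,-1-y)}{r_L\!\left(\frac{-1-u}{u},-1-y\right)}\right).
$$
Multiplying by $1/(1+u)$, the factor $1+u$ cancels, and after renaming the variable $u$ back to $t$ one obtains exactly
$$
\If(x,y,t)=\frac{1}{1-tx}\left(1-\frac{r_L(-1-x,-1-y)}{r_L\!\left(\frac{-1-t}{t},-1-y\right)}\right),
$$
which is the asserted identity.

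Since both inputs are already proved, there is no genuine obstacle here; the only point demanding care is the bookkeeping in the middle paragraph, namely tracking the interpretation of $r_L$ evaluated at an argument with a pole at $t=0$ (equivalently $u=0$) so that every manipulation stays within $\Rrr[x,y][[t]]$. This is handled precisely as in the proof of Proposition~\ref{prop:fgen}, where each occurrence of $r_L(-1/t,\cdot)$ enters multiplied by $(-t)^{k+1}$, clearing the apparent pole.
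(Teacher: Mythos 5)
Your proposal is correct and follows the same route the paper intends: the paper derives the identity $\sum_j \If_j(x,y)u^j=\frac{1}{1+u}f\bigl(x,y,\frac{u}{1+u}\bigr)$ and then states the corollary as an immediate consequence of Proposition~\ref{prop:fgen}, exactly the substitution and simplification you carry out explicitly. Your extra remark about interpreting $r_L(-1/t,\cdot)$ via the $(-t)^{k+1}$ normalization is a harmless refinement of the same argument.
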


\section{Generalized Tchebyshev polynomials of the first kind}
\label{sec:TL}

Following~\cite{Hetyei-Tch} we define the {\em $F$-polynomial} of a simplicial
complex $K$ as the polynomial
$$
F(K,x):=\sum_{j=0}^{\dim K+1} f_{j-1}(K)\left(\frac{x-1}{2}\right)^j.
$$
Let $L$ be the simplicial complex considered in Example~\ref{ex:Tch}.
As an immediate generalization of~\cite[Proposition 3.3]{Hetyei-Tch},
 Theorem \ref{thm:samefxy} gives the following.
\begin{proposition}
\label{prop:T1}
Let $L$ be the path with two edges.
Let $K$ be any simplicial complex and $K'$ be any generalized Tchebyshev triangulation of $K$ induced by $L$.
Let $T_n(x)$ be the $n$-th Tchebyshev polynomial
of the first kind.
Then the linear map $T:{\mathbb R}[x]\rightarrow
{\mathbb R}[x]$ given by $T(x^n):=T_n(x)$ satisfies
$$
T(F(K,x))=F(K',x).
$$
\end{proposition}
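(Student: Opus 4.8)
The plan is to reduce the stated operator identity to a single family of polynomial identities for the universal polynomials $f_n(x,y)$ of Proposition~\ref{prop:fnxy}, and then to establish that family by comparing generating functions in an auxiliary variable $t$.

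First I would note that setting the two variables of the flag $f$-polynomial of $(K',c)$ equal collapses it to the ordinary $f$-polynomial, so that $F(K',x)$ is the specialization of $f_c(K';x,y)$ at $x=y=\tfrac{x-1}{2}$. By Theorem~\ref{thm:samefxy} this is independent of the chosen generalized Tchebyshev triangulation $K'$ and depends on $K$ only through its $f$-vector, in a linear way; the assignment $K\mapsto T(F(K,x))$ is plainly linear in the $f$-vector as well. Since every $f$-vector is an integer linear combination of the $f$-vectors of the simplices $\Delta^{n-1}$ ($n\ge 0$), it suffices to prove $F(K',x)=T(F(K,x))$ in the case $K=\Delta^{n-1}$.

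For $K=\Delta^{n-1}$ one has $F(\Delta^{n-1},x)=\sum_{j=0}^{n}\binom nj\bigl(\tfrac{x-1}{2}\bigr)^{j}=\bigl(\tfrac{x+1}{2}\bigr)^{n}$, while $F(K',x)=f_n\bigl(\tfrac{x-1}{2},\tfrac{x-1}{2}\bigr)$ in the notation of Proposition~\ref{prop:fnxy}; so the proposition reduces to the identities $f_n\bigl(\tfrac{x-1}{2},\tfrac{x-1}{2}\bigr)=T\bigl(\bigl(\tfrac{x+1}{2}\bigr)^{n}\bigr)$ for all $n\ge 0$. Specializing Proposition~\ref{prop:fnxy} to $k=1$ with $L$ the path with two edges gives $f_0(x,y)=1$, $f_1(x,y)=1+x$, and $f_n(x,y)=2(1+y)f_{n-1}(x,y)-(1+y)f_{n-2}(x,y)$ for $n\ge 2$. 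Writing $g_n(x):=f_n\bigl(\tfrac{x-1}{2},\tfrac{x-1}{2}\bigr)$ and using $1+\tfrac{x-1}{2}=\tfrac{x+1}{2}$, one obtains $g_0=1$, $g_1=\tfrac{x+1}{2}$ and $g_n=(x+1)g_{n-1}-\tfrac{x+1}{2}g_{n-2}$ for $n\ge 2$, and summing this linear recurrence yields $\sum_{n\ge 0}g_n(x)\,t^n=\dfrac{2-(x+1)t}{2-(x+1)t(2-t)}$. On the other hand, applying $T$ coefficientwise to $\tfrac{1}{1-t(x+1)/2}=\sum_{n\ge 0}\bigl(\tfrac{x+1}{2}\bigr)^{n}t^{n}$ and regrouping by powers of $x$ gives $\sum_{n\ge 0}T\bigl(\bigl(\tfrac{x+1}{2}\bigr)^{n}\bigr)t^{n}=\dfrac{2}{2-t}\sum_{m\ge 0}T_m(x)\bigl(\tfrac{t}{2-t}\bigr)^{m}$; substituting $s=\tfrac{t}{2-t}$ into the classical identity $\sum_{m\ge 0}T_m(x)s^{m}=\dfrac{1-sx}{1-2sx+s^{2}}$ (see~\cite{Abramowitz-Stegun}) and simplifying produces the same rational function $\dfrac{2-(x+1)t}{2-(x+1)t(2-t)}$. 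Comparing coefficients of $t^n$ now gives the required identities, hence the proposition.

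I do not anticipate a genuine obstacle: the two substantive ingredients are the reduction to simplices, which is immediate from Theorem~\ref{thm:samefxy}, and the matching of the two rational generating functions, which is a short mechanical computation once the standard generating function of the $T_m$ is recalled. The only place needing some care is keeping track of the substitution $z=\tfrac{x-1}{2}$ throughout; the generating-function manipulations themselves are unproblematic, since every coefficient of $t$ is a genuine polynomial to which $T$ applies. As an alternative to the second ingredient, the simplex case is exactly \cite[Proposition~3.3]{Hetyei-Tch} applied to any single generalized Tchebyshev triangulation of the simplex, so once Theorem~\ref{thm:samefxy} has removed the dependence on the ordering one may instead simply invoke that reference.
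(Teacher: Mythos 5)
Your argument is correct, but it takes a more self-contained route than the paper does. The paper disposes of Proposition~\ref{prop:T1} in one line: Theorem~\ref{thm:samefxy} shows that all generalized Tchebyshev triangulations induced by the two-edge path have the same $F$-polynomial, and the identity $T(F(K,x))=F(K',x)$ for one particular (order-constrained) such triangulation is exactly \cite[Proposition 3.3]{Hetyei-Tch}; this is the alternative you mention at the end of your write-up. Your main line of proof instead re-derives that ingredient: the reduction to simplices via linearity in the $f$-vector is sound (and is precisely how the paper later computes $T^L$, cf.\ equation (\ref{eq:TL1a})), the specialization of Proposition~\ref{prop:fnxy} to the path gives the recurrence $f_n=2(1+y)f_{n-1}-(1+y)f_{n-2}$ as you state, and your generating-function match $\sum_n g_n(x)t^n=\frac{2-(x+1)t}{2-2(x+1)t+(x+1)t^2}=\frac{2}{2-t}\sum_m T_m(x)\left(\frac{t}{2-t}\right)^m$ checks out; it is in substance the $s=1$ case of the Section~\ref{sec:rr} computation, where the recurrence degenerates to $T^L_n=2xT^L_{n-1}-T^L_{n-2}$ with $T^L_0=1$, $T^L_1=x$, i.e.\ the Tchebyshev recurrence. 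What your route buys is independence from the earlier paper, at the cost only of invoking the classical generating function of the $T_m$; what the paper's route buys is brevity, since all the combinatorial work is already packaged in Theorem~\ref{thm:samefxy} and the cited result.
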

Inspired by Proposition~\ref{prop:T1} we make the following definition.
\begin{definition}
\label{def:T1}
We define the {\em generalized
  Tchebyshev polynomial $T^L_n(x)$ of the first kind} as the image of
$x^n$ under the unique linear map $T^L:{\mathbb R}[x]\rightarrow {\mathbb
  R}[x]$ that has the following property: given any simplicial complex
$K$ and any
generalized Tchebyshev triangulation $K'$ of $K$, induced by $L$, we
have
\begin{equation}
\label{eq:T1}
T^L(F(K,x))=F(K',x).
\end{equation}
\end{definition}
The linear map $T^L$ in Definition~\ref{def:T1} above is well-defined:
let $T_1:\mathbb{R}[x]\rightarrow \mathbb{R}[x]$ be the invertible linear map satisfying $T_1(f(K,x))=F(K,x)$ for all simplicial complexes $K$, and
$T_2:\mathbb{R}[x]\rightarrow \mathbb{R}[x]$ be the linear map from Theorem~\ref{thm:samefxy} satisfying $T_2(f(K,x))=f(K',x)$ (plugging $y=x$). Then $T^L=T_1T_2T_1^{-1}$.
We now compute $T^L_n(x)$ explicitly.
When $K$ is an $(n-1)$-dimensional simplex,
we have
$$
F(K,x)=\sum_{j=0}^n \binom{n}{j} \left(\frac{x-1}{2}\right)^j=
\left(\frac{x+1}{2}\right)^n\quad\mbox{and}\quad
F(K',x)=f_n\left(\frac{x-1}{2},\frac{x-1}{2}\right).
$$
As a consequence $T^L$ is given by
\begin{equation}
\label{eq:TL1a}
T^L\left(\left(\frac{x+1}{2}\right)^n\right)
=f_n\left(\frac{x-1}{2},\frac{x-1}{2}\right).
\end{equation}
Since
$$
x^n=\left(2\cdot \frac{x+1}{2}-1\right)^n
=\sum_{k=0}^n \binom{n}{k}(-1)^{n-k} 2^k \left(\frac{x+1}{2}\right)^k,
$$
 equation (\ref{eq:TL1a}) is equivalent to
\begin{equation}
\label{eq:TL1}
T^L_n(x)=T^L(x^n)=
\sum_{k=0}^n \binom{n}{k}(-1)^{n-k} 2^k
f_k\left(\frac{x-1}{2},\frac{x-1}{2}\right).
\end{equation}
Using \eqref{eq:TL1} we obtain the following generating function formula for the
polynomials $T^L_n(x)$.
\begin{align*}
\sum_{n=0}^{\infty} T^L_n(x) t^n
&=\sum_{k=0}^{\infty}2^k f_k\left(\frac{x-1}{2},\frac{x-1}{2}\right)
 t^k\sum_{m=0}^{\infty}\binom{m+k}{k} (-t)^m\\
&=\sum_{k=0}^{\infty} f_k\left(\frac{x-1}{2},\frac{x-1}{2}\right)
\cdot \frac{(2t)^k}{(1+t)^{k+1}}, \quad\mbox{i.e.,}
\end{align*}
\begin{equation}
\label{eq:Tf}
\sum_{n=0}^{\infty} T^L_n(x) t^n
=
\frac{1}{1+t}
f\left(\frac{x-1}{2},\frac{x-1}{2},\frac{2t}{1+t}\right).
\end{equation}
This equation and Proposition~\ref{prop:fgen} yield
\begin{equation}
\label{eq:TLgen}
\sum_{n=0}^{\infty} T^L_n(x) t^n=
\frac{1}{1-xt} \cdot
\frac{r_L\left(-\frac{1+t}{2t}, -\frac{1+x}{2}\right)-r_L\left(-\frac{1+x}{2},-\frac{1+x}{2}\right)}{r_L\left(-\frac{1+t}{2t}, -\frac{1+x}{2}\right)}.
\end{equation}
Combining Equation \eqref{eq:TL1} with (\ref{eq:fIf}) yields
\begin{align*}
T^L_n(x)&=\sum_{k=0}^n \binom{n}{k}(-1)^{n-k} 2^k
\sum_{j=0}^k \binom{k}{j}\If_j\left(\frac{x-1}{2},\frac{x-1}{2}\right)\\
&=\sum_{j=0}^n
\If_j\left(\frac{x-1}{2},\frac{x-1}{2}\right)2^j\binom{n}{j}
\sum_{k=j}^n \binom{n-j}{k-j} (-1)^{n-k} 2^{k-j}.\\
\end{align*}
The inside sum is $(2-1)^{n-j}=1$ and we obtain
\begin{equation}
T^L_n(x)=\sum_{j=0}^n
\If_j\left(\frac{x-1}{2},\frac{x-1}{2}\right)2^j\binom{n}{j},
\end{equation}
where $2^j\binom{n}{j}$ is the number of $(j-1)$-dimensional faces of an
$n$-dimensional cross-polytope. Thus by (\ref{eq:if}) we observe that:
\begin{corollary}
\label{cor:Tcross}
$T^L_n(x)$ is the $F$-polynomial of the generalized Tchebyshev
  triangulation of the boundary complex of an $n$-dimensional
  cross-polytope, induced by $L$.
\end{corollary}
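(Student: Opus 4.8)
The plan is to sidestep the subdivision machinery entirely and reduce to the defining property \eqref{eq:T1} of $T^L$, by computing the $F$-polynomial of the boundary complex of the cross-polytope. Write $B_n$ for that complex: it is the $(n-1)$-dimensional simplicial complex on the $2n$ vertices $\pm e_1,\dots,\pm e_n$ whose $(j-1)$-faces are obtained by choosing $j$ of the $n$ coordinate axes and one of the two signs on each, so that $f_{j-1}(B_n)=2^j\binom{n}{j}$ for $0\le j\le n$. Substituting this into the definition of the $F$-polynomial, I would get
$$
F(B_n,x)=\sum_{j=0}^n 2^j\binom{n}{j}\left(\frac{x-1}{2}\right)^j
=\sum_{j=0}^n\binom{n}{j}(x-1)^j=x^n .
$$

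Next, let $K'$ be any generalized Tchebyshev triangulation of $B_n$ induced by $L$. By Theorem~\ref{thm:samef} the $f$-vector of $K'$, and hence $F(K',x)$, is independent of which such triangulation we pick, so the phrase ``the $F$-polynomial of the generalized Tchebyshev triangulation of $B_n$'' is unambiguous. Applying \eqref{eq:T1} with $K=B_n$ then gives
$$
F(K',x)=T^L\bigl(F(B_n,x)\bigr)=T^L(x^n)=T^L_n(x),
$$
which is exactly the assertion.

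A second, essentially equivalent route, closer to the computations just performed, bypasses Definition~\ref{def:T1}: combine the displayed identity $T^L_n(x)=\sum_{j=0}^n \If_j\bigl(\tfrac{x-1}{2},\tfrac{x-1}{2}\bigr)2^j\binom{n}{j}$ with \eqref{eq:if} applied to $K=B_n$. Since $f_{j-1}(B_n)=2^j\binom{n}{j}$, \eqref{eq:if} reads $f(B_n;x,y)=\sum_j 2^j\binom{n}{j}\,\If_j(x,y)$; specializing $x=y=\tfrac{x-1}{2}$ turns the right-hand side into $T^L_n(x)$, while the left-hand side equals $f_c(K';\tfrac{x-1}{2},\tfrac{x-1}{2})=f\bigl(K',\tfrac{x-1}{2}\bigr)=F(K',x)$ by the definitions in Theorem~\ref{thm:samefxy}.

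I do not anticipate a genuine obstacle here. The only points requiring care are the standard face enumeration of $B_n$, the bookkeeping of the substitution $x\mapsto(x-1)/2$ relating the $f$- and $F$-polynomials, and the appeal to Theorem~\ref{thm:samef} that makes ``the'' generalized Tchebyshev triangulation of $B_n$ well-posed at the level of $F$-polynomials.
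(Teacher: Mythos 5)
Your proposal is correct, and your second route is precisely the paper's own proof: the paper combines \eqref{eq:TL1} with \eqref{eq:fIf} to get $T^L_n(x)=\sum_{j=0}^n \If_j\bigl(\tfrac{x-1}{2},\tfrac{x-1}{2}\bigr)2^j\binom{n}{j}$ and then reads off the corollary from \eqref{eq:if}, using exactly the face count $f_{j-1}=2^j\binom{n}{j}$ of the boundary of the cross-polytope. Your primary route, however, is genuinely more direct and bypasses the $\If_j$ bookkeeping altogether: you observe that $F(B_n,x)=\sum_j 2^j\binom{n}{j}\bigl(\tfrac{x-1}{2}\bigr)^j=x^n$, note that Theorem~\ref{thm:samef} makes ``the'' $F$-polynomial of a generalized Tchebyshev triangulation of $B_n$ well-posed, and then apply the defining identity \eqref{eq:T1} with $K=B_n$ to get $F(K',x)=T^L(x^n)=T^L_n(x)$. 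This is legitimate because Definition~\ref{def:T1} (whose well-definedness the paper establishes via $T^L=T_1T_2T_1^{-1}$) asserts \eqref{eq:T1} for \emph{every} complex $K$, not just simplices. What the paper's route buys is that it passes through formulas it needs anyway (the expansion in the $\If_j$'s and the ensuing generating functions), whereas your route shows the corollary is an immediate consequence of the definition once one notices that the cross-polytope boundary plays for $x^n$ the role the simplex plays for $\bigl(\tfrac{x+1}{2}\bigr)^n$ in \eqref{eq:TL1a}. Both arguments are sound; no gaps.
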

Corollary~\ref{cor:Tcross} allows us to prove several properties of the
generalized Tchebyshev polynomials of the first kind.
\begin{theorem}
\label{thm:gtch1}
For all $n\geq 0$, the polynomials $T^L_n(x)$ have the following properties:
\begin{enumerate}
\item $T^L_n(x)$ is a polynomial of degree $n$;
\item $T^L_n(1)=1$;
\item $(-1)^n T^L_n(-x)=T^L_n(x)$;
\item all real roots of $T^L_n(x)$ belong to the interval $(-1,1)$.
\end{enumerate}
\end{theorem}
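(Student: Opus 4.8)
The plan is to deduce all four statements from Corollary~\ref{cor:Tcross}. Let $C$ denote a generalized Tchebyshev triangulation, induced by $L$, of the boundary complex of the $n$-dimensional cross-polytope, so that $T^L_n(x)=F(C,x)$. Since every step in Definition~\ref{def:gtch} replaces the closed star of a $k$-face by a subdivision of that star, the underlying space $||C||$ is the sphere $S^{n-1}$; consequently $C$ is an $(n-1)$-dimensional simplicial complex with $f_{n-1}(C)>0$, it is Eulerian, and it is Cohen--Macaulay over $\mathbb{Q}$, so $h_i(C)\geq 0$ for all $i$ and $h_0(C)=1$. I will also use the elementary identity $F(K,x)=\left(\frac{x+1}{2}\right)^{\dim K+1}h\left(K,\frac{x-1}{x+1}\right)$, obtained from $\sum_i f_{i-1}(K)t^i=(1+t)^{\dim K+1}h(K,t/(1+t))$ by substituting $t=(x-1)/2$, together with the trivial evaluation $F(K,1)=f_{-1}(K)=1$. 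Statements (1) and (2) are then immediate: $F(C,x)$ has degree $\dim C+1=n$ with positive leading coefficient $f_{n-1}(C)/2^n$, and $T^L_n(1)=F(C,1)=1$.

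For statement (3), I invoke the Dehn--Sommerville relations for the triangulated sphere $C$, which give $h(C,t)=t^n h(C,1/t)$. Set $s=(x-1)/(x+1)$, so that $(-x-1)/(-x+1)=1/s$. Evaluating the $F$-versus-$h$ identity at $-x$ and using $h(C,1/s)=s^{-n}h(C,s)$ and $\left(\frac{1-x}{2}\right)^n s^{-n}=(-1)^n\left(\frac{x+1}{2}\right)^n$ yields
$$T^L_n(-x)=\left(\frac{1-x}{2}\right)^n h(C,1/s)=(-1)^n\left(\frac{x+1}{2}\right)^n h(C,s)=(-1)^n T^L_n(x),$$
an identity of rational functions whose two sides are polynomials, hence valid everywhere; multiplying by $(-1)^n$ gives $(-1)^n T^L_n(-x)=T^L_n(x)$.

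For statement (4), Cohen--Macaulayness gives $h(C,s)=\sum_i h_i(C)s^i\geq h_0(C)=1>0$ for every real $s\geq 0$. Hence if $x\geq 1$ then $s=(x-1)/(x+1)\geq 0$ and $(x+1)/2\geq 1>0$, so $T^L_n(x)=\left(\frac{x+1}{2}\right)^n h(C,s)>0$; in particular $T^L_n$ has no real root in $[1,\infty)$. By the symmetry just proved, $T^L_n$ has no real root in $(-\infty,-1]$ either (indeed $T^L_n(-1)=(-1)^n\neq 0$), so all real roots lie in $(-1,1)$. I expect the computations (the $F$-versus-$h$ identity and the algebra in (3)) to be entirely routine; the one point that must be handled with care is the structural input that $C$ is a Cohen--Macaulay, Eulerian triangulation of $S^{n-1}$, which rests on the fact that a generalized Tchebyshev triangulation does not change the underlying topological space, together with the classical facts that triangulated spheres are Eulerian and Cohen--Macaulay.
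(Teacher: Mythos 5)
Your proposal is correct and follows essentially the same route as the paper: reduce via Corollary~\ref{cor:Tcross} to the $F$-polynomial of a triangulated sphere, rewrite it in terms of the $h$-vector (your identity $F(C,x)=\left(\frac{x+1}{2}\right)^n h\!\left(C,\frac{x-1}{x+1}\right)$ is exactly the paper's expansion $T^L_n(x)=\frac{1}{2^n}\sum_i h_i(x-1)^i(x+1)^{n-i}$), and then use Dehn--Sommerville for the symmetry and nonnegativity of the $h$-vector of a simplicial sphere for the location of the real roots. The only cosmetic differences are that you invoke Cohen--Macaulayness explicitly for $h_i\geq 0$ and phrase the algebra through the generating-function substitution rather than the binomial theorem.
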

\begin{proof}
Let $(f_{-1},\ldots,f_{n-1})$, respectively $(h_0,\ldots,h_n)$ be the
$f$-vector and $h$-vector, respectively, of the generalized Tchebyshev
  triangulation of the boundary complex of an $n$-dimensional
  cross-polytope, induced by $L$. By Corollary~\ref{cor:Tcross} we have
$$
T^L_n(x)=\sum_{j=0}^n f_{j-1} \left(\frac{x-1}{2}\right)^j.
$$
Clearly $T^L_n(x)$ has degree $n$. Substituting $f_{j-1}=\sum_{i=0}^j
\binom{n-i}{n-j} h_i$ for each $j$, the previous equation may be rewritten as
$$
T^L_n(x)
=\sum_{j=0}^n \left(\frac{x-1}{2}\right)^j \sum_{i=0}^j \binom{n-i}{n-j} h_i
=\sum_{i=0}^n h_i \sum_{j=i}^n \binom{n-i}{n-j} \left(\frac{x-1}{2}\right)^j.
$$
By the binomial theorem we obtain
\begin{equation}
\label{eq:TLh}
T^L_n(x)=\frac{1}{2^n}\sum_{i=0}^n h_i (x-1)^{i}(x+1)^{n-i}.
\end{equation}
Substituting $x=1$ into (\ref{eq:TLh}) yields $T^L_n(1)=h_0=1$.
The third statement follows from the Dehn-Sommerville equations
$h_i=h_{n-i}$.

As a consequence, the set of real zeros of $T^L_n(x)$ is
symmetric to the origin. Thus, to prove the last statement, we only need
to show that $T^L_n(x)$ has no real zero that is larger than $1$. This
is an immediate consequence of (\ref{eq:TLh}) and the fact that the
$h$-vector of a simplicial sphere has only nonnegative entries, with
$h_0=h_n=1$ being strictly positive.
\end{proof}
We remark that the above proof shows that the statements in Theorem \ref{thm:gtch1} are valid for the $F$-polynomial of any homology sphere.

We conclude this section with a recursive description of the polynomials
$T^L_n(x)$.
\begin{theorem}
\label{thm:TLrec}
The polynomials  $T^L_n(x)$ satisfy $T^L_n(x)=x^n$ for $n\leq k$. For
all $n\geq k+1$, the polynomial $T^L_n(x)$ satisfies a recurrence of the
form
$$
T^L_n(x)=\sum_{j=1}^{k+1} p^L_j(x) T^L_{n-j}(x).
$$
Here each $p^L_j(x)$ is a polynomial of $x$ and it equals to the
coefficient of $t^j$ in $(-2t)^{k+1}r_L\left(-\frac{1+t}{2t},
-\frac{1+x}{2}\right)$.
\end{theorem}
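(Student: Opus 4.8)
The plan is to read off the recurrence from the generating function identity \eqref{eq:TLgen}. Throughout, write $v=-\tfrac{1+x}{2}$, and set
$$
q^L(x,t):=(-2t)^{k+1}\,r_L\!\left(-\tfrac{1+t}{2t},\,v\right),\qquad
c^L(x):=(-2)^{k+1}\,r_L(v,v).
$$
(We may assume $L$ has at least one interior vertex; otherwise $T^L$ is the identity map, $T^L_n(x)=x^n$ for all $n$, and there is nothing to prove.) First I would check that $q^L(x,t)$ is a genuine polynomial in $t$ of degree at most $k+1$ whose $t^0$-coefficient is $-1$. Expanding $r_L$ by \eqref{eq:magic}, a face $\sigma\in L\setminus\partial L$ contributes $(-2t)^{k+1}\bigl(-\tfrac{1+t}{2t}\bigr)^{a}v^{b}=(-1)^{k+1-a}2^{k+1-a}\,t^{k+1-a}(1+t)^{a}v^{b}$, where $a=|\sigma\cap V(\partial L)|$ and $b=|\sigma\cap V(\Int(L))|$, while the term $-u^{k+1}$ contributes $-(1+t)^{k+1}$. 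Since $\dim\sigma\le k$ and $\sigma$ has a vertex outside $V(\partial L)$, we have $a\le k$, so $t^{k+1-a}(1+t)^a v^b$ is divisible by $t$; hence $[t^0]\,q^L(x,t)=-1$. In particular $p^L_j(x):=[t^j]\,q^L(x,t)\in\mathbb{R}[x]$ for each $j$ and $p^L_0(x)=-1$, as required. Now multiply the numerator and denominator of the fraction in \eqref{eq:TLgen} by $(-2t)^{k+1}$ (this clears denominators, since $(-2t)^{k+1}r_L(v,v)=c^L(x)t^{k+1}$) and cross-multiply, to obtain the identity
$$
(1-xt)\,q^L(x,t)\sum_{n\ge 0}T^L_n(x)\,t^n \;=\; q^L(x,t)-c^L(x)\,t^{k+1}
$$
in $\mathbb{R}[x][[t]]$.

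The key step, and the one that brings the order of the recurrence down from $k+2$ to $k+1$, is to show that $1-xt$ divides $q^L(x,t)-c^L(x)t^{k+1}$. For this I would substitute $t=1/x$: using $-\tfrac{1+1/x}{2/x}=-\tfrac{1+x}{2}=v$ one gets $q^L(x,1/x)=(-2/x)^{k+1}r_L(v,v)=c^L(x)/x^{k+1}$, so $q^L(x,t)-c^L(x)t^{k+1}$ vanishes at $t=1/x$ and is therefore divisible by $1-xt$. Consequently
$$
B(x,t):=\frac{q^L(x,t)-c^L(x)\,t^{k+1}}{1-xt}
$$
is a polynomial in $t$ of degree at most $k$, and dividing the displayed identity by $1-xt$ gives $q^L(x,t)\sum_{n\ge 0}T^L_n(x)\,t^n=B(x,t)$.

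To finish, I would compare coefficients of $t^n$ on both sides of $q^L(x,t)\sum_n T^L_n(x)t^n=B(x,t)$. For $n\ge k+1$ the coefficient of $t^n$ on the right is $0$ (as $\deg_t B\le k$), while on the left it equals $\sum_{j=0}^{k+1}p^L_j(x)\,T^L_{n-j}(x)$; isolating the $j=0$ summand and using $p^L_0(x)=-1$ yields $T^L_n(x)=\sum_{j=1}^{k+1}p^L_j(x)\,T^L_{n-j}(x)$, the asserted recurrence. For $n\le k$, the identity $T^L_n(x)=x^n$ is immediate from \eqref{eq:TL1} together with $f_i(x,y)=(1+x)^i$ for $i\le n\le k$ (Proposition~\ref{prop:fnxy}): substituting gives $T^L_n(x)=\sum_{i=0}^n\binom{n}{i}(-1)^{n-i}(x+1)^i=\bigl((x+1)-1\bigr)^n=x^n$.

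I expect the divisibility statement of the second paragraph to be the main obstacle; the rest is bookkeeping with the magic polynomial $r_L$. It is also worth emphasizing that the very fact that $q^L$ is a polynomial in $t$ (and not merely a Laurent polynomial) rests on the bound $a\le k$, which is exactly where the hypothesis that $L$ adds no new vertices on the boundary enters.
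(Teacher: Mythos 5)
Your proof is correct and takes essentially the same route as the paper's: clear denominators in \eqref{eq:TLgen} by $(-2t)^{k+1}$, note the resulting numerator vanishes at $t=1/x$ so the factor $(1-xt)$ cancels, leaving a $t$-polynomial of degree at most $k$, and then compare coefficients of $t^n$ using that the $t^0$-coefficient of $(-2t)^{k+1}r_L\left(-\frac{1+t}{2t},-\frac{1+x}{2}\right)$ is $-1$. Your explicit verification that $a\le k$ for every $\sigma\in L\setminus\partial L$ (which is what makes this expression a polynomial in $t$ with constant term $-1$, and which requires $L$ to have at least one interior vertex, as the paper implicitly assumes) is just a more detailed version of the paper's remark that the total degree of $r_L$ is at most $k+1$ and that $t^{k+1}$ arises only from the term $-u^{k+1}$.
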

\begin{proof}
For $n\leq k$, the generalized Tchebyshev triangulation of the boundary
complex of an $n$-dimensional cross-polytope is the boundary complex
itself whose $F$-polynomial is $x^n$.

To prove the second part of the statement, let us rewrite
(\ref{eq:TLgen}) as
$$
\sum_{n=0}^{\infty} T^L_n(x) t^n=
\frac{1}{1-xt} \cdot
\frac{(-2t)^{k+1}r_L\left(-\frac{1+t}{2t},
  -\frac{1+x}{2}\right)-
  (-2t)^{k+1}r_L\left(-\frac{1+x}{2},-\frac{1+x}{2}\right)}{(-2t)^{k+1}r_L\left(-\frac{1+t}{2t},
  -\frac{1+x}{2}\right)}.
$$
Since the total degree in $u$ and $v$ of each term of $r_L(u,v)$ is at
most $k+1$, the denominator and the numerator of the second factor on
the right hand side are polynomials of $x$ and $t$. Substituting $x=1/t$
into the numerator on the right hand side makes it vanish. As a
consequence, we may always simplify by $(1-tx)$ on the right hand side,
yielding a numerator of degree at most $k$ in $t$. The degree
of the denominator $(-2t)^{k+1}r_L\left(-\frac{1+t}{2t},
-\frac{1+x}{2}\right)$, as a polynomial of $t$ is exactly $k+1$, and the
coefficient of $t^{0}$ is $(-1)$ since $t^{k+1}$ comes only from the
term $-u^{k+1}$ of $r_L(u,v)$. Multiplying both sides with the
denominator on the right hand side and comparing coefficients of $t^n$
on both sides yields a recurrence of the stated form.
\end{proof}
\begin{remark}
\label{rem:notorth}
Theorem~\ref{thm:TLrec} implies that $\{T^L_n(x)\}_{n\geq 0}$ is not a sequence
of orthogonal polynomials if the dimension of $L$ is more than
one. Indeed, every sequence $\{P_n(x)\}_{n\geq 0}$ of monic orthogonal
polynomials satisfies a recurrence of the form
$$P_n(x)=(x-c_n)P_{n-1}(x)-\lambda_n P_{n-2}(x)
\quad\mbox{for $n\geq 1$},$$
where $P_{-1}(x)=0$, $P_0(x)=1$, the numbers $c_n$ and
$\lambda_n$ are constants, $\lambda_n\neq 0$ for $n\geq 2$, and
$\lambda_1$ is arbitrary (see \cite[Ch. I, Theorem 4.1]{Chihara}).
If the dimension of $L$ is greater than one, we have $T^L_n(x)=x^n$ for
$n\leq 2$, forcing $c_1=0$, $c_2=0$ and $\lambda_2=0$; in contradiction
with the requirement of $\lambda_n\neq 0$ for $n\geq 2$.
\end{remark}
Theorem~\ref{thm:TLrec} may be used to find an explicit formula for
$T^L_n(x)$, whenever the characteristic equation associated to the
linear recurrence can be solved. Note that, by Theorem~\ref{thm:TLrec},
this characteristic equation is obtained by replacing each $t^j$ by
$q^{k+1-j}$ in $(-2t)^{k+1}r_L(-(1+t)/2t,-(1+x)/2)$ and finding the
zeros of the resulting polynomial of $q$. This transformation is the
same as substituting $t=1/q$ and multiplying by $q^{k+1}$, thus the
characteristic equation of the linear recurrence is
\begin{equation}
\label{eq:chareq}
(-2)^{k+1}r_L\left(-\frac{1+q}{2},-\frac{1+x}{2}\right)=0.
\end{equation}
If we find $k+1$ linearly independent solutions $q_0(x), q_1(x),\ldots,
q_{k}(x)$ of Equation (\ref{eq:chareq}) above, then we may
look for a general formula of the form
$$
T^L_n(x)=\alpha_0(x)q_0(x)^n+\cdots +\alpha_k(x)q_k(x)^n.
$$
Since $T^L_n(x)=x^n$ holds for $n\leq k$, the array of functions
$(\alpha_0(x),\ldots,\alpha_k(x))$ may be found as the solution of the
system of equations
\begin{equation}
\label{eq:matrixform}
\begin{pmatrix}
1& 1& \cdots & 1\\
q_0(x) & q_1(x) & \cdots & q_k(x)\\
\vdots & \vdots & \ddots& \vdots\\
q_0(x)^k & q_1(x)^k & \cdots & q_k(x)^k\\
\end{pmatrix}
\begin{pmatrix}
\alpha_0(x)\\
\alpha_1(x)\\
\vdots\\
\alpha_k(x)\\
\end{pmatrix}
=
\begin{pmatrix}
1\\
x\\
\vdots\\
x^k\\
\end{pmatrix}.
\end{equation}
Such a system of equations may be solved using Cramer's rule and the
formula for the Vandermonde determinant. Explicit examples will be
worked out in Section~\ref{sec:rr}.

\section{Generalized Tchebyshev polynomials of
  the first kind and real-rootedness}
\label{sec:rr}

By Theorem~\ref{thm:gtch1} the generalized Tchebyshev polynomials of the
first kind $T^L_n(x)$ possess many important properties of the ordinary
Tchebyshev polynomials of the first kind $T_n(x)$. An important property of
the polynomials $T_n(x)$ is that all their roots are
distinct and real. Since $T^L_n(x)=x^n$ holds for $n\leq k$, for $k\geq
2$ the roots of $T^L_n(x)$ are not distinct for all $n$ any more. The
question remains whether all roots of all polynomials $T^L_n(x)$ could
still be real. In this section we explore this question.

We begin with a complete description of the case $k=1$. The only way to
subdivide a $1$-dimensional simplex is to select $s\geq 1$ distinct
vertices in its interior, thus creating a path of length $s+1$.
The magic polynomial $r_L(u,v)$ is given by
$$
r_L(u,v)=sv+2uv+(s-1)v^2-u^2.
$$
To use Theorem~\ref{thm:TLrec}, we observe that
$$
(-2t)^{2}r_L\left(-\frac{1+t}{2t},  -\frac{1+x}{2}\right)
=t^2((x^2-1)(s-1)-1)+2xt-1,
$$
yielding the recurrence
$$
T^L_n(x)=2x\cdot T^L_{n-1}(x)+((x^2-1)(s-1)-1)T^L_{n-2}(x)
\quad\mbox{for $n\geq 2$.}
$$
Note that for $s=1$ the above recurrence degenerates into the well-known
recurrence of the Tchebyshev polynomials $T_n(x)$. Taking into account
the initial conditions $T^L_0(x)=1$ and $T^L_1(x)=x$ it is not hard to
derive (after solving a quadratic characteristic equation) the following
explicit formula:
\begin{equation}
\label{eq:k=1}
T^L_n(x)=\frac{(x-\sqrt{s(x^2-1)})^n+(x+\sqrt{s(x^2-1)})^n}{2}\quad\mbox{for
  $n\geq 0$.}
\end{equation}
\begin{proposition}
\label{prop:Tdim1}
Let $s\geq 1$ be an integer and $L$ be the subdivision of the $1$-simplex by $s$ interior vertices.
Then the polynomial $T^L_n(x)$
has $n$ distinct real roots
in the open interval $(-1,1)$.
\end{proposition}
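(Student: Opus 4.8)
The explicit formula \eqref{eq:k=1} is the key: write $T^L_n(x) = \frac{1}{2}\bigl((x - \sqrt{s(x^2-1)})^n + (x + \sqrt{s(x^2-1)})^n\bigr)$. The plan is to locate the roots by a trigonometric substitution analogous to the one defining the ordinary $T_n$. First I would handle the interval $x \in (-1,1)$, where $x^2 - 1 < 0$, so $\sqrt{s(x^2-1)} = i\sqrt{s(1-x^2)}$ is purely imaginary. Then $x \pm \sqrt{s(x^2-1)} = x \pm i\sqrt{s(1-x^2)}$ are complex conjugates, and their common modulus is $\sqrt{x^2 + s(1-x^2)} = \sqrt{s - (s-1)x^2}$. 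Writing $x \pm i\sqrt{s(1-x^2)} = \rho e^{\pm i\theta}$ with $\rho = \sqrt{s-(s-1)x^2}$ and $\theta = \theta(x) \in (0,\pi)$ determined by $\cos\theta = x/\rho$, formula \eqref{eq:k=1} becomes $T^L_n(x) = \rho^n \cos(n\theta)$. Since $\rho > 0$ throughout $(-1,1)$, the roots of $T^L_n$ in this interval are exactly the solutions of $\cos(n\theta(x)) = 0$.

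Next I would verify that $\theta: (-1,1) \to (0,\pi)$ is a continuous bijection, so that the $n$ equations $n\theta = (2j-1)\pi/2$ for $j = 1,\ldots,n$ each have exactly one solution in $(-1,1)$, giving $n$ distinct real roots there. Continuity is clear; for the bijection, one checks the endpoint behavior $\theta(x) \to 0$ as $x \to 1^-$ and $\theta(x) \to \pi$ as $x \to -1^+$ (both because $\sqrt{s(1-x^2)} \to 0$ while $x \to \pm 1$), together with strict monotonicity. Monotonicity can be seen from $\cos\theta = x/\sqrt{s-(s-1)x^2}$: differentiating, the right-hand side has derivative $s(s-(s-1)x^2)^{-3/2}\cdot\text{(positive)}$—more simply, $\tan^2\theta = \frac{s(1-x^2)}{x^2}$ (valid for $x \ne 0$) shows $\cot^2\theta = \frac{x^2}{s(1-x^2)}$ is strictly increasing in $x$ on $(0,1)$ and the picture on $(-1,0)$ follows by symmetry, or one argues directly that $\cos\theta$ is strictly increasing in $x$. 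This yields exactly $n$ distinct real roots of $T^L_n$ in $(-1,1)$.

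Finally I must rule out real roots outside $(-1,1)$; but Theorem~\ref{thm:gtch1}(4) already guarantees all real roots lie in $(-1,1)$, and Theorem~\ref{thm:gtch1}(1) says $\deg T^L_n = n$. So the $n$ roots found in $(-1,1)$ are all the complex roots, hence $T^L_n$ has exactly $n$ distinct real roots, all in $(-1,1)$. (Alternatively, one notes $\rho^n\cos(n\theta)$ is manifestly a degree-$n$ polynomial with these $n$ simple sign changes, so no further argument is needed.)

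**Main obstacle.** The only nontrivial point is establishing that $\theta(x)$ sweeps monotonically across the full interval $(0,\pi)$ as $x$ ranges over $(-1,1)$; once that is in hand, counting the solutions of $\cos(n\theta)=0$ is routine. The substitution itself must be set up carefully to keep track of the branch of the square root and to confirm $\rho$ does not vanish (indeed $s - (s-1)x^2 \ge \min(s,1) = 1 > 0$ on $[-1,1]$), but this is straightforward.
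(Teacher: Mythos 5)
Your proposal is correct and follows essentially the same route as the paper: the paper likewise writes $T^L_n(x)=\left(\sqrt{x^2+s(1-x^2)}\right)^n\cos(n\alpha(x))$ with $\alpha(x)=\arccos\!\left(x/\sqrt{x^2+s(1-x^2)}\right)$, shows this map is a monotone bijection onto $(0,\pi)$ (your $\theta$ is exactly this $\alpha$, since $s-(s-1)x^2=x^2+s(1-x^2)$), and counts the $n$ solutions of $\cos(n\alpha)=0$. Your derivation of the trigonometric form via complex conjugate factors and your monotonicity check are just minor variants of the same argument.
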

\begin{proof}
Consider the function
$$
\phi(x)=\frac{x}{\sqrt{x^2+s(1-x^2)}}
$$
on the interval $[-1,1]$. Its derivative, $\phi'(x)=s/(s(1-x^2)+x^2)^{3/2}$, is
positive on $(-1,1)$, thus $\phi(x)$ is strictly increasing on
$(-1,1)$. Obviously we also have $\lim_{x\rightarrow -1} \phi(x)=-1$ and
$\lim_{x\rightarrow 1} \phi(x)=1$. Therefore $\phi(x)$ is a bijection
from $[-1,1]$ to itself. The function $\alpha(x):=\arccos(\phi(x))$ is
well-defined and maps the interval $[-1,1]$ bijectively onto the
interval $[0,\pi]$. Using (\ref{eq:k=1}), it is not difficult to show that we have
\begin{equation}
\label{eq:Ttrig}
T^L_n(x)=\left(\sqrt{x^2+s(1-x^2)}\right)^n \cos(n\alpha(x)),
\end{equation}
which, for $s=1$, is equivalent to the first half of
(\ref{eq:Ttrigdef}).
Now the statement follows from the fact that there are $n$ different
values of $\alpha$ in $(0,\pi)$ for which $\cos(n\alpha)=0$.
\end{proof}

Another interesting special case is when $L$ is obtained by adding just
one vertex to the interior of a $k$-dimensional simplex and we subdivide
the simplex into $k+1$ facets by connecting this new vertex to all other
vertices of the simplex. The resulting magic polynomial is
$$
r_L(u,v)=(1+u)^{k+1}v-(1+v)u^{k+1}.
$$
\begin{example}
When $k=3$ for the complex $L$ above, direct calculation shows that
$$
T^L_6(x)=6-9x^2-60x^4+64x^6.
$$
By Descartes' rule of signs, the polynomial $6-9x-60x^2+64 x^3$ has at
most two positive roots. As a consequence $T^L_6(x)$ has at most
$4$ real roots (Maple finds $4$ real roots indeed, but this is
unimportant). None of these roots can be a double root, because the
derivative of $T^L_6(x)$ is relatively prime to $T^L_6(x)$. Therefore not
all roots of $T^L_6(x)$ are real.
\end{example}


This, however, can not happen when $\dim L =2$, as the following theorem shows.
\begin{theorem}
\label{thm:2dimRealRoot}
Let $L$ be any subdivision of the triangle, with no new vertices added
to the boundary. Then the polynomials $T^L_n(x)$ have only real roots.
\end{theorem}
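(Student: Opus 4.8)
The plan is to read the real zeros of $T^L_n(x)$ directly off the representation \eqref{eq:Taq/x-short}. Put
$$
P_n(x):=\frac{||\alpha_1(x)q_1(x)^n||}{|x|}=\left|\left|\frac{\alpha_1(x)}{x}\right|\right|\cdot||q_1(x)||^n ;
$$
by Corollary~\ref{cor:a/x} and Proposition~\ref{prop:q1neq0} this is a continuous, strictly positive function on $[-1,1]$, so for $x\in(0,1)$ the sign of $T^L_n(x)$ equals the sign of
$$
G_n(x):=g_n(x)+\varepsilon(x)\rho(x)^{n-1}+\overline{\varepsilon(x)}\,\overline{\rho(x)}^{n-1}=g_n(x)+2\cos(\psi_n(x)),
$$
and a zero of $G_n$ in $(0,1)$ is a zero of $T^L_n$. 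Here, invoking Lemmas~\ref{lem:rho} and~\ref{lem:epsilon}, I would write $\rho(x)=e^{i\theta(x)}$ and $\varepsilon(x)=e^{i\beta(x)}$ with continuous $\theta,\beta\colon[0,1]\to[0,\pi]$ such that $\theta(0)=\pi/2$ (by \eqref{eq:qjx=0}), $\theta(1)=0$, $\theta>0$ on $[0,1)$, and $\beta(0)=\beta(1)=0$ (with $\beta\equiv 0$ in the degenerate case $e=3m$), and set $\psi_n(x):=\beta(x)+(n-1)\theta(x)$. Then $\psi_n$ is continuous on $[0,1]$ with $\psi_n(0)=(n-1)\pi/2$ and $\psi_n(1)=0$; by Lemma~\ref{lem:gn}, $g_n$ is continuous with $g_n(0)=0$, $g_n(1)=1$, and $|g_n(x)|\le c<2$ on $[-1,1]$. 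Hence $G_n$ is continuous on $[-1,1]$, $G_n(1)=g_n(1)+2=3>0$, and whenever $\psi_n(x^{*})=k\pi$ for an integer $k$ the value $G_n(x^{*})=g_n(x^{*})+2(-1)^{k}$ is nonzero with the sign of $(-1)^{k}$.

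Since $T^L_n(x)=x^n$ for $n\le k=2$, the cases $n\le 2$ are trivial; assume $n\ge 3$ and set $N:=\lfloor(n-1)/2\rfloor\ge 1$. For $0\le k\le N$ define $\xi_k:=\inf\{x\in[0,1]:\psi_n(x)\le k\pi\}$. Using only continuity and the intermediate value theorem I would check: $\xi_0=1$ (since $\psi_n>0$ on $[0,1)$); the defining set is nonempty for every $k\le N$ and $\psi_n(\xi_k)=k\pi$ for $k\ge 1$; the map $k\mapsto\xi_k$ is strictly decreasing, so $\xi_N<\xi_{N-1}<\cdots<\xi_0=1$ with $\xi_0,\ldots,\xi_{N-1}$ all $<1$; and $\xi_N=0$ exactly when $n$ is odd, in which case $G_n(0)=2\cos(N\pi)$ still has the sign $(-1)^{N}$ required above. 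The decisive observation is that the signs of $G_n$ at $\xi_0,\xi_1,\ldots,\xi_N$ are $(-1)^0,(-1)^1,\ldots,(-1)^N$ and hence alternate, so by the intermediate value theorem each of the $N$ pairwise disjoint open intervals $(\xi_{k+1},\xi_k)\subseteq(0,1)$, $k=0,\ldots,N-1$, contains a zero of $G_n$, and thus a zero of $T^L_n$. This produces $N=\lfloor(n-1)/2\rfloor$ distinct real roots of $T^L_n$ in $(0,1)$.

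To finish, I would assemble the count using the symmetry $(-1)^nT^L_n(-x)=T^L_n(x)$ of Theorem~\ref{thm:gtch1}, which yields $N$ further distinct real roots in $(-1,0)$. If $n$ is odd, then $0$ is a root of the odd polynomial $T^L_n$, giving $2N+1=n$ distinct real roots, so $T^L_n$ — of degree $n$ by Theorem~\ref{thm:gtch1} — has only real (and in fact simple) roots. If $n$ is even, then $\cos((n-1)\pi/2)=0$, so \eqref{eq:Taq/x-short} gives $\lim_{x\to 0}T^L_n(x)/x=P_n(0)G_n(0)=0$; were $T^L_n(0)\neq 0$ this quotient would be unbounded near $0$, so $T^L_n(0)=0$, and since $T^L_n$ is even the root $0$ has multiplicity at least $2$. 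Together with the $2N=n-2$ nonzero real roots found above, this accounts for all $n$ roots of $T^L_n(x)$ with multiplicity. Either way, every root of $T^L_n(x)$ is real.

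I expect essentially all the work to be bookkeeping. With Lemmas~\ref{lem:gn}, \ref{lem:rho}, \ref{lem:epsilon} and Propositions~\ref{prop:q1q0}, \ref{prop:aq} available — the last two being precisely what makes the uniform bound $|g_n(x)|\le c<2$ possible — the theorem reduces to a careful oscillation count, and the only genuinely delicate points are the behaviour of $G_n$ at the endpoints $x=0,1$ and the parity split deciding whether $x=0$ is a simple or a double root. In particular, no monotonicity of $\theta$ or of $\psi_n$ is needed: it suffices that $\psi_n$ is continuous and joins $(n-1)\pi/2$ at $x=0$ to $0$ at $x=1$.
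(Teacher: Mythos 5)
Your proposal is correct, and it runs on the same engine as the paper's proof: the representation \eqref{eq:Taq/x-short}, the uniform bound $|g_n(x)|\le c<2$ coming from Propositions~\ref{prop:q1q0} and \ref{prop:aq} via Lemma~\ref{lem:gn}, and an intermediate-value/oscillation count at points where the unimodular term $\varepsilon(x)\rho(x)^{n-1}$ has argument a multiple of $\pi$. Where you genuinely diverge is in the bookkeeping of that count. The paper works on all of $[-1,1]$, producing $n-2$ interior alternation points $x_1<\cdots<x_{n-2}$ and hence $n-1$ distinct roots of the degree-$(n-1)$ polynomial $T^L_n(x)/x$; because the sign of the imaginary part of $\varepsilon(x)$ flips with the sign of $x$ (Lemma~\ref{lem:epsilon}), this forces the case split $e=3m$ versus $e\neq 3m$ and the two cases $z_j<0$ and $z_j\ge 0$. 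You instead work only on $[0,1]$, where $\arg\varepsilon\in[0,\pi)$, package the phase as $\psi_n=\beta+(n-1)\theta$, locate level crossings $\psi_n=k\pi$ by an infimum construction (so no monotonicity of $\theta$ or $\psi_n$ is needed, only continuity and the endpoint values $\psi_n(0)=(n-1)\pi/2$, $\psi_n(1)=0$), obtain $\lfloor(n-1)/2\rfloor$ alternations and hence that many roots in $(0,1)$, and then finish by the symmetry of Theorem~\ref{thm:gtch1}(3) plus a parity analysis at $x=0$ (simple root for odd $n$; a multiplicity-two root for even $n$, which you correctly extract from $G_n(0)=0$, the finiteness of $\lim_{x\to0}T^L_n(x)/x$, and evenness of $T^L_n$). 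The trade-off: your route eliminates the paper's case analysis entirely and uses Lemma~\ref{lem:epsilon} only through the fact that $\arg\varepsilon\ge0$ on $[0,1]$, at the price of the extra multiplicity argument at the origin, which the paper sidesteps by counting $n-1$ roots of $T^L_n(x)/x$ directly. All the steps you flag as ``to check'' do check out (e.g.\ $\psi_n(\xi_k)=k\pi$, strict decrease of $k\mapsto\xi_k$, $\xi_N=0$ iff $n$ odd); the only blemish is the typo ``$\xi_0,\ldots,\xi_{N-1}$ all $<1$'', which should read $\xi_1,\ldots,\xi_N$.
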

We conclude this section by explaining why
Theorem~\ref{thm:2dimRealRoot} is a special case of
Theorem~\ref{thm:pn}, which will be stated and shown in the next
section.

Let $m$ be the number of interior vertices in $L$ and let $e$ be the
number of edges in $L$ with one end on the boundary and the other end in
the interior of $L$. Thus the total number  of vertices in $L$ is
$$f_0(L)=m+3.$$
Each edge, except for the three edges on the boundary, is
included in exactly two faces, yielding $2f_1(L)=3(f_2(L)+1)$, whereas Euler's
formula gives $f_0(L)+f_2(L)=f_1(L)+1$. Solving these equations for
$f_1(L)$ and $f_2(L)$ yields
$$
f_1(L)=3(m+1)\quad\mbox{and}\quad f_2(L)=2m+1.
$$
In order to compute the magic polynomial, we need to refine the above
face count. Let us say that a face has type $(i,j)$ if it has $i$
vertices on the boundary and $j$ vertices in the interior.
Of the $3m+3$ edges, $3$ edges have type $(2,0)$, $e$ edges have type
$(1,1)$, and the remaining $3m-e$ edges have type $(0,2)$. Of the $2m+1$
$2$-faces, three have type $(2,1)$. To count the number of faces of type
$(1,2)$, observe that each face of type $(1,2)$ or $(2,1)$ contains
exactly two edges of type $(1,1)$ and, conversely, each edge of type
$(1,1)$ belongs to exactly two faces of type $(1,2)$ or $(2,1)$. Thus
the total number of faces of types $(1,2)$ or $(2,1)$ is the
same as the number of type $(1,1)$ edges, that is, $e$. Since the number
of type $(2,1)$ faces is $3$, there are $e-3$ faces of type
$(1,2)$. Finally, the remaining $2m+1-e$ faces must have type $(0,3)$.
Therefore the magic polynomial associated to $L$ is
\begin{equation}
\label{eq:2dmagic}
r_L(u,v)=mv+euv+(3m-e)v^2+3u^2v+(e-3)uv^2+(2m+1-e)v^3-u^3.
\end{equation}
A closer look at the face-counting argument above also implies the
following statement.
\begin{lemma}
\label{lem:em}
The parameters $m$ and $e$ above satisfy $m\geq 1$ and $0<e\leq 2m+1$.
\end{lemma}
Indeed, $2m+1-e$ is the number of faces of type $(0,3)$ and there is at
least one edge of type $(1,1)$ and at least one vertex added in the
interior.

By (\ref{eq:2dmagic}) we have
$$
(-2t)^{3}r_L\left(-\frac{1+t}{2t}, -\frac{1+x}{2}\right)
=((2m+1-e)\cdot x^3+(e-2m)x)\cdot t^3+((e-3)x^2-e)\cdot t^2+3x\cdot t -1.
$$
By Theorem~\ref{thm:TLrec}, the polynomials $T^L_n(x)$ satisfy the
initial conditions $T^L_0(x)=1$, $T^L_1(x)=x$, $T^L_2(x)=x^2$ and the
recurrence
$$
T^L_n(x)=3xT^L_{n-1}(x)+((e-3)x^2-e)T^L_{n-2}(x)+((2m+1-e)\cdot
x^3+(e-2m)x)\cdot T^L_{n-3}(x)
\quad\mbox{for $n\geq 3$.}
$$

\section{A general real-rootedness result}
\label{sec:pn}

In this section we show a generalization of
Theorem~\ref{thm:2dimRealRoot} that seems interesting by its own
merit. This section may be read independently of the geometric and
combinatorial considerations in the rest of the paper. The only
references we make to a preceding section are reminders of
the end of Section~\ref{sec:TL}, where we recall a well-known way of
solving linear recurrences. We will apply the formulas obtained
using that method.

\begin{theorem}
\label{thm:pn}
Let $m$ and $e$ be positive real numbers satisfying $m\geq 1$ and
$0<e\leq 2m+1$. Assume the sequence $\{p_n(x)\}_{n=0}^{\infty}$ of
polynomials satisfies $p_n(x)=x^n$ for $n\in\{0,1,2\}$ and the recurrence
$$
p_n(x)=3x p_{n-1}(x)+((e-3)x^2-e) p_{n-2}(x)+((2m+1-e)\cdot
x^3+(e-2m)x)\cdot p_{n-3}(x)
\quad\mbox{for $n\geq 3$.}
$$
 Then all roots of all polynomials $p_n(x)$ are real.
\end{theorem}

\begin{remark}
\label{rem:e3m}
Note that an immediate consequence of $m\geq 1$ and $e\leq 2m+1$ is that
we also have $e\leq 3m$ with equality only being possible when $m=1$ and $e=3$.
\end{remark}

The characteristic equation associated to the above recurrence for $p_n(x)$ is
\begin{equation}
\label{eq:char}
(q-x)^3+e(1-x^2)(q-x)+2mx(1-x^2)=0.
\end{equation}
According to Cardano's formula, this characteristic equation has the
following three solutions:
\begin{equation}
\label{eq:qj}
q_j(x)=x+\omega^j u(x)+\omega^{2j} v(x)
\end{equation}
where $j\in\{0,1,2\}$, $\omega=\exp(i\cdot 2\pi/3)$,
\begin{equation}
\label{eq:ux}
u(x)=\sqrt[3]{1-x^2}\cdot
\sqrt[3]{-mx+\sqrt{m^2x^2+\frac{e^3(1-x^2)}{27}}}
\quad\mbox{and}
\end{equation}
\begin{equation}
\label{eq:vx}
v(x)=\sqrt[3]{1-x^2}\cdot \sqrt[3]{-mx-\sqrt{m^2x^2+\frac{e^3(1-x^2)}{27}}}.
\end{equation}
We restrict the domain of the functions $q_j(x)$ to real values of
$x$ in the interval $[-1,1]$. Note that $q_0(x)$ is a real-valued
function, whereas $q_1(x)$ and $q_2(x)$ are complex valued functions
such that $q_2(x)$ is the complex conjugate of $q_1(x)$.
The common length of $q_1(x)$ and $q_2(x)$ is given by
$$
\|q_1(x)\|^2=\|q_2(x)\|^2=q_1(x)\cdot q_2(x)
=(x+\omega u(x)+\omega^2 v(x))(x+\omega^2 u(x)+\omega
v(x)),\quad\mbox{that is,}
$$
\begin{equation}
\label{eq:q1l}
\|q_1(x)\|^2=\|q_2(x)\|^2=x^2-(u(x)+v(x))\cdot x+u(x)^2+v(x)^2-u(x)v(x).
\end{equation}
Similarly, for $j=0$, (\ref{eq:qj}) yields
$|q_0(x)|^2=(x+u(x)+v(x))^2$, that is,
\begin{equation}
\label{eq:q0}
|q_0(x)|^2=x^2+u(x)^2+v(x)^2+2x(u(x)+v(x))+2u(x)v(x).
\end{equation}
For future reference we note that
\begin{equation}
\label{eq:uvx=0}
u(0)=\sqrt{e/3}, \quad
v(0)=-\sqrt{e/3},\quad \mbox{implying}
\end{equation}
\begin{equation}
\label{eq:qjx=0}
q_0(0)=0, \quad
q_1(0)=\sqrt{e} i\quad\mbox{and}\quad
q_2(0)=-\sqrt{e} i.
\end{equation}
Similarly
\begin{equation}
\label{eq:uvx=pm1}
u(1)=u(-1)=v(1)=v(-1)=0\quad\mbox{implies}
\end{equation}
\begin{equation}
\label{eq:qjx=pm1}
q_j(-1)=-1\quad\mbox{and}\quad q_j(1)=1\quad\mbox{for $j=0,1,2$.}
\end{equation}
As a part of the derivation of Cardano's formula, $u(x)$ and $v(x)$ are
known to satisfy the following equalities:
\begin{equation}
\label{eq:p}
u(x)\cdot v(x)=-\frac{e}{3}\cdot (1-x^2),\quad\mbox{and}
\end{equation}
\begin{equation}
\label{eq:q}
u(x)^3+v(x)^3=2mx(x^2-1).
\end{equation}
Besides these classical identities, we will use the following two
inequalities about $u(x)$ and $v(x)$.
\begin{lemma}
\label{lem:u3-v3}
The function $u(x)^3-v(x)^3$ is nonnegative on the interval
$[-1,1]$. Equality to zero holds only when $x=\pm1$.
\end{lemma}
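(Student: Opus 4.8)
The plan is to work directly from the explicit radical expressions for $u(x)$ and $v(x)$ in \eqref{eq:ux} and \eqref{eq:vx}, so I would first set $w(x):=\sqrt[3]{1-x^2}>0$ for $x\in(-1,1)$ and write $u(x)^3=w(x)^3\bigl(-mx+R(x)\bigr)$, $v(x)^3=w(x)^3\bigl(-mx-R(x)\bigr)$, where $R(x):=\sqrt{m^2x^2+\tfrac{e^3}{27}(1-x^2)}$. Then
$$
u(x)^3-v(x)^3=2\,w(x)^3\,R(x),
$$
and since $w(x)^3=1-x^2\ge 0$ on $[-1,1]$ with equality only at $x=\pm 1$, and $R(x)\ge 0$ always, nonnegativity is immediate; equality forces $1-x^2=0$, i.e.\ $x=\pm 1$ (note $R(x)$ cannot vanish on $(-1,1)$ since $m\ge 1$ and $e\ge 3$ there make the radicand strictly positive, but this is not even needed).

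The only genuine subtlety is to make sure the cube roots are taken consistently: Cardano's formula requires that $u(x)v(x)$ equal the real quantity $-\tfrac{e}{3}(1-x^2)$ as recorded in \eqref{eq:p}, which pins down a real branch of the cube roots for $x\in[-1,1]$. So I would open the argument by fixing $u(x)$ to be the real cube root of $w(x)^3(-mx+R(x))$ and then defining $v(x)$ by the relation \eqref{eq:p} (equivalently, the real cube root of $w(x)^3(-mx-R(x))$); one checks these two prescriptions agree because the product of the two real cube roots is $w(x)^2\sqrt[3]{(-mx)^2-R(x)^2}=w(x)^2\sqrt[3]{-\tfrac{e^3}{27}(1-x^2)}=-\tfrac{e}{3}(1-x^2)$, using $w(x)^3=1-x^2$. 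With real branches chosen, $u(x)^3$ and $v(x)^3$ are exactly the bracketed real expressions above, and the computation of their difference is the one-line identity displayed.

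I would then conclude: for $x\in(-1,1)$ we have $w(x)^3=1-x^2>0$ and $R(x)>0$ (as the radicand $m^2x^2+\tfrac{e^3}{27}(1-x^2)$ is a sum of a nonnegative and a positive term), hence $u(x)^3-v(x)^3=2(1-x^2)R(x)>0$; and at $x=\pm 1$ both $u$ and $v$ vanish by \eqref{eq:uvx=pm1}, so the difference is $0$ there. This gives nonnegativity on all of $[-1,1]$ with equality precisely at the endpoints. There is no real obstacle here — the lemma is essentially a bookkeeping consequence of Cardano's formula — so the main thing to be careful about is simply stating the branch choice for the cube roots clearly enough that the identity $u(x)^3-v(x)^3=2(1-x^2)R(x)$ is unambiguous.
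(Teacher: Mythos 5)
Your proposal is correct and is essentially the paper's own argument: the paper disposes of the lemma by exactly the identity you display, namely $u(x)^3-v(x)^3=2(1-x^2)\sqrt{m^2x^2+\frac{e^3(1-x^2)}{27}}$, from which nonnegativity and the equality cases $x=\pm 1$ are immediate. One small quibble: your parenthetical claim that the nonvanishing of $R(x)$ on $(-1,1)$ ``is not even needed'' is inaccurate for the equality statement (if $R$ vanished at an interior point the difference would vanish there too), but your final paragraph does invoke $R(x)>0$ on $(-1,1)$ correctly, so the argument as a whole is complete.
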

This lemma is a direct consequence of
$$
u(x)^3-v(x)^3=2(1-x^2)\sqrt{m^2x^2+\frac{e^3(1-x^2)}{27}}.
$$
\begin{lemma}
\label{lem:u+v}
The functions $u(x)$ and $v(x)$ satisfy
$$
x(u(x)+v(x))\leq 0
$$
for all real $x\in [-1,1]$. Equality holds exactly when $x=\pm 1$ or $x=0$.
\end{lemma}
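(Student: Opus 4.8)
The plan is to strip off the common nonnegative factor $\sqrt[3]{1-x^2}$ from $u(x)+v(x)$ and reduce the inequality to a short sign computation for the sum of two real cube roots, via the factorization $a^3+b^3=(a+b)(a^2-ab+b^2)$.

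Concretely, write $D=D(x):=\sqrt{m^2x^2+e^3(1-x^2)/27}$; on $[-1,1]$ the radicand is $\ge m^2x^2\ge 0$, so $D$ is real and in fact $D\ge m|x|$. Set $p(x):=\sqrt[3]{-mx+D}$ and $q(x):=\sqrt[3]{-mx-D}$, where $\sqrt[3]{\,\cdot\,}$ denotes the real cube root (the branch that makes $q_0(x)$ real); then \eqref{eq:ux} and \eqref{eq:vx} read $u(x)=\sqrt[3]{1-x^2}\,p(x)$ and $v(x)=\sqrt[3]{1-x^2}\,q(x)$. Since $D\ge m|x|$ we have $-mx+D\ge 0\ge -mx-D$, hence $p(x)\ge 0\ge q(x)$, so that $p(x)^2-p(x)q(x)+q(x)^2\ge 0$ with equality only when $p(x)=q(x)=0$; but $p(x)=q(x)=0$ forces $D(x)=0$ and $x=0$. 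Thus $p(x)^2-p(x)q(x)+q(x)^2>0$ for every $x\in(-1,1)\setminus\{0\}$. Combining this with $p(x)^3+q(x)^3=-2mx$ and the factorization $p^3+q^3=(p+q)(p^2-pq+q^2)$, and using $m\ge 1$, we obtain $\operatorname{sign}\bigl(p(x)+q(x)\bigr)=\operatorname{sign}(-2mx)=-\operatorname{sign}(x)$ for $x\in(-1,1)\setminus\{0\}$, so $x\bigl(p(x)+q(x)\bigr)<0$ there.

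It then only remains to multiply back by $\sqrt[3]{1-x^2}\ge 0$: on $(-1,0)\cup(0,1)$ this factor is strictly positive, whence $x\bigl(u(x)+v(x)\bigr)<0$; at $x=0$ the explicit factor $x$ makes $x\bigl(u(x)+v(x)\bigr)=0$; and at $x=\pm1$ the factor $\sqrt[3]{1-x^2}$ makes it $0$. This is exactly the asserted statement. I do not anticipate a genuine obstacle; the only points requiring care are fixing the real-cube-root branch so that $p^3=-mx+D$, $q^3=-mx-D$ and the factorization above are valid over $\mathbb{R}$, and separating the boundary values $x=0,\pm1$ from the open interval. The substantive inputs are merely the elementary estimate $D\ge m|x|$, which forces the two cube-root radicands to have opposite signs, together with the positivity of $p^2-pq+q^2$ away from $p=q=0$; here $m\ge 1$ since $L$ properly subdivides the $2$-simplex.
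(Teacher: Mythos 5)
Your proof is correct, and it arrives at the same reduction as the paper — both factor out $\sqrt[3]{1-x^2}$ and study the sign of $w(x)=p(x)+q(x)$ — but the mechanism for determining that sign is genuinely different. The paper argues by continuity: $w$ is continuous and real-valued on $[-1,1]$, $w(1)=\sqrt[3]{-2m}<0$, $w(-1)=\sqrt[3]{2m}>0$, and $w$ vanishes on $[-1,1]$ only at $x=0$, so $w$ is positive on $[-1,0)$ and negative on $(0,1]$. You instead read off the sign pointwise from the identity $p^3+q^3=(p+q)(p^2-pq+q^2)$ combined with $p^3+q^3=-2mx$ and the strict positivity of $p^2-pq+q^2$ once $p=q=0$ is excluded (which you correctly rule out for $x\neq 0$, since $p=q=0$ forces $x=0$). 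The underlying fact is the same — $w(x)=0$ forces $p=-q$, hence $-2mx=0$ — but your route is purely algebraic, needs no intermediate-value/connectedness reasoning or endpoint evaluations, and gives the strict inequality on $(-1,1)\setminus\{0\}$ directly; the paper's version is shorter to state but leans on continuity and the sign of $w$ at $\pm1$. Both arguments handle the equality cases $x=0,\pm1$ correctly, both use the same real-cube-root branch (consistent with the constraint $u(x)v(x)=-\tfrac{e}{3}(1-x^2)$ of Cardano's formula), and both use $m\geq 1$ (the paper at $w(\pm1)$, you in $\operatorname{sign}(-2mx)$), which is legitimate since $L$ has at least one interior vertex.
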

\begin{proof}
Consider the function $w: [-1,1]\rightarrow {\mathbb R}$, given by
$$
w(x)=\sqrt[3]{-mx+\sqrt{m^2x^2+\frac{e^3(1-x^2)}{27}}} + \sqrt[3]{-mx-\sqrt{m^2x^2+\frac{e^3(1-x^2)}{27}}}
$$
This is a continuous function on $[-1,1]$, satisfying
$w(1)=\sqrt[3]{-2m}<0$  and $w(-1)=\sqrt[3]{2m}>0$. Furthermore, the
only solution of $w(x)=0$ on
the interval $[-1,1]$ is $x=0$. We obtain that $w(x)$ is positive on
$[-1,0)$ and negative on $(0,1]$. Since, by (\ref{eq:ux}) and
 (\ref{eq:vx}), $w(x)$ satisfies $u(x)+v(x)=\sqrt[3]{1-x^2}\cdot w(x)$,
the sign of $u(x)+v(x)$ is the same as the sign of $w(x)$ for all
$x\in (-1,1)$, and the statement follows directly.
\end{proof}

Just like at the end of Section~\ref{sec:TL}, we may look for
$p_n(x)$ in the form
\begin{equation}
\label{eq:Taq}
p_n(x)=\alpha_0(x)q_0(x)^n+\alpha_1(x)q_1(x)^n+\alpha_2(x)q_2(x)^n,
\end{equation}
where the functions $\alpha_0(x)$, $\alpha_1(x)$ and $\alpha_2(x)$
may be found by solving (\ref{eq:matrixform}).
\begin{lemma}
\label{lem:alpha}
On the interval $(-1,1)$, the  functions $\alpha_0(x)$, $\alpha_1(x)$ and
$\alpha_2(x)$ are given by
$$
\alpha_0(x)=\frac{(u(x)^2-u(x)v(x)+v(x)^2)(u(x)-v(x))}{3(u(x)^3-v(x)^3)},\quad\mbox{and}
$$
$$
\alpha_j(x)=\frac{(u(x)^2+(-1)^{j}i\sqrt{3}u(x)v(x)-v(x)^2)(u(x)+v(x))}{3(u(x)^3-v(x)^3)}\quad\mbox{for
$j=1,2$.}
$$
\end{lemma}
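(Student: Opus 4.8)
The plan is to solve the $3\times 3$ Vandermonde system~(\ref{eq:matrixform}) explicitly and then simplify the result. First I would record what makes this solution well-defined on $(-1,1)$: there $m^2x^2+e^3(1-x^2)/27>0$, so the quantities $u(x)$ and $v(x)$ from~(\ref{eq:ux})--(\ref{eq:vx}) are real, and by Lemma~\ref{lem:u3-v3} they satisfy $u(x)^3-v(x)^3>0$; in particular $u(x)\neq v(x)$ and $u(x),v(x)$ are not both zero. Since $q_1(x)-q_2(x)=(\omega-\omega^2)(u(x)-v(x))\neq 0$, and since $q_0(x)-q_1(x)=(1-\omega)u(x)+(1-\omega^2)v(x)$ cannot vanish for real $u(x),v(x)$ that are not both zero (vanishing would force $u(x)=-(1+\omega)v(x)$, impossible because the left-hand side is real and the right-hand side is not), the three roots $q_0(x),q_1(x),q_2(x)$ of~(\ref{eq:char}) are pairwise distinct on $(-1,1)$. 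Hence the coefficient matrix in~(\ref{eq:matrixform}) is an invertible Vandermonde matrix, and $(\alpha_0(x),\alpha_1(x),\alpha_2(x))$ is uniquely determined there.

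Because the right-hand side of~(\ref{eq:matrixform}) is the vector $(1,x,x^2)^{\mathrm{T}}$, i.e.\ the Vandermonde vector of the node $x$, the system says exactly that $\sum_{j}\alpha_j(x)\,p(q_j(x))=p(x)$ for every polynomial $p$ of degree at most $2$; evaluating this on the Lagrange basis polynomials of the nodes $q_0(x),q_1(x),q_2(x)$ forces
$$\alpha_j(x)=\prod_{i\neq j}\frac{x-q_i(x)}{q_j(x)-q_i(x)},$$
which one can equally well read off from Cramer's rule and the Vandermonde product formula. Now I would substitute $q_j(x)-x=\omega^j u(x)+\omega^{2j}v(x)$ from~(\ref{eq:qj}) into the numerators and denominators and expand, using only $\omega^3=1$ and $1+\omega+\omega^2=0$. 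Writing $u=u(x)$ and $v=v(x)$, the case $j=0$ gives $(q_1-x)(q_2-x)=u^2-uv+v^2$ and $(q_1-q_0)(q_2-q_0)=3(u^2+uv+v^2)$; multiplying numerator and denominator by $u-v$ and invoking $u^3-v^3=(u-v)(u^2+uv+v^2)$ produces the asserted formula for $\alpha_0(x)$.

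For $j=1,2$ the same substitution turns $\alpha_j(x)$ into a quotient of complex linear expressions in $u$ and $v$; I would pull $u+v$ out of the numerator (using $x-q_0=-(u+v)$) and $u-v$ out of the denominator (using $q_1-q_2=(\omega-\omega^2)(u-v)$), and then rationalize the remaining quotient by multiplying through by the complex conjugate of its denominator, which collapses it to $\frac{u^2+(-1)^{j}i\sqrt{3}\,uv-v^2}{3(u^2+uv+v^2)}$; reassembling and again using $u^3-v^3=(u-v)(u^2+uv+v^2)$ gives the stated formulas for $\alpha_1(x)$ and $\alpha_2(x)$. Alternatively, once $\alpha_1(x)$ is in hand, $\alpha_2(x)=\overline{\alpha_1(x)}$ because $q_2(x)=\overline{q_1(x)}$ while $u,v,x$ are real on $(-1,1)$. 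The only real difficulty is keeping the arithmetic with the primitive cube roots of unity (and, for $\alpha_1,\alpha_2$, with the conjugation) straight; this is where computational slips are most likely, but it is routine rather than a conceptual obstacle. As a consistency check, at $x=0$ one has $u(0)+v(0)=0$ by~(\ref{eq:uvx=0}), so the formulas give $\alpha_1(0)=\alpha_2(0)=0$ and $\alpha_0(0)=1$, in agreement with $T^L_0(x)=1$ and the values~(\ref{eq:qjx=0}).
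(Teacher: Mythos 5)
Your proposal is correct and follows essentially the same route as the paper: it solves the Vandermonde system \eqref{eq:matrixform} explicitly (your Lagrange-interpolation formula $\alpha_j(x)=\prod_{i\neq j}\frac{x-q_i(x)}{q_j(x)-q_i(x)}$ is exactly the Cramer's rule/Vandermonde-determinant computation the paper carries out) and then simplifies using $\omega^3=1$, $1+\omega+\omega^2=0$ and $u^3-v^3=(u-v)(u^2+uv+v^2)$. Your preliminary check that the $q_j(x)$ are pairwise distinct on $(-1,1)$, via Lemma~\ref{lem:u3-v3}, is a welcome explicit justification of invertibility that the paper leaves implicit, but it does not change the argument.
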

\begin{proof}
We use Cramer's formula to solve (\ref{eq:matrixform}). For all
$\alpha_j(x)$, the denominator in this formula is the Vandermonde determinant
$$
\det\begin{pmatrix}
1& 1& 1\\
q_0(x) & q_1(x) & q_2(x)\\
q_0(x)^2 & q_1(x)^2 & q_2(x)^2\\
\end{pmatrix}
=(q_1(x)-q_0(x))(q_2(x)-q_0(x))(q_2(x)-q_1(x)),
$$
which, by (\ref{eq:qj}), equals
$$
((\omega-1)u(x)+(\omega^2-1)v(x)) ((\omega^2-1)u(x)+(\omega-1)v(x))
((\omega^2-\omega)u(x)+(\omega-\omega^2)v(x)).
$$
After taking out a $(\omega-1)$ from the first factor, $(\omega^2-1)$
from the second factor and $(\omega^2-\omega)$ from the third factor,
and after noting that
$$
(\omega-1)(\omega^2-1)(\omega^2-\omega)=-3\sqrt{3}i,
$$
we obtain that the common denominator in Cramer's formula is
$$
-3\sqrt{3} i(u(x)-\omega^2 v(x))(u(x)-\omega v(x))(u(x)-v(x))
=-3\sqrt{3}i (u(x)^3-v(x)^3).
$$
The numerators in Cramer's  formula are also Vandermonde determinants
and may be computed in a completely analogous way. The stated equalities
follow after simplifying by $-\sqrt{3}i$.
\end{proof}
By Lemma~\ref{lem:u3-v3}, $u(x)^3-v(x)^3$ is real and strictly positive
on the interval $(-1,1)$, hence the formulas stated in
Lemma~\ref{lem:alpha} above are well-defined. In order to
extend the definition of $\alpha_j(x)$ to $x=\pm 1$ in a continuous
fashion, we state the following, equivalent formulas for $\alpha_j(x)$.
\begin{lemma}
\label{lem:alphabis}
On the set $(-1,1)\setminus \{0\}$, the functions $\alpha_j(x)$ are
equivalently given by
\begin{equation}
\label{eq:alphaj}
\alpha_j(x)=\frac{mx}{e(q_j(x)-x)+3mx}\quad\mbox{for $j=0,1,2$.}
\end{equation}
These formulas may be continuously extended to $[-1,1]$
by setting $\alpha_j(1)=1/3$, $\alpha_j(-1)=1/3$ for
$j=0,1,2$, $\alpha_0(0)=1$ and $\alpha_j(0)=0$ for $j=1,2$.
\end{lemma}
\begin{proof}
Observe first that, by (\ref{eq:q}), the sum $u(x)^3+v(x)^3$
is nonzero on the set $(-1,1)\setminus \{0\}$ thus the same holds for
$u(x)+v(x)$ by
$u(x)^3+v(x)^3=(u(x)+v(x))(u(x)^2-u(x)v(x)+v(x)^2)$. Using these
observations, we may rewrite $\alpha_0(x)$ as
$$
\alpha_0(x)=
\frac{\displaystyle \frac{u(x)^3+v(x)^3}{u(x)+v(x)}(u(x)-v(x))}{3(u(x)^3-v(x)^3)}
=
\frac{2mx(x^2-1)}{3(u(x)^2+u(x)v(x)+v(x)^2)(u(x)+v(x))}.
$$
Here $u(x)+v(x)$ may be replaced by $q_0(x)-x$. Furthermore, by (\ref{eq:p}),
the factor $u(x)^2+u(x)v(x)+v(x)^2$ in the denominator above may be
rewritten as
$$
u(x)^2+u(x)v(x)+v(x)^2=(u(x)+v(x))^2-u(x)v(x)=(q_0(x)-x)^2-\frac{e(x^2-1)}{3}.
$$
Thus we obtain
$$
\alpha_0(x)
=\frac{2mx(x^2-1)}{3\left((q_0(x)-x)^2-\frac{e(x^2-1)}{3}\right)(q_0(x)-x)}
=\frac{2mx(x^2-1)}{3((q_0(x)-x)^3-e(x^2-1)(q_0(x)-x))}.
$$
After expanding $(q_0(x)-x)^3$ and using (\ref{eq:char}) to replace
$q_0(x)^3$ with a linear expression of $q_0(x)$, we obtain
$$
\alpha_0(x)=\frac{2mx(x^2-1)}{6mx(x^2-1)+2e(x^2-1)(q_0(x)-x)}.
$$
Simplifying by $2(x^2-1)$ yields the stated equation for $\alpha_0(x)$.
The calculations for $\alpha_1(x)$ and $\alpha_2(x)$ are completely
analogous, therefore omitted.


Substituting $x=1$, respectively $x=-1$, in the stated formulas for
$\alpha_j(x)$
yields $\alpha_j(1)=1/3$ and $\alpha_j(-1)=1/3$, as we have $q_j(1)=1$
and $q_j(-1)=-1$ for $j=0,1,2$. These are obviously continuous
extensions of the functions $\alpha_j(x)$. By (\ref{eq:qjx=0}), for $j\in
\{1,2\}$ the denominator $e(q_j(x)-x)+3mx$ is nonzero at $x=0$ and
$\alpha_j(0)=0$ is a continuous extension of the given formula. Finally,
to find the limit of $\alpha_0(x)$ at $x=0$, observe that using
(\ref{eq:q}) we may rewrite
$$q_0(x)=x+u(x)+v(x)=x+\frac{u(x)^3+v(x)^3}{u(x)^2-u(x)v(x)+v(x)^2}$$
as
$$
q_0(x)=x\left(1+\frac{2m(x^2-1)}{u(x)^2-u(x)v(x)+v(x)^2}\right).
$$
Using (\ref{eq:uvx=0}), the last equation yields
\begin{equation}
\label{eq:q0/x}
\lim_{x\rightarrow 0} \frac{q_0(x)}{x}=\frac{e-2m}{e}.
\end{equation}
Equation (\ref{eq:q0/x}) implies
$$\lim_{x\rightarrow 0} \alpha_0(x)
=\lim_{x\rightarrow 0} \frac{m}{e(q_0(x)/x-1)+3m}=1.
$$
\end{proof}
\begin{definition}
For $j=0,1,2$, we define the functions $\alpha_j(x)$ on the interval $[-1,1]$
by the formulas stated in Lemma~\ref{lem:alphabis}.
\end{definition}
Note that the functions $\alpha_j(x)$ are also given by the
equation (\ref{eq:matrixform}) on the interval $(-1,1)$,
and for such values of $x$ our definition is equivalent to the solution
given in Lemma~\ref{lem:alpha}. Our definition extends these functions to
$x=\pm 1$ in a continuous way, such that they are still solutions of
the system (\ref{eq:matrixform}) which is degenerate for these
values of $x$.

\begin{corollary}
\label{cor:a/x}
The function $\frac{\alpha_1(x)}{x}$ is well-defined and nowhere zero on
$[-1,1]$.
\end{corollary}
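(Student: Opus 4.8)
The plan is to use the closed formula \eqref{eq:alphaj} for $\alpha_1(x)$, namely $\alpha_1(x)=\frac{mx}{e(q_1(x)-x)+3mx}$, and to show that the expression $\frac{\alpha_1(x)}{x}=\frac{m}{e(q_1(x)-x)+3mx}$ is defined and nonzero everywhere on $[-1,1]$. Since $m\geq 1$, the numerator is a nonzero constant, so the only thing to verify is that the denominator $D(x):=e(q_1(x)-x)+3mx$ is finite and never zero on $[-1,1]$. The functions $u(x)$, $v(x)$, and hence $q_1(x)=x+\omega u(x)+\omega^2 v(x)$, are bounded (continuous) on the compact interval $[-1,1]$, so $D(x)$ is finite there; thus the whole issue is the nonvanishing of $D(x)$.

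First I would dispose of the boundary and center. At $x=\pm 1$ we have $q_1(\pm 1)=\pm 1$ by \eqref{eq:qjx=pm1}, so $q_1(x)-x=0$ and $D(\pm 1)=3mx=\pm 3m\neq 0$. At $x=0$ we have, by \eqref{eq:qjx=0}, $q_1(0)=\sqrt e\, i$, so $q_1(0)-0=\sqrt e\, i$ and $D(0)=e\sqrt e\, i\neq 0$. So from now on assume $x\in(-1,1)\setminus\{0\}$; on this set $u(x)v(x)=-\frac{e}{3}(1-x^2)$ is real and strictly negative by \eqref{eq:p}, which will be the key structural fact.

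For the generic case I would argue via the imaginary part of $D(x)$. Write $u=u(x)$, $v=v(x)$; then $q_1(x)-x=\omega u+\omega^2 v$, and with $\omega=e^{i2\pi/3}$ we get $q_1(x)-x=-\tfrac12(u+v)+\tfrac{\sqrt3}{2}i\,(u-v)$. Now $u$ and $v$ are the two cube roots appearing in \eqref{eq:ux}--\eqref{eq:vx}; on $(-1,1)\setminus\{0\}$ the quantity under the square root, $m^2x^2+\frac{e^3}{27}(1-x^2)$, is positive, so (choosing the real cube roots, as the paper does) $u$ and $v$ are both real. Hence $\mathrm{Im}(q_1(x)-x)=\tfrac{\sqrt3}{2}(u-v)$ and $\mathrm{Im}\,D(x)=e\cdot\tfrac{\sqrt3}{2}(u-v)$. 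By Lemma~\ref{lem:u3-v3}, $u^3-v^3>0$ on $(-1,1)$, and since $u^3-v^3=(u-v)(u^2+uv+v^2)=(u-v)\big((u+v)^2-uv\big)$ with $uv<0$ forcing $(u+v)^2-uv>0$, we conclude $u-v>0$ on $(-1,1)$. Therefore $\mathrm{Im}\,D(x)=\frac{\sqrt3}{2}e\,(u-v)>0$ on $(-1,1)\setminus\{0\}$, so in particular $D(x)\neq 0$ there. Combining the three cases gives $D(x)\neq 0$ on all of $[-1,1]$, and therefore $\frac{\alpha_1(x)}{x}=\frac{m}{D(x)}$ is well-defined and nowhere zero on $[-1,1]$.

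The main obstacle is purely bookkeeping: making sure that on $(-1,1)\setminus\{0\}$ one is genuinely working with the \emph{real} cube roots $u,v$ (so that $u-v$ is a real number and "$\mathrm{Im}$" extracts what I claim), and that the sign deduction $u-v>0$ is legitimate — this is exactly what Lemma~\ref{lem:u3-v3} together with $uv<0$ delivers. Once those are in hand the computation of $\mathrm{Im}\,D(x)$ is immediate. An alternative, if one prefers to avoid splitting off $x=0$ and $x=\pm1$, is to work directly from the formula in Lemma~\ref{lem:alpha}: there $\alpha_1(x)=\frac{(u^2+i\sqrt3\,uv-v^2)(u+v)}{3(u^3-v^3)}$, and one would instead track when the numerator vanishes, but handling the removable singularities at $x=0,\pm1$ there is messier, so I prefer the $D(x)\neq 0$ route above.
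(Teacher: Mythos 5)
Your proposal is correct and follows essentially the same route as the paper: both use the formula $\frac{\alpha_1(x)}{x}=\frac{m}{e(q_1(x)-x)+3mx}$ from Lemma~\ref{lem:alphabis}, observe that the denominator can vanish only if $q_1(x)-x=\omega u(x)+\omega^2 v(x)$ is real (i.e.\ $u(x)=v(x)$, which by Lemma~\ref{lem:u3-v3} forces $x=\pm1$), and then check $x=\pm1$ directly via \eqref{eq:qjx=pm1}. Your explicit computation of $\operatorname{Im}D(x)=\tfrac{\sqrt3}{2}e\,(u(x)-v(x))$ and the sign argument via $u(x)v(x)<0$ is just a slightly more detailed rendering of the same idea.
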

Indeed, by Lemma~\ref{lem:alphabis} we may write
\begin{equation}
\label{eq:alpha1/x}
\frac{\alpha_1(x)}{x}
=
\frac{m}{e(q_1(x)-x)+3mx}.
\end{equation}
For a real number $x$, the denominator can only be zero when $q_1(x)-x=\omega
u(x)+\omega^2 v(x)$ is a real number, i.e., when $u(x)=v(x)$. The only
solutions of $u(x)=v(x)$ are $x=\pm 1$. However, by (\ref{eq:qjx=pm1}),
the denominator is nonzero at $x=\pm 1$.

Next we make an analogous observation for $q_1(x)$.

\begin{proposition}
\label{prop:q1neq0}
The function $q_1(x)$ is nowhere zero on the interval $[-1,1]$.
\end{proposition}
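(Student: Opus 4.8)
The plan is to show that on the open interval $(-1,1)$ the complex number $q_1(x)$ has strictly positive imaginary part, while at the endpoints $q_1(\pm 1)=\pm 1$; together these two facts cover all of $[-1,1]$.

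First I would record that $u(x)$ and $v(x)$ are real for every $x\in[-1,1]$: on this interval $1-x^2\ge 0$ and $m^2x^2+\tfrac{e^3(1-x^2)}{27}\ge 0$, so the inner square root appearing in (\ref{eq:ux}) and (\ref{eq:vx}) is a nonnegative real number, hence so are $-mx\pm\sqrt{m^2x^2+\tfrac{e^3(1-x^2)}{27}}$, and taking the real cube roots makes $u(x)$ and $v(x)$ real (one even gets $u(x)\ge 0\ge v(x)$, but only reality will be used below). Writing $\omega=e^{i2\pi/3}=-\tfrac12+\tfrac{\sqrt3}{2}i$ and $\omega^2=\bar\omega=-\tfrac12-\tfrac{\sqrt3}{2}i$, formula (\ref{eq:qj}) then gives
\[
q_1(x)=x-\tfrac12\bigl(u(x)+v(x)\bigr)+\tfrac{\sqrt3}{2}\bigl(u(x)-v(x)\bigr)\,i,
\]
so that $\operatorname{Im} q_1(x)=\tfrac{\sqrt3}{2}\bigl(u(x)-v(x)\bigr)$.

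Now for $x\in(-1,1)$, Lemma~\ref{lem:u3-v3} gives $u(x)^3-v(x)^3>0$, and since $t\mapsto t^3$ is strictly increasing on $\mathbb R$ this forces $u(x)>v(x)$; hence $\operatorname{Im} q_1(x)>0$, and in particular $q_1(x)\neq 0$ on $(-1,1)$. At the two endpoints, (\ref{eq:qjx=pm1}) gives $q_1(-1)=-1$ and $q_1(1)=1$, which are nonzero. This establishes the statement.

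I do not expect any genuine obstacle here; the only place requiring a bit of care is the branch bookkeeping ensuring $u(x),v(x)\in\mathbb R$ on $[-1,1]$. It is worth remarking that the alternative route of substituting $q=0$ into the characteristic equation (\ref{eq:char}) would reduce matters to excluding the value $x=0$ (handled by (\ref{eq:qjx=0})) together with $x^2=\tfrac{2m-e}{2m-e+1}$, i.e.\ it would require an extra case analysis that the imaginary-part argument sidesteps.
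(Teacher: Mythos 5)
Your proof is correct, but it follows a different route from the paper's. The paper argues by contradiction through the modulus: if $q_1(x)=0$, then (\ref{eq:q1l}) gives $x^2-(u(x)+v(x))x+u(x)^2+v(x)^2-u(x)v(x)=0$, viewed as a quadratic in $x$ with real coefficients, whose discriminant is rewritten via (\ref{eq:p}) as $D=-3(u(x)^2+v(x)^2)+2e(x^2-1)$; this is negative for $x\in(-1,1)$, and the endpoints are excluded by $q_1(\pm 1)=\pm 1$. You instead compute $\operatorname{Im} q_1(x)=\tfrac{\sqrt3}{2}\bigl(u(x)-v(x)\bigr)$ and invoke Lemma~\ref{lem:u3-v3} together with the reality of $u,v$ (real cube roots of real quantities on $[-1,1]$, which is indeed the branch the paper uses, cf.\ (\ref{eq:uvx=0}) and the proof of Lemma~\ref{lem:u+v}) to conclude $\operatorname{Im} q_1(x)>0$ on $(-1,1)$, finishing at $x=\pm1$ by (\ref{eq:qjx=pm1}). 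Your argument is shorter and more transparent, and it yields the slightly stronger fact that $q_1$ lies in the open upper half-plane on $(-1,1)$; in fact it is essentially the observation the paper itself makes later, in the proof of Lemma~\ref{lem:rho} (that $u(x)-v(x)>0$ on $(-1,1)$), so the two proofs draw on closely related inequalities ($u^3-v^3>0$ versus the discriminant bound), just packaged differently. Your closing remark about substituting $q=0$ into (\ref{eq:char}) is right to be cautious: a zero root of the cubic at some $x$ need not be $q_1(x)$ (at $x=0$ it is $q_0$), which is exactly the case analysis your imaginary-part argument avoids.
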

\begin{proof}
If $q_1(x)=0$, then (\ref{eq:q1l}) gives
$$
x^2-(u(x)+v(x))\cdot x+u(x)^2+v(x)^2-u(x)v(x)=0.
$$
Consider this as a quadratic equation for $x$, with real coefficients.
It can only have a real solution when its discriminant
$$
D=(u(x)+v(x))^2-4(u(x)^2+v(x)^2-u(x)v(x))
$$
is not negative. Using (\ref{eq:p}), the discriminant may be rewritten as
$$
D=-3(u(x)^2+v(x)^2)+2e(x^2-1).
$$
Here $-3(u(x)^2+v(x)^2)$ is at most zero, and, for $x\in [-1,1]$, we
also have $2e(x^2-1)\leq 0$. Thus $D\geq 0$ is only possible when $x=\pm
1$. However, $q_1(x)$ is not zero  at $x=\pm 1$, as we have  $q_1(1)=1$
and $q_1(-1)=-1$.
\end{proof}

The proof of the main result of this section depends on two key
inequalities, stated in the next two propositions.
\begin{proposition}
\label{prop:q1q0}
We have $\|q_1(x)\|\geq |q_0(x)|$ for all $x\in [-1,1]$. Equality holds
exactly when $x=\pm 1$.
\end{proposition}
\begin{proof}
The difference of (\ref{eq:q1l}) and (\ref{eq:q0}) is
$$
\|q_1(x)\|^2-|q_0(x)|^2=-3x(u(x)+v(x))-3u(x)v(x).
$$
Here, for any $x\in[-1,1]$, the summand $-3x(u(x)+v(x))$
is nonnegative by Lemma~\ref{lem:u+v} and the summand $-3u(x)v(x)$ is
nonnegative by Equation~(\ref{eq:p}). The sum is zero only when both
summands are zero, which is only possible when $x=\pm1$.
\end{proof}
\begin{proposition}
\label{prop:aq}
The functions $\alpha_j(x)$ and $q_j(x)$ satisfy
$$
2\|\alpha_1(x)q_1(x)\| \geq |\alpha_0(x)q_0(x)|
$$
on the interval $[-1,1]$. Equality is only possible when $x=0$.
\end{proposition}
\begin{proof}
Assume, by way of contradiction, that
$$
|\alpha_0(x)q_0(x)|\geq 2
\|\alpha_1(x)q_1(x)\|=\|\alpha_1(x)q_1(x)\|+\|\alpha_2(x)q_2(x)\|$$
holds for some $x\in [-1,1]\setminus \{0\}$. Then, by the triangle inequality, we also
have
$$
|\alpha_0(x)q_0(x)|\geq \|\alpha_1(x)q_1(x)+\alpha_2(x)q_2(x)\|.
$$
Using (\ref{eq:Taq}) with $n=1$ yields
$$
|\alpha_0(x)q_0(x)|\geq |x-\alpha_0(x)q_0(x)|.
$$
Since we excluded the possibility of $x=0$, we obtain that the sign of
$\alpha_0(x)q_0(x)$ must equal to the sign of $x$. Using
(\ref{eq:alphaj}) and the Vi\`ete formulas associated to the
characteristic equation (\ref{eq:char}) it is easy to derive the
following formula:
$$
\alpha_0(x)\alpha_1(x)\alpha_2(x)
=
\frac{m^2x^2}{27m^2 x^2 +e^3(1-x^2)}
$$
On the left hand side, $\alpha_1(x)\alpha_2(x)=\|\alpha_1(x)\|^2$ is
positive by Corollary~\ref{cor:a/x}. The right hand side is also
positive. We obtain that $\alpha_0(x)$ must be positive and thus the sign of
$x$ must also equal the sign of $q_0(x)$. Since we also have $3m-e\geq
0$ (see Remark~\ref{rem:e3m}), using
(\ref{eq:alphaj}) we may write
$$
|\alpha_0(x)q_0(x)|=|mx|\left|\frac{q_0(x)}{eq_0(x)+(3m-e)x}\right|=
|mx|\frac{|q_0(x)|}{e|q_0(x)|+(3m-e)|x|}.
$$
The rightmost expression can only increase if we replace $|q_0(x)|$ with
a larger number. Thus, Proposition~\ref{prop:q1q0} yields
$$
|\alpha_0(x)q_0(x)|\leq |mx|\frac{\|q_1(x)\|}{e\|q_1(x)\|+(3m-e)|x|}.
$$
Applying the triangle inequality to the denominator on the right hand
side yields
$$
|\alpha_0(x)q_0(x)|\leq
\left\|\frac{mxq_1(x)}{eq_1(x)+(3m-e)x}\right\| =\|\alpha_1(x)q_1(x)\|,
$$
which contradicts our assumptions unless
$\alpha_1(x)q_1(x)=\alpha_0(x)q_0(x)=0$, impossible for $x\neq 0$ by
Corollary~\ref{cor:a/x} and Proposition~\ref{prop:q1neq0}.
\end{proof}

As a consequence of Corollary~\ref{cor:a/x} and
Proposition~\ref{prop:q1neq0}, for $n\geq 1$ we may rewrite (\ref{eq:Taq}) as
\begin{equation}
\label{eq:Taq/x}
\frac{p_n(x)}{x}
=\frac{\|\alpha_1(x) q_1(x)^n\|}{|x|}
\left( \frac{\alpha_0(x) \frac{q_0(x)}{x}}{\left\|\frac{\alpha_1(x)}{x} q_1(x)\right\|} \left(\frac{q_0(x)}{\|q_1(x)\|}\right)^{n-1}
+\sum_{j=1}^2
\frac{\frac{\alpha_j(x)}{x}q_j(x)}{\left\|\frac{\alpha_1(x)}{x}
  q_1(x)\right\|}
\left(\frac{q_j(x)}{\|q_1(x)\|}\right)^{n-1}\right).
\end{equation}
Introducing the functions
$$
g_n(x)=\frac{\alpha_0(x) \frac{q_0(x)}{x}}{\left\|\frac{\alpha_1(x)}{x} q_1(x)\right\|}
\left(\frac{q_0(x)}{\|q_1(x)\|}\right)^{n-1},\quad
\varepsilon(x)=\frac{\frac{\alpha_1(x)}{x}q_1(x)}{\|\frac{\alpha_1(x)}{x}
  q_1(x)\|}\quad\mbox{and}
\quad \rho(x)=\frac{q_1(x)}{\|q_1(x)\|},
$$
we may rewrite (\ref{eq:Taq/x}) as
\begin{equation}
\label{eq:Taq/x-short}
\frac{p_n(x)}{x}
=\frac{\|\alpha_1(x) q_1(x)^n\|}{|x|}
\left( g_n(x)+\varepsilon(x)\cdot
\rho(x)^{n-1}+\overline{\varepsilon(x)}\cdot \overline{\rho(x)}^{n-1}\right).
\end{equation}
The next three lemmas gather properties of the functions $g_n(x),\varepsilon(x),\rho(x)$ that will be needed later for the proof of real-rootedness.
\begin{lemma}
\label{lem:gn}
For $n>1$, the function $g_n:[-1,1]\rightarrow {\mathbb R}$ is a real-valued
function satisfying $g_n(-1)=(-1)^{n-1}$, $g_n(0)=0$
and
$g_n(1)=1$. Furthermore, there exists a positive constant $c<2$ such that
$|g_n(x)|\leq c$ holds for all $x\in [-1,1]$.
\end{lemma}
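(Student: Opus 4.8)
Here is the plan. The decisive observation is that, for $n>1$, the $n$-dependence of $g_n$ is confined to a single power: putting
$$P(x):=\frac{\alpha_0(x)\,q_0(x)/x}{\bigl\|\,(\alpha_1(x)/x)\,q_1(x)\,\bigr\|},\qquad \rho_0(x):=\frac{q_0(x)}{\|q_1(x)\|},$$
one has $g_n(x)=P(x)\,\rho_0(x)^{\,n-1}$. I would first record that $P$ is a well-defined, continuous, real-valued function on $[-1,1]$: in the numerator $\alpha_0(x)$ and $q_0(x)$ are real and $q_0(x)/x$ extends continuously across $x=0$ by \eqref{eq:q0/x} (note $q_0(0)=0$), while in the denominator $\alpha_1(x)/x$ is continuous and nowhere zero by Corollary~\ref{cor:a/x} and $q_1(x)$ is nowhere zero by Proposition~\ref{prop:q1neq0}, so the denominator is continuous and strictly positive; the values at $x=\pm1$ are covered since all these blocks extend continuously there (cf.\ Lemma~\ref{lem:alphabis} and \eqref{eq:qjx=pm1}). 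Likewise $\rho_0$ is real-valued, and $|\rho_0(x)|\le1$ on $[-1,1]$ because $\|q_1(x)\|\ge|q_0(x)|$ by Proposition~\ref{prop:q1q0}. Hence $g_n$ is real-valued and $|g_n(x)|\le|P(x)|$ for all $x$ and all $n>1$. Moreover $\rho_0(0)=q_0(0)/\|q_1(0)\|=0$ by \eqref{eq:qjx=0}, and $n-1\ge1$, so $g_n(0)=P(0)\cdot0^{\,n-1}=0$.

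For the values at $\pm1$ I would substitute the boundary data $q_j(\pm1)=\pm1$ from \eqref{eq:qjx=pm1} and $\alpha_j(\pm1)=1/3$ from Lemma~\ref{lem:alphabis}; this gives $P(\pm1)=1$ and $\rho_0(\pm1)=\pm1$, hence $g_n(-1)=(-1)^{n-1}$ and $g_n(1)=1$.

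It remains to produce the constant $c$. Since $P$ is continuous on the compact interval $[-1,1]$ and independent of $n$, the number $c:=\max_{x\in[-1,1]}|P(x)|$ is finite and $n$-independent, and $c\ge|P(1)|=1>0$; then $|g_n(x)|=|P(x)|\,|\rho_0(x)|^{\,n-1}\le|P(x)|\le c$ for all $x$ and all $n>1$, so everything reduces to the pointwise bound $|P(x)|<2$ on $[-1,1]$. For $x\neq0$ we have $|P(x)|=|\alpha_0(x)q_0(x)|/\|\alpha_1(x)q_1(x)\|$, and Proposition~\ref{prop:aq} gives exactly $|P(x)|\le2$, with strict inequality for $x\neq0$. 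At the single point $x=0$, where Proposition~\ref{prop:aq} only says $0=0$ (both $\alpha_0q_0$ and $\alpha_1q_1$ vanish), I would evaluate the resulting $0/0$ limit by hand: using \eqref{eq:q0/x}, \eqref{eq:alpha1/x} and \eqref{eq:qjx=0} one computes $P(0)=(e-2m)/m$, and since the face count of this section together with Lemma~\ref{lem:em} forces $3\le e\le2m+1<4m$ (recall $m\ge1$), one gets $|P(0)|=|e-2m|/m<2$. Therefore $|P(x)|<2$ on all of $[-1,1]$, so $c<2$, and the lemma follows.

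The only genuinely delicate step is this analysis at $x=0$: away from the origin both estimates used here ($\|q_1(x)\|\ge|q_0(x)|$ and $2\|\alpha_1q_1\|\ge|\alpha_0q_0|$) are already in hand, but at the origin one must resolve the indeterminacy in $P$ and invoke the combinatorial inequality $3\le e\le2m+1$. A subsidiary point worth not skipping is the continuity of $P$ at $x=0$, so that $c=\max|P|$ is genuinely below $2$ rather than merely $\le2$ off a point; this holds because $\alpha_0$, the continuous extensions of $q_0(x)/x$ and $\alpha_1(x)/x$, $q_1$, and $\|\cdot\|$ are all continuous and the denominator never vanishes on $[-1,1]$.
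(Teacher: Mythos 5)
Your proof is correct and follows essentially the same route as the paper's: boundary values by direct substitution, and the bound via Propositions~\ref{prop:q1q0} and~\ref{prop:aq} combined with continuity and compactness of $[-1,1]$. The only (harmless) variation is that you bound the $n$-independent prefactor $P(x)$ — which forces the extra evaluation $P(0)=(e-2m)/m$ and the inequality $0<e\le 2m+1<4m$ — thereby getting a constant $c$ uniform in $n$, whereas the paper bounds $g_n$ itself (using $g_n(0)=0$) and lets $c$ depend on $n$, which is all its application needs.
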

\begin{proof}
The function $g_n(x)$ is continuous and real-valued, because the same holds for the functions $\alpha_0(x)$ and $q_0(x)/x$; see (\ref{eq:qj}) and Lemma \ref{lem:alphabis}.
Direct substitution (in Equations (\ref{eq:qjx=0}) and (\ref{eq:qjx=pm1}), using Lemma~\ref{lem:alphabis} and Equations
(\ref{eq:q0/x}) and (\ref{eq:alpha1/x}) ) yields
$g_n(0)=0$, $g_n(1)=1$ and $g_n(-1)=(-1)^{n-1}$.
For $x\neq 0$, we have
$$
|g_n(x)|=\frac{\alpha_0(x)q_0(x)}{\left\|\alpha_1(x)q_1(x)\right\|}
\cdot \left(\frac{|q_0(x)|}{\|q_1(x)\|}\right)^{n-1}
$$
and the inequality is a direct consequence of Propositions~\ref{prop:q1q0} and
\ref{prop:aq} as $|g_n(0)|<2$ (using compactness of $[-1,1]$).
\end{proof}
\begin{lemma}
\label{lem:rho}
The function $\rho: [-1,1]\rightarrow {\mathbb C}$ is a continuous
function whose range is the upper half of the unit circle, centered at
the origin. $\rho(x)$ is real if and only if  $x=\pm 1$, where we have
$\rho(-1)=-1$ and $\rho(1)=1$.
\end{lemma}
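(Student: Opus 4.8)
The strategy is to reduce the whole statement to an analysis of $\operatorname{Im}(q_1(x))$, which has a clean closed form, and then to obtain the surjectivity onto the semicircle as a formal consequence of connectedness. First I would check that $\rho$ is well-defined and continuous: on $[-1,1]$ all radicands occurring in \eqref{eq:ux} and \eqref{eq:vx} are real and nonnegative (namely $1-x^2\ge 0$ and $m^2x^2+e^3(1-x^2)/27\ge 0$), so $u(x)$ and $v(x)$ are honest real-valued continuous functions on $[-1,1]$, and hence so is $q_1(x)=x+\omega u(x)+\omega^2 v(x)$. By Proposition~\ref{prop:q1neq0} we have $q_1(x)\ne 0$ throughout $[-1,1]$, so $\rho(x)=q_1(x)/\|q_1(x)\|$ is a well-defined continuous map from $[-1,1]$ into the unit circle.

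Next I would compute the imaginary part. Since $\omega=e^{i2\pi/3}=-\tfrac12+\tfrac{\sqrt3}{2}i$ and $\omega^2=\overline{\omega}$, equation \eqref{eq:qj} gives
$$
q_1(x)=\left(x-\tfrac12\bigl(u(x)+v(x)\bigr)\right)+\tfrac{\sqrt3}{2}\bigl(u(x)-v(x)\bigr)\,i,
$$
which also re-confirms $q_2(x)=\overline{q_1(x)}$, so that $\operatorname{Im}(q_1(x))=\tfrac{\sqrt3}{2}\bigl(u(x)-v(x)\bigr)$. It remains to pin down the sign of $u(x)-v(x)$. For real $u,v$ one always has $u^2+uv+v^2=(u+v/2)^2+3v^2/4\ge 0$, while Lemma~\ref{lem:u3-v3} says $u(x)^3-v(x)^3=(u(x)-v(x))(u(x)^2+u(x)v(x)+v(x)^2)$ is strictly positive on $(-1,1)$; together these force $u(x)-v(x)>0$ on $(-1,1)$. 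At $x=\pm1$ we have $u(x)=v(x)=0$ by \eqref{eq:uvx=pm1}. Hence $\operatorname{Im}(q_1(x))>0$ on $(-1,1)$ and $\operatorname{Im}(q_1(x))=0$ at $x=\pm1$.

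From this, everything else follows quickly. The image of $\rho$ is contained in the closed upper semicircle $S=\{z:|z|=1,\ \operatorname{Im}(z)\ge 0\}$, and $\rho(x)$ is real precisely when $\operatorname{Im}(q_1(x))=0$, i.e.\ precisely when $x=\pm1$; and \eqref{eq:qjx=pm1} gives $q_1(-1)=-1$, $q_1(1)=1$, whence $\rho(-1)=-1$ and $\rho(1)=1$. Finally, since $[-1,1]$ is connected and $\rho$ is continuous, $\rho([-1,1])$ is a connected subset of $S$; identifying $S$ with $[0,\pi]$ via $\theta\mapsto e^{i\theta}$ (with $1\leftrightarrow 0$ and $-1\leftrightarrow \pi$), a connected subset containing both endpoints $\rho(1)=1$ and $\rho(-1)=-1$ must be all of $S$, so $\rho$ maps onto the upper half of the unit circle. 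This proves every assertion of the lemma.

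There is no deep obstacle here; the one place that wants a little care is the imaginary-part step, where one must make sure $u$ and $v$ really are real-valued on the \emph{entire} interval $[-1,1]$ (so that the displayed decomposition of $q_1$ is genuinely into real and imaginary parts and $q_2=\overline{q_1}$), and that they do not vanish simultaneously in the open interval. Both points are immediate from the formulas \eqref{eq:ux}--\eqref{eq:vx} and from Lemma~\ref{lem:u3-v3}, already established above.
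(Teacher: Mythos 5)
Your proof is correct and takes essentially the same route as the paper: both reduce the lemma to the observation that $\operatorname{Im}(q_1(x))$ is a positive multiple of $u(x)-v(x)$, which is strictly positive on $(-1,1)$ and vanishes at $x=\pm 1$. The only differences are cosmetic --- the paper reads $u(x)>v(x)$ directly off \eqref{eq:ux} and \eqref{eq:vx} (monotonicity of the cube root), while you deduce it from Lemma~\ref{lem:u3-v3} and positivity of $u^2+uv+v^2$, and you make explicit the connectedness/endpoint argument for surjectivity onto the upper semicircle that the paper leaves implicit (your factor $\tfrac{\sqrt3}{2}$ is in fact the correct one, versus the paper's $\sqrt3$; the sign is all that matters).
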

\begin{proof}
Clearly $\rho$ is continuous and we must have $\|\rho(x)\|=1$ for all
$x\in [-1,1]$. The imaginary part of $q_1(x)$ is $\sqrt{3}\cdot
(u(x)-v(x))\cdot i$ and $u(x)-v(x)$ is strictly positive on $(-1,1)$,
see (\ref{eq:ux}) and (\ref{eq:vx}).
\end{proof}
\begin{lemma}
\label{lem:epsilon}
The function $\varepsilon : [-1,1]\rightarrow {\mathbb C}$ is continuous
and its range is a proper subset of the unit circle, centered at the
origin. The real number $-1$ is not part of the range. If $e=3m$, then
$\varepsilon(x)=1$ for all $x\in [-1,1]$. If $e\neq
3m$, then $\varepsilon(x)$ is real only when $x\in \{-1,0,1\}$ and, for all
other values of $x$, the  sign of the imaginary part of $\varepsilon(x)$
is the same as the sign of $x$.
\end{lemma}
\begin{proof}
Clearly $\varepsilon$ is continuous and satisfies
$\|\varepsilon(x)\|=1$. Direct substitution (into (\ref{eq:qjx=0}),
(\ref{eq:qjx=pm1})  and (\ref{eq:alpha1/x})) yields $\varepsilon(-1)=1$,
$\varepsilon(1)=1$ and $\varepsilon(0)=1$. In the case when $e=3m$,
we have
$$
\frac{\alpha_1(x)}{x}\cdot q_1(x) =\frac{m}{e q_1(x)}\cdot q_1(x)=\frac{1}{3}
$$
and $\varepsilon$ is identically $1$.  Assume from now on that $e\neq
3m$. Assume also that $x\not\in \{-1,0,1\}$ and
$\varepsilon(x)$ is real.
Substituting (\ref{eq:alphaj})
into the definition of $\varepsilon(x)$ we obtain
$$
\frac{mq_1(x)}{e(q_1(x)-x)+3mx}=r
$$
for some $r\in {\mathbb R}$,
which may be rearranged as
$$
(m-er)q_1(x)=r(3m-e)x.
$$
On the right hand side we have a real number, whereas on the left hand
side $m-er$ is real but $q_1(x)$ is not real for $x\in (-1,1)\setminus
\{0\}$. The two sides can only be equal, if $m-er=0$ but then $x$ must
be zero, in contradiction with our assumptions.

Assume $x\in (0,1)$. We have seen in the proof of Lemma~\ref{lem:rho} that
the imaginary part of $q_1(x)$ is positive.
Since $3m-e$ is positive, the argument of $e\cdot q_1(x)+(3m-e)x$ is
smaller than the argument of $q_1(x)$, but the imaginary part of
$e\cdot q_1(x)+(3m-e)x$ is also positive. We obtain that the argument of
the quotient
$$\frac{\alpha_1(x)}{x}\cdot q_1(x)=\frac{mq_1(x)}{e\cdot
  q_1(x)+(3m-e)x}$$
belongs to the interval $(0,\pi)$ and the imaginary part of
$\varepsilon(x)$ is positive. A completely analogous reasoning may be
used to prove that the imaginary part $\varepsilon(x)$ is negative for
negative $x$.
\end{proof}

\begin{proof}[Proof of Theorem \ref{thm:pn}.]
We only need to show the statement for $n\geq 3$. Since we have
$p_n(0)=0$, it suffices to show that the polynomial $p_n(x)/x$ has
$n-1$ distinct roots in the interval $[-1,1]$. Consider the expression
of $p_n(x)/x$ given in (\ref{eq:Taq/x-short}). It suffices to show
that the function
$$
g_n(x)
+\varepsilon(x)\cdot \rho(x)^{n-1}
+\overline{\varepsilon(x)}\cdot \overline{\rho(x)}^{n-1}
$$
has at least $n-1$ zeroes in the interval $[-1,1]$. By
Lemma~\ref{lem:gn}, the graph of the continuous function $-g_n(x)$
is in between the horizontal lines $y=-c$ and $y=c$ for some $0<c<2$. 
As
 $\varepsilon(x)\cdot
\rho(x)^{n-1}$ is a unit complex number,
$$f_n(x):=\varepsilon(x)\cdot
\rho(x)^{n-1}+\overline{\varepsilon(x)}\cdot \overline{\rho(x)}^{n-1}$$
equals twice
the cosine of the argument of $\varepsilon(x)\cdot \rho(x)^{n-1}$.
As a consequence, the graph of the continuous real-valued function
$f_n(x)$ is between the horizontal lines $y=-2$ and $y=2$. At the
endpoints of the interval $[-1,1]$ we have $f(-1)=2\cdot (-1)^{n-1}$ and
$f(1)=2$. It suffices to prove that there are $n-2$ real numbers
$x_1,x_2,\ldots,x_{n-2}$ satisfying $-1<x_1<\cdots<x_{n-2}<1$ and
$f(x_j)=2\cdot (-1)^{n-1-j}$ for $j=1,\ldots,n-2$. Introducing $x_0=-1$
and $x_{n-1}=1$ we can then say that, for each $j\in \{1,\ldots, n-1\}$,
  in each interval $(x_{j-1},x_j)$,  the graph of $f_n(x)$ enters and
leaves the region between $y=-c$ and $y=c$, and crosses the graph of
$-g_n(x)$ at least once, where we have a root of  $f_n(x)+g_n(x)$.

Consider first the special case when $e=3m$. By Lemma~\ref{lem:epsilon}
$\varepsilon(x)$ is identically $1$ and
$\varepsilon(x)\rho(x)^{n-1}=\rho(x)^{n-1}$. By Lemma~\ref{lem:rho}, as
$x$ moves from $-1$ to $1$, the argument of $\rho$ continuously changes
from $\pi$ to $0$. We may select $x_j$ as the least real number for
which the argument of $\rho(x_j)$ is $\frac{n-1-j}{n-1}\pi$. Then the
argument of $\rho(x)^{n-1}$ is  $(n-1-j)\pi$ and we have $f(x_j)=2\cdot
(-1)^{n-1-j}$. Because of the continuity of $\rho$ we must also have
$-1<x_1<\cdots<x_{n-2}<1$.

Consider finally the case when $e\neq 3m$. For $j=0,\ldots,n-1$, let
$z_j$ be the least real number such that the argument of $\rho(z_j)$
is $\frac{n-1-j}{n-1}\pi$. Clearly we have $-1=z_0<z_1<\ldots<z_{n-1}\leq 1$.
Let us set $x_0=-1$ and $x_{n-1}=1$ . Let us denote by $k$ the index for
which we have $z_k<0\leq z_{k+1}$. For $j= 1,\ldots, k$ we will
show that we may select $x_j$ as an element of the interval
$(z_{j-1},z_j)$  and for $j=k+1,\ldots,n-2$ we will show that we may
select $x_j$ as an element of the interval $(z_j,z_{j+1})$. Since this
selection automatically guarantees
$-1=x_0<x_1<\ldots<x_{n-2}<x_{n-1}=1$, we only need to show that the
argument of $\varepsilon(x_j)\rho(x_j)^{n-1}$ is $(n-1-j)\pi$ for the $x_j$
we selected.

{\bf Case 1:} $1\leq j\leq k$, implying $z_j < 0$. By
Lemma~\ref{lem:epsilon}, the imaginary part of
$\varepsilon(x)$ is negative for all $x\in (z_{j-1},z_j)$, in other words,
  the argument of $\varepsilon(x)$ belongs to the interval
  $(-\pi,0)$ and the argument of $\varepsilon(x)^{-1}$ belongs to the
  interval $(0,\pi)$. The graph of the function
  $(n-1-j)\pi+\arg(\varepsilon(x)^{-1}))$ stays strictly between the
  horizontal lines $y=(n-1-j)\pi$ and $y=(n-j)\pi$. As $x$ moves from $z_{j-1}$
  to $z_j$, the argument of $\rho(x)^{n-1}$ moves from $(n-1-j+1)\pi$
  down to $(n-1-j)\pi$, in a continuous fashion. Thus the graph of
  $\arg(\rho(x)^{n-1})$ crosses the graph of
  $(n-1-j)\pi+\arg(\varepsilon(x)^{-1})$ at some $x_j\in
  (z_{j-1},z_j)$. For this $x_j$, the argument of
  $\varepsilon(x_j)\rho(x_j)^{n-1}$ is $(n-1-j)\pi$.

{\bf Case 2:} $k+1\leq j\leq n-2$, implying $z_j\geq 0$. By
Lemma~\ref{lem:epsilon}, the imaginary part of
$\varepsilon(x)$ is positive for all $x\in (z_{j},z_{j+1})$. The handling of
  this case is left to the reader as it is completely analogous to the
  previous case.
\end{proof}

\section{Generalized Tchebyshev polynomials of the higher kind}
\label{sec:higher}

As a direct generalization of the construction introduced
in~\cite{Hetyei-Tch}, we may introduce generalized Tchebyshev
polynomials of the higher kind as follows.
\begin{definition}
\label{def:Tj}
For
$j\in\{2,\ldots,k+1\}$, let us define $U^{L,j}:{\mathbb R}[x]\rightarrow
{\mathbb R}[x]$
as the unique linear map satisfying $U^{L,j}(x^n)=0$ for $n\leq j-2$ and having
the following property: given any simplicial complex
$K$ and any generalized Tchebyshev triangulation $K'$ of $K$, induced by $L$, we
have
\begin{equation}
\label{eq:Tja}
U^{L,j} (F(K,x))=\sum_{\sigma\in K, |\sigma|=j-1} F(\link_{K'}(\sigma),x).
\end{equation}
We define the {\em generalized Tchebyshev polynomial $U^{L,j}_n(x)$ of
  the $j$th kind} by
$$
U^{L,j}_n(x)=2^{1-j}\cdot (j-1)! U^{L,j}(x^{n+j-1}).
$$
\end{definition}
Similarly to the map $T^L$, the linear maps $U^{L,j}$ are well-defined,
as a consequence of Theorem~\ref{thm:samefxy}.
To see this, it is enough to show that
$\sum_{\sigma\in K, |\sigma|=j-1} f(\link_{K'}(\sigma),x)$ depends linearly on $f(K,x)$. By Theorem~\ref{thm:samefxy}, there are linear functionals $l_{i,p}$ such that $f_c(K';x,y)=\sum_{i,p}l_{i,p}(f(K))x^iy^p$. Now, any face $\tau\in K'$ with $|V(K)\cap \tau|=i+j-1$ and $|(V(K')\setminus V(K))\cap \tau|=p$
contributes $\binom{i+j-1}{j-1}$ to the coefficient of $x^{i}y^p$ in the polynomial $\sum_{\sigma\in K, |\sigma|=j-1} f_c(\link_{K'}(\sigma);x,y)$. Thus,
$$\sum_{\sigma\in K, |\sigma|=j-1} f(\link_{K'}(\sigma),z)
=\frac{1}{(j-1)!}\left. \frac{\partial ^{j-1}}{\partial x^{j-1}}
\left(\sum_{i,p}l_{i,p}(f(K))x^iy^p\right)\right|_{\substack{x=y=z}}.
$$

\begin{example}
Let $L$ be the path with two edges, considered in
Examples~\ref{ex:2triang} and \ref{ex:Tch}. Using Theorem~\ref{thm:samefxy}, as an immediate
generalization of~\cite[Proposition 4.4]{Hetyei-Tch} we obtain that the
polynomials $U^{L,2}_n(x)$ are the ordinary Tchebyshev polynomials of the second
kind.
\end{example}
In general, to compute $U^{L,j}$, by linearity it suffices to find its
value when $K$ is an $(n-1)$-dimensional simplex, where $n\geq
j-1$. When $K$ is an $(n-1)$-dimensional simplex, we have
$$
F(K,z)=\left(\frac{z+1}{2}\right)^n\quad\mbox{and}
$$
$$
\sum_{\sigma\in K, |\sigma|=j-1} F(\link_{K'}(\sigma),z)
=\frac{1}{(j-1)!}\left. \frac{\partial ^{j-1}}{\partial x^{j-1}} f_n
\left(x,y\right)\right|_{\substack{x=(z-1)/2\\ y=(z-1)/2}}.
$$
As a consequence,  $2^{1-j}(j-1)! U^{L,j}$ is given by
\begin{equation}
\label{eq:ULja}
2^{1-j}(j-1)! U^{L,j}\left(\left(\frac{z+1}{2}\right)^n\right)
=2^{1-j} \left. \frac{\partial ^{j-1}}{\partial x^{j-1}}
f_n\left(x,y\right)\right|_{\substack{x=(z-1)/2\\ y=(z-1)/2}}.
\end{equation}
Since
$$
z^{n+j-1}=\left(2\cdot \frac{z+1}{2}-1\right)^{n+j-1}
=\sum_{k=0}^{n+j-1} \binom{n+j-1}{k}(-1)^{n+j-1-k} 2^k
\left(\frac{z+1}{2}\right)^k,
$$
Equation \eqref{eq:ULja} is equivalent to
\begin{equation}
\label{eq:ULj}
U^{L,j}_n(z)=
\sum_{k=j-1}^{n+j-1} \binom{n+j-1}{k}(-1)^{n+j-1-k} 2^{1-j+k}
\left. \frac{\partial ^{j-1}}{\partial x^{j-1}}
f_k\left(x,y\right)\right|_{\substack{x=(z-1)/2\\ y=(z-1)/2}}.
\end{equation}
In analogy to the derivation of (\ref{eq:Tf}), we may use \eqref{eq:ULj}
to obtain the following generating function formula for the
polynomials $U^{L,j}_n(x)$.
\begin{equation}
\label{eq:TUf}
\sum_{n=0}^{\infty} U^{L,j}_n(z) t^n
=
\frac{2^{1-j}}{1+t}
\left. \frac{\partial^{j-1}}{\partial x^{j-1}}
f\left(x,y,\frac{2t}{1+t}\right)\right|_{\substack{x=(z-1)/2\\ y=(z-1)/2}}.
\end{equation}
In analogy to Corollary~\ref{cor:Tcross}, a completely analogous
computation has the following consequence.
\begin{corollary}
\label{cor:Ucross}
Let $\diamondsuit(n+j-1)$ be the boundary complex of
an $(n+j-1)$-dimensional cross-polytope, and let $\diamondsuit(n+j-1)'$ be
a Tchebyshev triangulation of it, induced by $L$. Then we have
$$
U^{L,j}_n(x)=2^{1-j}\cdot (j-1)! \sum_{\sigma\in \diamondsuit(n+j-1),
  |\sigma|=j-1} F(\link_{\diamondsuit(n+j-1)'}(\sigma),x).
$$
\end{corollary}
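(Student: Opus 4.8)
The plan is to imitate the proof of Corollary~\ref{cor:Tcross}, reducing the statement to the single fact that the $F$-polynomial of a cross-polytope boundary complex is a monomial, and then feeding this into the defining relation \eqref{eq:Tja} of the linear map $U^{L,j}$. First I would record that $F(\diamondsuit(N),x)=x^N$ for every $N\ge 0$: this is immediate from the definition of the $F$-polynomial, since the boundary complex $\diamondsuit(N)$ of the $N$-dimensional cross-polytope has exactly $2^i\binom{N}{i}$ faces with $i$ vertices (choose $i$ of the $N$ antipodal pairs and one representative from each), whence
$$F(\diamondsuit(N),x)=\sum_{i=0}^N 2^i\binom{N}{i}\left(\frac{x-1}{2}\right)^i=\sum_{i=0}^N\binom{N}{i}(x-1)^i=x^N.$$

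Next I would specialize \eqref{eq:Tja} to $K=\diamondsuit(n+j-1)$ and $K'=\diamondsuit(n+j-1)'$. By the previous step its left-hand side is $U^{L,j}(x^{n+j-1})$, so
$$U^{L,j}(x^{n+j-1})=\sum_{\sigma\in\diamondsuit(n+j-1),\,|\sigma|=j-1}F(\link_{\diamondsuit(n+j-1)'}(\sigma),x),$$
and multiplying by $2^{1-j}(j-1)!$ and using $U^{L,j}_n(x)=2^{1-j}(j-1)!\,U^{L,j}(x^{n+j-1})$ from Definition~\ref{def:Tj} gives the claimed identity. The one routine point to verify along the way is that each $\sigma$ occurring in the sum really is a face of $\diamondsuit(n+j-1)'$, so that the link makes sense; this is clear since $\dim\sigma=j-2\le k-1<k$ and the subdivision steps defining $\diamondsuit(n+j-1)'$ only delete faces containing a $k$-dimensional face.

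I do not expect a genuine obstacle: the work is carried by the well-definedness of $U^{L,j}$, already established just after Definition~\ref{def:Tj} from Theorem~\ref{thm:samefxy}, together with the monomial identity above. If one wished to avoid citing \eqref{eq:Tja} in this form, the same conclusion would follow by extracting the coefficient of $t^n$ from the generating-function formula \eqref{eq:TUf} and comparing with \eqref{eq:if} applied to $\diamondsuit(n+j-1)'$, paralleling the passage from \eqref{eq:Tf} to Corollary~\ref{cor:Tcross}; but the route through \eqref{eq:Tja} is shorter.
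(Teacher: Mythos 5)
Your argument is correct, but it is not the computation the paper has in mind. The paper proves Corollary~\ref{cor:Ucross} by ``a completely analogous computation'' to the one that yields Corollary~\ref{cor:Tcross}: expand $z^{n+j-1}$ in the basis $\left(\frac{z+1}{2}\right)^k$, use \eqref{eq:ULja}--\eqref{eq:ULj} together with the analogue of \eqref{eq:fIf} to write everything in terms of the polynomials $\If_i$, collapse the resulting double sum by the binomial identity, and then recognize the surviving coefficients $2^i\binom{n+j-1}{i}$ as the face numbers of $\diamondsuit(n+j-1)$ via \eqref{eq:if}. You bypass all of that: since $F(\diamondsuit(N),x)=x^N$ (your face count $2^i\binom{N}{i}$ is right, and the paper itself uses this fact in the proof of Theorem~\ref{thm:TLrec}), the defining relation \eqref{eq:Tja} applied to $K=\diamondsuit(n+j-1)$, $K'=\diamondsuit(n+j-1)'$ gives $U^{L,j}(x^{n+j-1})$ on the left and exactly the claimed sum of link $F$-polynomials on the right, and multiplying by $2^{1-j}(j-1)!$ finishes, with the only substantive input being the well-definedness of $U^{L,j}$ established after Definition~\ref{def:Tj} from Theorem~\ref{thm:samefxy}; your side remark that each $(j-2)$-dimensional $\sigma$ survives in $K'$ because only faces containing a subdivided $k$-face are removed is also correct. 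What each approach buys: yours is shorter and more conceptual (and would prove Corollary~\ref{cor:Tcross} in one line by the same specialization $T^L(x^n)=F(\diamondsuit(n)',x)$), while the paper's route keeps the $\If_i$ and generating-function machinery in play, which it reuses immediately afterwards, e.g.\ in \eqref{eq:TUf} and in the proof of the recurrence Theorem~\ref{thm:ULrec}.
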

Using this corollary, it is easy to prove the following analogue of
Theorem~\ref{thm:gtch1}.

\begin{theorem}
\label{thm:gtch2}
For all $n\geq 0$, the polynomials $U^{L,j}_n(x)$ have the following properties:
\begin{enumerate}
\item $U^{L,j}_n(x)$ is a polynomial of degree $n$;
\item $(-1)^n U^{L,j}_n(-x)=U^{L,j}_n(x)$;
\item all real roots of $U^{L,j}_n(x)$ belong to the interval $[-1,1]$.
\end{enumerate}
\end{theorem}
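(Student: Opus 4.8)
The plan is to imitate the proof of Theorem~\ref{thm:gtch1} step by step, using Corollary~\ref{cor:Ucross} in the role that Corollary~\ref{cor:Tcross} played there. By Corollary~\ref{cor:Ucross},
$$
U^{L,j}_n(x)=2^{1-j}(j-1)!\sum_{\substack{\sigma\in\diamondsuit(n+j-1)\\ |\sigma|=j-1}}F\bigl(\link_{\diamondsuit(n+j-1)'}(\sigma),x\bigr),
$$
a positive multiple of a sum of $F$-polynomials of the links, inside a generalized Tchebyshev triangulation $\diamondsuit(n+j-1)'$ of the cross-polytope boundary, of the $(j-2)$-dimensional faces of $\diamondsuit(n+j-1)$. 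So everything reduces to understanding these links.

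The first --- and I expect the only genuinely nontrivial --- step is to show that every link $\link_{\diamondsuit(n+j-1)'}(\sigma)$ occurring in this sum is a homology $(n-1)$-sphere. The complex $\diamondsuit(n+j-1)'$ is a triangulation of the boundary of a cross-polytope, hence of $S^{n+j-2}$: each subdivision step in Definition~\ref{def:gtch} replaces a $k$-face and its cofaces by a complex with the same geometric realization, so the underlying space never changes. Since $\diamondsuit(n+j-1)'$ thus triangulates a topological sphere, the link of each of its faces is a homology sphere of complementary dimension (the standard local-homology computation in a triangulated manifold, see e.g.\ \cite{Munkres}). Because $|\sigma|=j-1$ with $j\le k+1$, the face $\sigma$ has dimension $j-2<k$, so it is never subdivided (recall $L$ adds no vertices to boundaries) and remains a face of $\diamondsuit(n+j-1)'$; as $\diamondsuit(n+j-1)'$ is pure of dimension $n+j-2$, the link of $\sigma$ has dimension $n-1$ and is therefore a homology $(n-1)$-sphere. (Alternatively, staying within the combinatorial machinery of the paper, observation~(*) in the proof of Theorem~\ref{thm:samefxy} identifies this link with the link of $\sigma$ in the generalized Tchebyshev triangulation of the closed star $\sigma*\link_{\diamondsuit(n+j-1)}(\sigma)$, a subdivision of a combinatorial ball, and then Lemma~\ref{l:link} applies; the global argument above is just shorter.)

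Granting this, each summand is the $F$-polynomial of a homology $(n-1)$-sphere, so by the remark following Theorem~\ref{thm:gtch1} (and formula~\eqref{eq:TLh} in its proof) it equals
$$
F\bigl(\link_{\diamondsuit(n+j-1)'}(\sigma),x\bigr)=\frac{1}{2^n}\sum_{i=0}^{n}h^{\sigma}_i\,(x-1)^i(x+1)^{n-i},
$$
where $(h^{\sigma}_0,\dots,h^{\sigma}_n)$ is the $h$-vector of that link, each $h^{\sigma}_i\ge 0$, $h^{\sigma}_0=h^{\sigma}_n=1$, and $h^{\sigma}_i=h^{\sigma}_{n-i}$ by the Dehn--Sommerville equations. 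All three assertions then follow by summing over $\sigma$. The leading coefficient of each summand is $2^{-n}\sum_i h^{\sigma}_i=2^{-n}f_{n-1}(\link_{\diamondsuit(n+j-1)'}(\sigma))>0$, so no cancellation occurs and $\deg U^{L,j}_n=n$ (for $n=0$, $U^{L,j}_n$ is a positive constant). Substituting $-x$ for $x$ in the displayed expansion and using $h^{\sigma}_i=h^{\sigma}_{n-i}$ gives $(-1)^nF(\link_{\diamondsuit(n+j-1)'}(\sigma),-x)=F(\link_{\diamondsuit(n+j-1)'}(\sigma),x)$ for every $\sigma$, hence $(-1)^nU^{L,j}_n(-x)=U^{L,j}_n(x)$. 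Finally, for $x\ge 1$ every term of the displayed sum is nonnegative while the $i=0$ term is $(x+1)^n>0$, so each summand, and therefore $U^{L,j}_n(x)$, is strictly positive on $[1,\infty)$; by the symmetry just established $U^{L,j}_n$ is then nonzero on $(-\infty,-1]$ as well. Thus every real root of $U^{L,j}_n(x)$ lies in $(-1,1)$ --- slightly more than the stated $[-1,1]$. The crux is entirely the sphere-ness of the links in the second paragraph; once that is in hand, the rest is the same bookkeeping as in Theorem~\ref{thm:gtch1}.
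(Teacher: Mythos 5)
Your proposal is correct and follows essentially the route the paper intends: Corollary~\ref{cor:Ucross}, plus the observation (recorded in the remark after the proof of Theorem~\ref{thm:gtch1}) that the degree, symmetry and root-location arguments via the expansion \eqref{eq:TLh} apply to the $F$-polynomial of any homology sphere, combined with the fact that each $\link_{\diamondsuit(n+j-1)'}(\sigma)$ is a homology $(n-1)$-sphere because $\diamondsuit(n+j-1)'$ still triangulates $S^{n+j-2}$ and $\sigma$, having dimension $j-2<k$, survives the subdivision. The paper states only that the theorem follows ``easily'' from Corollary~\ref{cor:Ucross}, and your write-up supplies exactly the details it leaves implicit.
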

Theorem~\ref{thm:gtch2} naturally inspires the question: which
triangulations $L$ induce Tchebyshev polynomials of the higher kind having
only real roots? We postpone the study of this question to a future
occasion. Here we only wish to highlight one important observation that may
help handle this problem in complete analogy of the same question for
the generalized Tchebyshev polynomials of the first kind: as it is the
case for the ordinary Tchebyshev polynomials,  the polynomials
$U^{L,j}_n(x)$ satisfy the same recurrence as the polynomials $T^L_n(x)$.
\begin{theorem}
\label{thm:ULrec}
For all $n\geq k+1$, the polynomials $U^{L,j}_n(x)$ satisfy a recurrence of the
form
$$
U^{L,j}_n(x)=\sum_{\ell=1}^{k+1} p^L_{\ell}(x) U^{L,j}_{n-\ell}(x).
$$
Here each $p^L_{\ell}(x)$ is a polynomial of $x$ and it equals to the
coefficient of $t^{\ell}$ in $(-2t)^{k+1}r_L\left(-\frac{1+t}{2t},
-\frac{1+x}{2}\right)$.
\end{theorem}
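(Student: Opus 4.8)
The plan is to run the argument of Theorem~\ref{thm:TLrec} once more, after observing that the polynomials $U^{L,j}_n(x)$ are manufactured from a sequence that obeys exactly the recurrence underlying the $T^L_n(x)$.

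First I would set $G_n(z):=2^{1-j}\left.\dfrac{\partial^{j-1}}{\partial x^{j-1}}f_n(x,y)\right|_{x=y=(z-1)/2}$, so that, by \eqref{eq:ULja}, $G_n(z)$ is the image of $\bigl(\tfrac{z+1}{2}\bigr)^n$ under the map $U^{L,j}$; thus $G_n$ is to $U^{L,j}$ what $\hat f_n(z):=f_n\bigl(\tfrac{z-1}{2},\tfrac{z-1}{2}\bigr)$ is to $T^L$. The one thing that genuinely has to be seen is that the coefficients $(-1)^{k+1-|\sigma|}(1+y)^{|\sigma\cap V(\Int(L))|}$ in the recurrence of Proposition~\ref{prop:fnxy} do not involve $x$. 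Applying $\partial^{j-1}/\partial x^{j-1}$ to that recurrence, then specializing $x=y=(z-1)/2$ and dividing by $2^{j-1}$, therefore shows that for every $n\ge k+1$
$$
G_n(z)=\sum_{\sigma\in L\setminus\partial(L)}(-1)^{k+1-|\sigma|}\Bigl(\tfrac{z+1}{2}\Bigr)^{|\sigma\cap V(\Int(L))|}G_{n-k-1+|\sigma\cap V(\partial(L))|}(z),
$$
which is precisely the recurrence satisfied by $\hat f_n(z)$ (Proposition~\ref{prop:fnxy} evaluated at $x=y=(z-1)/2$); the two sequences differ only in their initial terms for $n\le k$.

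Then I would pass to generating functions. With $G(s):=\sum_{n\ge 0}G_n(z)s^n$, the displayed recurrence together with the identity
$$
1-\sum_{\sigma\in L\setminus\partial(L)}(-1)^{k+1-|\sigma|}(1+y)^{|\sigma\cap V(\Int(L))|}s^{k+1-|\sigma\cap V(\partial(L))|}=-(-s)^{k+1}r_L\bigl(-\tfrac1s,-1-y\bigr),
$$
already established in the proof of Proposition~\ref{prop:fgen} (take $1+y=(z+1)/2$), yields $G(s)=\Psi(s,z)/\bigl(-(-s)^{k+1}r_L(-\tfrac1s,-\tfrac{1+z}{2})\bigr)$ for some polynomial $\Psi$ of degree $\le k$ in $s$ --- the very denominator that governs $\sum_n\hat f_n(z)s^n$, and hence the recurrence of $T^L_n$. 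Since $G(s)=2^{1-j}\partial_x^{j-1}f(x,y,s)|_{x=y=(z-1)/2}$, equation \eqref{eq:TUf} (obtained from \eqref{eq:ULj} exactly as \eqref{eq:Tf} is obtained from \eqref{eq:TL1}) expresses $\sum_{n\ge 0}U^{L,j}_n(z)t^n$ as $G\bigl(\tfrac{2t}{1+t}\bigr)$ up to an elementary factor $t^a/(1+t)$ with $a\le 0$. Substituting $s=2t/(1+t)$, so that $-\tfrac1s=-\tfrac{1+t}{2t}$, and clearing the powers of $1+t$ exactly as in the proof of Theorem~\ref{thm:TLrec}, one finds $\sum_n U^{L,j}_n(x)t^n$ to be a rational function of $t$ whose denominator divides
$$
(-2t)^{k+1}r_L\Bigl(-\tfrac{1+t}{2t},-\tfrac{1+x}{2}\Bigr)=-1+\sum_{\ell=1}^{k+1}p^L_\ell(x)t^\ell,
$$
and whose numerator, namely $t^a(1+t)^k\Psi\bigl(\tfrac{2t}{1+t},x\bigr)$, is a Laurent polynomial in $t$ of top degree $\le k$; since $\sum_n U^{L,j}_n(x)t^n$ is an honest power series and $-1+\sum_\ell p^L_\ell(x)t^\ell$ is a unit in $\mathbb{R}[x][[t]]$, that numerator has no negative powers, hence is a polynomial of degree $\le k$. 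Comparing the coefficient of $t^n$ on the two sides of the resulting polynomial identity for $n\ge k+1$ produces the asserted recurrence, with the same coefficients $p^L_\ell(x)$.

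I expect the only real content to lie in the first step: that $x$-differentiation commutes with the recurrence of Proposition~\ref{prop:fnxy} because its coefficients are $x$-free, so $G_n$ inherits the recurrence with unchanged index shifts. Everything downstream is a line-for-line repetition of the rational-function bookkeeping in the proof of Theorem~\ref{thm:TLrec}, with $G_n$ in place of $\hat f_n$; the bounded index shift and the extra factor $t^a$ produced by the degree shift $x^n\rightsquigarrow x^{n+j-1}$ in the definition of $U^{L,j}_n$ touch only finitely many low-order terms and therefore cannot alter the denominator, hence cannot alter the $p^L_\ell$.
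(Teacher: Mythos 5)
Your proposal is correct and is essentially the paper's own argument: both hinge on the single observation that the recurrence coefficients of Proposition~\ref{prop:fnxy} (equivalently, the denominator $r_L(-1/t,-1-y)$ in Proposition~\ref{prop:fgen}) do not involve $x$, so the $(j-1)$-fold $x$-derivative inherits the same denominator/characteristic polynomial, after which the rational-function bookkeeping of Theorem~\ref{thm:TLrec} is repeated through \eqref{eq:TUf}. Your extra care with the factor $t^{a}$, $a\le 0$, together with the Laurent-polynomial versus power-series argument, is in fact warranted: the derivation of \eqref{eq:TUf} from \eqref{eq:ULj} produces an additional factor $t^{1-j}$ not shown in the printed formula, and your handling shows this shift affects neither the denominator nor the coefficients $p^L_\ell(x)$, so the conclusion stands unchanged.
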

\begin{proof}
To obtain a proof of this statement, observe that the proof of
Theorem~\ref{thm:TLrec} depends on (\ref{eq:TLgen}), which follows from
(\ref{eq:Tf}) and from Proposition~\ref{prop:fgen}. In the proof of
Theorem~\ref{thm:TLrec} we observed that on the right hand side of
(\ref{eq:TLgen}) we may simplify
by $(1-tx)$. Note that we can make an analogous observation ``one step
earlier'' about the right hand side of Proposition~\ref{prop:fgen}:
using $r_L(-1/t, -1-y)$ as the common denominator on the right hand
side, we may simplify the numerator  $r_L(-1/t, -1-y)- r_L(-1-x,-1-y)$
by $1-t(x+1)$ and obtain a formula of the form
$$
f(x,y,t)=\frac{\widetilde{r}_L(x,y,t)}{r_L(-1/t, -1-y)}
$$
for some function $\widetilde{r}_L(x,y,t)$ that is a polynomial of $x$,
$y$ and $1/t$. The denominator $r_L(-1/t, -1-y)$ is independent of $x$,
and remains unchanged when we take the partial derivative with respect
to $x$, even repeatedly. We conclude our proof by referring to
(\ref{eq:TUf}) instead of (\ref{eq:Tf}).
\end{proof}
Using Corollary~\ref{cor:Ucross} and Theorem~\ref{thm:ULrec} it is easy
to answer the question on real roots when the dimension of $L$ is $1$.
\begin{proposition}
\label{prop:Udim1}
Let $s\geq 1$ be an integer and $L$ be the subdivision of the
$1$-simplex by $s$ interior vertices. Then the polynomial $U^{L,2}_n(x)$
has $n$ distinct real roots in the open interval $(-1,1)$.
\end{proposition}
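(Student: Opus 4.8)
The plan is to derive a closed form for $U^{L,2}_n(x)$ completely parallel to \eqref{eq:k=1}, pass to the trigonometric representation used in the proof of Proposition~\ref{prop:Tdim1}, and read the zeros off from it. Since $k=1$, the only higher kind available is $j=2$, and by Theorem~\ref{thm:ULrec} the polynomials $U^{L,2}_n(x)$ satisfy, for $n\ge 2$, the very same recurrence as the $T^L_n(x)$, namely $U^{L,2}_n(x)=2x\,U^{L,2}_{n-1}(x)+\bigl((x^2-1)(s-1)-1\bigr)U^{L,2}_{n-2}(x)$, because the coefficients $p^L_1(x)=2x$ and $p^L_2(x)=(x^2-1)(s-1)-1$ are exactly the ones extracted in Section~\ref{sec:rr} from $(-2t)^2 r_L\bigl(-\tfrac{1+t}{2t},-\tfrac{1+x}{2}\bigr)$. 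Theorem~\ref{thm:ULrec} does not record the initial polynomials, so the first step is to compute them directly from Definition~\ref{def:Tj}: with $K$ a single vertex and then a single edge, using $U^{L,2}(1)=0$, the identity $F(K,x)=\bigl(\tfrac{x+1}{2}\bigr)^{|V(K)|}$ for a simplex, and $U^{L,2}_n(x)=\tfrac12 U^{L,2}(x^{n+1})$, one gets $U^{L,2}_0(x)=1$ and $U^{L,2}_1(x)=2x$.

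Next I would solve the recurrence. Its characteristic equation $q^2-2xq-\bigl((s-1)(x^2-1)-1\bigr)=0$ has the roots $x\pm\sqrt{s(x^2-1)}$, the same pair that occurs in \eqref{eq:k=1}; fitting the two initial values produces
$$U^{L,2}_n(x)=\frac{\bigl(x+\sqrt{s(x^2-1)}\bigr)^{n+1}-\bigl(x-\sqrt{s(x^2-1)}\bigr)^{n+1}}{2\sqrt{s(x^2-1)}}\qquad(n\ge 0),$$
which, upon expanding the binomials, is visibly a polynomial in $x$ of degree $n$ (in agreement with Theorem~\ref{thm:gtch2}), and which for $s=1$ reduces to the classical trigonometric form of the Chebyshev polynomial of the second kind.

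Then I would localize to $(-1,1)$, recycling the auxiliary functions from the proof of Proposition~\ref{prop:Tdim1}. For $x\in(-1,1)$ write $\sqrt{s(x^2-1)}=i\sqrt{s(1-x^2)}$; then $x+\sqrt{s(x^2-1)}=x+i\sqrt{s(1-x^2)}$ has modulus $R(x):=\sqrt{x^2+s(1-x^2)}$ and argument $\alpha(x)=\arccos(\phi(x))$, where $\phi(x)=x/R(x)$, while $x-\sqrt{s(x^2-1)}$ is its complex conjugate. Substituting into the closed form gives, for $x\in(-1,1)$,
$$U^{L,2}_n(x)=R(x)^{n+1}\cdot\frac{\sin\bigl((n+1)\alpha(x)\bigr)}{\sqrt{s(1-x^2)}}.$$
Recall from Proposition~\ref{prop:Tdim1} that $\phi$ is a strictly increasing bijection of $[-1,1]$ onto itself, so $\alpha$ is a continuous strictly decreasing bijection of $[-1,1]$ onto $[0,\pi]$.

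Finally I would count the roots. On $(-1,1)$ the factors $R(x)^{n+1}$ and $\sqrt{s(1-x^2)}$ are strictly positive, so $U^{L,2}_n(x)=0$ there precisely when $(n+1)\alpha(x)\in\{\pi,2\pi,\dots,n\pi\}$; since $\alpha$ is a decreasing bijection onto $[0,\pi]$, each of these $n$ equations has exactly one solution, and these are $n$ distinct points of $(-1,1)$. As $U^{L,2}_n$ has degree $n$, these are all of its roots, which proves the proposition. I do not expect a genuine obstacle here: once the closed form is in place the argument mirrors that of Proposition~\ref{prop:Tdim1}, and the only step calling for a little care is the direct evaluation of the two initial polynomials $U^{L,2}_0$ and $U^{L,2}_1$ together with the verification that the proposed closed form solves the recurrence.
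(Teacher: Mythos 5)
Your proposal is correct and follows essentially the same route as the paper: establish $U^{L,2}_0=1$, $U^{L,2}_1=2x$, invoke the common recurrence of Theorem~\ref{thm:ULrec} to get the closed form \eqref{eq:k=1U}, and then count the $n$ zeros of $\sin((n+1)\alpha(x))$ via the function $\alpha(x)$ from the proof of Proposition~\ref{prop:Tdim1} (your expression $R(x)^{n+1}\sin((n+1)\alpha(x))/\sqrt{s(1-x^2)}$ coincides with \eqref{eq:Utrig} since $\sin(\alpha(x))=\sqrt{s(1-x^2)}/R(x)$). The only cosmetic difference is that you compute the initial polynomials directly from Definition~\ref{def:Tj} on a vertex and an edge, whereas the paper reads them off Corollary~\ref{cor:Ucross}; both give the same values.
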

\begin{proof}
Using Corollary~\ref{cor:Ucross} we obtain that
$U^{L,2}_0(x)=2^{1-2}\cdot 2=1$ and
$$U^{L,2}_1(x)=2^{1-2}\cdot 4\cdot (1+2\cdot (x-1)/2)=2x.$$
In analogy of (\ref{eq:k=1}) it is easy to derive
\begin{equation}
\label{eq:k=1U}
U^{L,2}_n(x)=\frac{(x+\sqrt{s(x^2-1)})^{n+1}-(x-\sqrt{s(x^2-1)})^{n+1}}
{2\sqrt{s(x^2-1)}}
\quad\mbox{for  $n\geq 0$.}
\end{equation}
The statement now follows from the fact that, in analogy to
(\ref{eq:Ttrig}), we have
\begin{equation}
\label{eq:Utrig}
U^{L,2}_n(x)=\left(\sqrt{x^2+s(1-x^2)}\right)^n
\frac{\sin((n+1)\alpha(x))}{\sin(\alpha(x))},
\end{equation}
where $\alpha(x)$ is the function introduced in the proof of
Proposition~\ref{prop:Tdim1}, and from the observation that
there are $n$ different
values of $\alpha$ in $(0,\pi)$ for which $\sin((n+1)\alpha)=0$.
Note that, for $s=1$, (\ref{eq:Utrig}) is equivalent to the second half of
(\ref{eq:Ttrigdef}).
\end{proof}

\section{Generalized lower bounds on face numbers}
\label{sec:glbf}

We follow \cite{Nevo-Missing}, with notational change that dimension $d$ there is replaced by $d-1$ here.
For $d,i\geq 1$ integers, let $\mathcal{HS}(i,d)$ be the family of $(d-1)$-dimensional homology spheres without missing faces of dimension $>i$. For $\Delta\in \mathcal{HS}(i,d)$ let $g^{(i)}(\Delta):=g^{(d,i)}(h(\Delta,t))$ be the vector of coefficients when expressing the $h$-polynomial $h(\Delta,t)$ in the basis
$B_{d,i}:=(P_{d,i}(t), tP_{d-2,i}(t),
t^2P_{d-4,i}(t),\ldots,t^{\lfloor\frac{d}{2}\rfloor}
P_{d-2\lfloor\frac{d}{2}\rfloor,i}(t)),$
where
$P_{d,i}(t):=(1+t+\cdots +t^i)^q(1+t+\cdots +t^r)$,
and $q\geq 0, 1\leq r\leq i$ are the unique integers such that $d=qi+r$.

\begin{conjecture}\cite[Conjecture 1.5]{Nevo-Missing}\label{conj:missing-g-conj}
If $\Delta\in \mathcal{HS}(i,d)$, then $g^{(i)}(\Delta)\geq 0$
(component-wise).
\end{conjecture}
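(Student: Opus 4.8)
The two extremes of Conjecture~\ref{conj:missing-g-conj} are famous statements of very unequal difficulty, so I do not expect a complete proof; I will indicate a line of attack on the general statement, say where it stalls, and then describe the sub-case that the machinery of this paper does settle. For $i\ge d$ there is no restriction on missing faces, $B_{d,i}$ is the usual $g$-basis, $g^{(i)}(\Delta)$ is the ordinary $g$-vector, and the conjecture becomes the $g$-conjecture for homology spheres. For $i=1$ the spheres in question are exactly the flag ones, $B_{d,1}=((1+t)^d,\,t(1+t)^{d-2},\dots)$, $g^{(1)}(\Delta)$ is the $\gamma$-vector, and the conjecture becomes Gal's conjecture, which is open; the intermediate cases $1<i<d$ are likewise open in general.

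A uniform attack would run through commutative algebra. Fix a field $\mathbf{k}$, pass to a generic Artinian reduction $A$ of the Stanley--Reisner ring $\mathbf{k}[\Delta]$, so that $\dim_{\mathbf{k}}A_j=h_j(\Delta)$; since $\Delta$ is a $(d-1)$-homology sphere, $A$ is Gorenstein with socle in degree $d$. The passage from $(h_j(\Delta))$ to $(g^{(i)}_j(\Delta))$ is triangular and unimodular, so $g^{(i)}(\Delta)\ge 0$ is equivalent to a finite list of ``relative'' Lefschetz-type injectivity statements attuned to the factorization $P_{d,i}(t)=(1+t+\cdots+t^i)^q(1+t+\cdots+t^r)$ --- morally, that suitable generic homogeneous elements of $A$ act injectively on graded pieces in the pattern dictated by $B_{d,i}$, a Lefschetz property refined by the missing-face structure. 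The hypothesis that $\Delta$ has no missing face of dimension $>i$, equivalently that $\Delta$ is reconstructible from its $i$-skeleton, should be what makes these injectivities available, by forcing enough of a full $i$-skeleton to sit inside $\Delta$ that the factors $1+t+\cdots+t^i$ appear in the Hilbert series of appropriate subquotients of $A$. Carrying out this reduction and then verifying the injectivities is the heart of the matter; the \emph{main obstacle} is exactly that verification for small $i$, where the generic Artinian reduction demonstrably does not see the ``no large missing face'' structure --- for $i=1$ this is the open Gal conjecture, and no substitute for the Lefschetz mechanism is known.

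What \emph{is} attainable --- and is, as the abstract announces, the content of Section~\ref{sec:glbf} --- is the verification of Conjecture~\ref{conj:missing-g-conj} for the homology spheres produced by our construction, namely the generalized Tchebyshev triangulations $\diamondsuit(n)'$ of the boundary complex $\diamondsuit(n)$ of an $n$-dimensional cross-polytope. Here the plan is explicit. First, by Corollary~\ref{cor:Tcross} the $F$-polynomial of $\diamondsuit(n)'$ is $T^L_n(x)$, and \eqref{eq:TLh} then reads off the whole $h$-vector $(h_0,\dots,h_n)$ of $\diamondsuit(n)'$ from $T^L_n(x)$, hence --- via Theorem~\ref{thm:TLrec} and \eqref{eq:TLgen} --- from the magic polynomial $r_L$ of \eqref{eq:magic}, i.e. from the refined face count of $L$. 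Second, one must determine the largest dimension $i=i(L,n)$ of a missing face of $\diamondsuit(n)'$: these arise from the surviving antipodal edges of $\diamondsuit(n)$, from non-faces created inside each copy of $L$, and from non-faces spanning several copies, and $i(L,n)$ is governed by the combinatorics of $\partial(L)$ versus $\Int(L)$ (for $k=2$ one can read it off \eqref{eq:2dmagic} and Lemma~\ref{l:link}). Third, with this $i$ and $d=n$ one expands the explicit $h$-polynomial in the basis $B_{n,i}$ and checks term-by-term nonnegativity; the positivity of the face numbers of $L$ feeding $r_L$, the nonnegativity of $h(\diamondsuit(n)',t)$ itself (this sphere is shellable), and --- when $i(L,n)=n$ --- the $g$-theorem for this polytopal sphere should together suffice. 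The delicate steps are the second one, translating ``no missing face of dimension $>i$'' into a condition on $L$, and the explicit matching of the computed $h$-polynomial against the template $P_{n,i}(t)$; everything else is routine once the formulas of Sections~\ref{sec:genf}--\ref{sec:TL} are in hand.
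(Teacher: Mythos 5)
The statement you address is a conjecture quoted from \cite{Nevo-Missing}; the paper does not prove it, and you are right not to claim a general proof (the extreme cases are the classical $g$-conjecture and Gal's conjecture). So the only substantive comparison concerns the special cases the paper actually verifies in Section~\ref{sec:glbf}, namely generalized Tchebyshev triangulations $\diamondsuit(d,L)$ of the boundary of the cross-polytope, and there your plan both overreaches and has a gap at its decisive step. The paper's engine is Theorem~\ref{thm:stellar}: a stellar subdivision at a face $F$ of dimension $\le i$ preserves $g^{(i)}\ge 0$ provided $g^{(i)}(\link_\Delta(F))\ge 0$, proved via the identity $h(\Delta(F),t)=h(\Delta,t)+t\,h(\link_{\Delta(F)}(uv_F),t)$ together with the join and index-shift facts of Lemma~\ref{lem:missingResults}. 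This is then run inductively over the intermediate complexes $\Delta_k$: Corollary~\ref{cor:L=stellar} handles $L$ a one-point stellar subdivision of a $j$-simplex with $j\le i$, and Theorem~\ref{thm:delta_k} handles arbitrary two-dimensional $L$ via a case analysis on $|F\cap T_k|$ and the recursion $h(\Delta_{k+1},t)=h(\Delta_k,t)+mt(1+t)h(\link_{\Delta_k}(T_{k+1}),t)$. For two-dimensional $L$ there is also a second route the paper records: Theorem~\ref{thm:Fstable} shows that $F(\Delta,t)$ having all roots real in $(-1,1)$ is equivalent to $g^{(2)}(\Delta,t)$ having all roots in $[-1,0)$, which together with Corollary~\ref{cor:Tcross}, Theorem~\ref{thm:2dimRealRoot} and $g^{(2)}_0=1$ yields nonnegativity.

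Your third step --- ``expand the explicit $h$-polynomial in $B_{n,i}$ and check term-by-term nonnegativity,'' with the check expected to follow from positivity of the face numbers of $L$, nonnegativity (shellability) of $h(\diamondsuit(n)',t)$, and the $g$-theorem when $i=n$ --- is precisely where the difficulty sits, and none of those ingredients delivers it. A nonnegative $h$-vector does not in general have nonnegative coordinates in the basis $B_{d,i}$ (if it did, already $i=1$ would settle Gal's conjecture for all shellable flag spheres), and the $g$-theorem covers only $i\ge d$, which is not the interesting regime here: the subdivision creates missing faces of dimension $k$, and since smaller $i$ gives the stronger statement (Lemma~\ref{lem:missingResults}(1)), the target is $i=k$, not ``the largest missing-face dimension.'' Moreover the paper's verification is not claimed for all $L$: only for the starred $j$-simplex with $j\le i$ and for arbitrary $L$ of dimension two, whereas your sketch implicitly promises the general $L$. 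To close the gap you need either the inductive stellar-subdivision mechanism (Theorem~\ref{thm:stellar} plus the link conditions tracked through the complexes $\Delta_k$) or, for $k=2$, the real-rootedness equivalence of Theorem~\ref{thm:Fstable} combined with Theorem~\ref{thm:2dimRealRoot}; a direct expansion of the computed $h$-vector against $P_{d,i}(t)$ does not by itself produce the required inequalities.
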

The case $i\geq d$ gives the usual $g$-vector and the well known $g$-conjecture,
see e.g. \cite{Stanley-greenbook} for more details on the latter,
and the case $i=1$ gives Gal's $\gamma$-vector and conjecture \cite{Gal}.
Generalizing the usual $g$-polynomial and Gal's $\gamma$-polynomial we
introduce the {\em generalized $g$-polynomial}
\begin{equation}
\label{eq:gi}
g^{(i)}(\Delta, t)=\sum_{j=0}^{\lfloor\frac{d}{2}\rfloor}
g^{(i)}(\Delta)_j t^j
\end{equation}
Conjecture~\ref{conj:missing-g-conj} is obviously equivalent to stating
that, for any $\Delta\in \mathcal{HS}(i,d)$, all coefficients in
$g^{(i)}(\Delta, t)$ are nonnegative.

The following related results \cite[Propositions 1.6 and 4.1]{Nevo-Missing} will be needed.
\begin{lemma}\label{lem:missingResults}
Let $\Delta\in \mathcal{HS}(i,d)$ and $\Delta'\in \mathcal{HS}(i,d')$.
\\
(1)
If $g^{(i)}(\Delta)\geq 0$, then $g^{(i+1)}(\Delta)\geq 0$.
\\
(2)
$\Delta * \Delta' \in \mathcal{HS}(i,d+d')$ and
if $g^{(i)}(\Delta)\geq 0$ and $g^{(i)}(\Delta')\geq 0$ then
$g^{(i)}(\Delta * \Delta')\geq 0$.
\end{lemma}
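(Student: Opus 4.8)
We outline how each part goes; the details are in \cite{Nevo-Missing}. For part~(2), recall that the join of homology spheres is again a homology sphere and that $h(\Delta*\Delta',t)=h(\Delta,t)\,h(\Delta',t)$. Every face of $\Delta*\Delta'$ is uniquely of the form $F=F_1\cup F_2$ with $F_1\subseteq V(\Delta)$, $F_2\subseteq V(\Delta')$, and lies in $\Delta*\Delta'$ precisely when $F_1\in\Delta$ and $F_2\in\Delta'$. If $F_1,F_2$ are both nonempty and $\partial F\subseteq\Delta*\Delta'$, then deleting a vertex of $F_2$ forces $F_1\in\Delta$ and deleting a vertex of $F_1$ forces $F_2\in\Delta'$, whence $F\in\Delta*\Delta'$. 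Consequently every missing face of $\Delta*\Delta'$ lies in $V(\Delta)$ or in $V(\Delta')$ and is a missing face of the respective factor (and conversely); in particular its dimension is $\le i$, so $\Delta*\Delta'\in\mathcal{HS}(i,d+d')$.

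The plan for the inequality in~(2) is to reduce it to an algebraic statement about the polynomials $P_{m,i}(t)$. Writing $h(\Delta,t)=\sum_\ell a_\ell\, t^\ell P_{d-2\ell,i}(t)$ and $h(\Delta',t)=\sum_m b_m\, t^m P_{d'-2m,i}(t)$ with all $a_\ell,b_m\ge 0$, expanding the product and collecting powers, it suffices to show that every $P_{a,i}(t)P_{b,i}(t)$ has nonnegative coordinates in the basis $B_{a+b,i}$; more generally I would prove, by induction on the number of factors, that any finite product $\prod_j P_{m_j,i}(t)$ does — this stronger form is what part~(1) needs. For two factors, write $a=q_ai+r_a$, $b=q_bi+r_b$ with $1\le r_a,r_b\le i$, so that $P_{a,i}(t)P_{b,i}(t)=(1+\cdots+t^i)^{q_a+q_b}(1+\cdots+t^{r_a})(1+\cdots+t^{r_b})$, and apply repeatedly the elementary ``rebalancing'' identity
\[ (1+\cdots+t^{A})(1+\cdots+t^{B})=(1+\cdots+t^{A+1})(1+\cdots+t^{B-1})+t^{B}(1+\cdots+t^{A-B}),\qquad A\ge B\ge 1, \]
stopping when the shorter factor is exhausted or the longer one has degree $i$. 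This rewrites $(1+\cdots+t^{r_a})(1+\cdots+t^{r_b})$ as a sum, with coefficients $0$ or $1$, of terms $t^\ell(1+\cdots+t^{r_a+r_b-2\ell})$, plus (when $r_a+r_b>i$) a single term in which one such factor is replaced by $(1+\cdots+t^i)(1+\cdots+t^{r_a+r_b-i})$. Multiplying through by $(1+\cdots+t^i)^{q_a+q_b}$ and using $(1+\cdots+t^i)^{Q}P_{s,i}(t)=P_{Qi+s,i}(t)$ (with $P_{0,i}(t):=1$) converts this into an expansion $P_{a,i}(t)P_{b,i}(t)=\sum_\ell c_\ell\, t^\ell P_{a+b-2\ell,i}(t)$ with each $c_\ell\in\{0,1\}$; substituting back yields $g^{(i)}(\Delta*\Delta')\ge 0$.

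For part~(1), observe that $1+\cdots+t^s=P_{s,i+1}(t)$ for $1\le s\le i$, hence $P_{m,i}(t)=P_{i,i+1}(t)^{q}P_{r,i+1}(t)$ whenever $m=qi+r$ with $1\le r\le i$; by the multi-factor form above $P_{m,i}(t)$ therefore expands nonnegatively in $B_{m,i+1}$, and plugging these expansions of the $P_{d-2\ell,i}(t)$ into $h(\Delta,t)=\sum_\ell a_\ell t^\ell P_{d-2\ell,i}(t)$ and regrouping gives $g^{(i+1)}(\Delta)\ge 0$. (Equivalently, $P_{m,i}(t)$ is the $h$-polynomial of the join of simplex boundaries $(\partial\Delta^i)^{*q}*\partial\Delta^r\in\mathcal{HS}(i,m)\subseteq\mathcal{HS}(i+1,m)$, so~(1) is just the case $i\mapsto i+1$ of~(2).) I expect the only real work — and the main, though routine, obstacle — to be the bookkeeping in the iterated rebalancing identity: tracking the range of $\ell$ and the degenerate cases $r_a+r_b\le i$, $r_b=i$, and interior degree $0$.
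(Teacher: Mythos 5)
The paper itself gives no proof of this lemma --- it is quoted from \cite{Nevo-Missing} (Propositions 1.6 and 4.1) and used as a black box --- so there is no internal argument to match; what you wrote is a self-contained proof, and it is correct. Your description of the missing faces of a join is right: if $F=F_1\cup F_2$ with both parts nonempty and $\partial F\subseteq \Delta*\Delta'$, removing a vertex of $F_2$ (resp.\ of $F_1$) shows $F_1\in\Delta$ (resp.\ $F_2\in\Delta'$), hence $F\in\Delta*\Delta'$; so every missing face lives in one factor and has dimension $\le i$, giving $\Delta*\Delta'\in\mathcal{HS}(i,d+d')$. The inequality in (2) indeed reduces, via $h(\Delta*\Delta',t)=h(\Delta,t)h(\Delta',t)$, to the purely algebraic claim that $P_{a,i}(t)P_{b,i}(t)$ has nonnegative coordinates in $B_{a+b,i}$, and your rebalancing identity checks out: writing $[n]:=1+t+\cdots+t^n=(t^{n+1}-1)/(t-1)$ one computes $[A][B]-[A+1][B-1]=t^{B}[A-B]$ directly. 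Iterating with $r_a\ge r_b$ until the long factor reaches degree $i$ gives, when $r_a+r_b\ge i$, $[r_a][r_b]=[i][r_a+r_b-i]+\sum_{\ell}t^{\ell}[r_a+r_b-2\ell]$ with $\ell$ running from $r_a+r_b-i+1$ to $r_b$, and all these brackets have degree $\le i-2$; when $r_a+r_b\le i$ the full classical expansion $[r_a][r_b]=\sum_{\ell=0}^{r_b}t^{\ell}[r_a+r_b-2\ell]$ already suffices. Multiplying by $[i]^{q_a+q_b}$ and using $[i]^{Q}[s]=P_{Qi+s,i}(t)$ (valid also for $s=0$ with the convention $P_{0,i}:=1$, since $P_{Qi,i}=[i]^{Q}$) lands exactly on the basis elements $t^{\ell}P_{a+b-2\ell,i}(t)$ with coefficients in $\{0,1\}$, and the induction over several factors preserves nonnegativity of the coefficients. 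Part (1) then follows as you say from $P_{m,i}=P_{i,i+1}^{\,q}P_{r,i+1}$ together with the multi-factor statement for $i+1$ and regrouping; your alternative reading of $P_{m,i}$ as the $h$-polynomial of a join of simplex boundaries in $\mathcal{HS}(i+1,m)$ is also valid. The caveats you flag (degenerate cases, $P_{0,i}:=1$) are genuinely routine and cause no trouble, so the sketch stands as a complete and rather elementary replacement for the citation.
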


We will verify Conjecture \ref{conj:missing-g-conj} for simplicial spheres arising as generalized Tchebyshev triangulations of $\diamondsuit(d)$, the boundary complex of the $d$-cross polytope, induced by $L$,
for certain triangulations $L$ considered in previous sections.
Denote any simplicial sphere obtained in this way
by $\diamondsuit(d,L)$.
We have seen in Theorem \ref{thm:samef} that, although the $\diamondsuit(d,L)$ have different combinatorial types, they all have the same $f$-vector, and hence the same generalized $g$-polynomial.

\begin{theorem}\label{thm:stellar}
Let $\Delta\in \mathcal{HS}(i,d)$ and $F\in \Delta$ of dimension $\leq i$. Note that $\link_{\Delta}(F)\in \mathcal{HS}(i,d-|F|)$ and the stellar subdivision $\Delta(F):=\operatorname{Stellar}_{\Delta}(F) \in \mathcal{HS}(i,d)$.
Assume $g^{(i)}(\Delta)\geq 0$ and $g^{(i)}(\link_{\Delta}(F))\geq 0$. Then $g^{(i)}(\Delta(F))\geq 0$.
\end{theorem}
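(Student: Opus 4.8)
The statement relates the generalized $g$-vectors of $\Delta$, $\link_\Delta(F)$, and $\Delta(F)$, so I would begin by expressing $h(\Delta(F),t)$ in terms of $h(\Delta,t)$ and $h(\link_\Delta(F),t)$. The stellar subdivision at a face $F$ of dimension $j-1$ (so $|F|=j$) replaces the star of $F$, which is $\partial F * \link_\Delta(F)$ worth of faces joined with $F$, by $v_F * \partial F * \link_\Delta(F)$. The standard effect on $f$-vectors (and hence $h$-vectors) of a stellar subdivision at a face of dimension $j-1$ in a $(d-1)$-sphere is the well-known formula
\[
h(\Delta(F),t) = h(\Delta,t) + t\,(1+t+\cdots+t^{\,j-2})\cdot h(\link_\Delta(F),t),
\]
valid because $\link_\Delta(F)$ is a $(d-j-1)$-sphere. (If $F$ is a vertex, $j=1$, the correction term vanishes and $\Delta(F)=\Delta$, consistent with the formula.) The first step of the proof is therefore to record this identity carefully, citing the classical computation of how stellar subdivisions alter the $h$-polynomial.

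**Translating to generalized $g$-vectors.** The generalized $g$-vector is defined by expanding $h$ in the basis $B_{d,i}$ built from the polynomials $P_{d,i}(t)$. The key algebraic fact I would isolate is a recursion for the $P_{d,i}$ themselves under the operation $h \mapsto t(1+\cdots+t^{j-2})h$. Concretely: since $\link_\Delta(F)\in\mathcal{HS}(i,d-j)$ and $\dim F = j-1\le i$, the assumption $g^{(i)}(\link_\Delta(F))\ge 0$ means $h(\link_\Delta(F),t)$ is a nonnegative combination of $B_{d-j,i}$. I would then show that for each basis element $t^a P_{d-j-2a,i}(t)$ of $B_{d-j,i}$, the polynomial $t(1+t+\cdots+t^{\,j-2})\cdot t^a P_{d-j-2a,i}(t)$ is itself a \emph{nonnegative} combination of the basis elements $B_{d,i}$. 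Granting this, $t(1+\cdots+t^{j-2})h(\link_\Delta(F),t)$ is a nonnegative combination of $B_{d,i}$, and adding it to $h(\Delta,t)$—which by hypothesis has nonnegative $B_{d,i}$-coordinates—gives $g^{(i)}(\Delta(F))\ge 0$. So the whole theorem reduces to a purely combinatorial-algebraic positivity lemma about the polynomials $P_{d,i}$.

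**The positivity lemma.** The crux is therefore: for $1\le j \le i+1$ and $m\ge 0$, is $t(1+t+\cdots+t^{\,j-2})\,P_{m,i}(t)$ a nonnegative combination of $P_{m+j,i}(t), tP_{m+j-2,i}(t), t^2 P_{m+j-4,i}(t),\dots$? Here I would use the explicit form $P_{d,i}(t) = (1+t+\cdots+t^i)^q(1+t+\cdots+t^r)$ with $d = qi+r$, $1\le r\le i$. A convenient reformulation: multiplication by $t(1+t+\cdots+t^{j-2})$ raises total degree by $j-1$, and one expects the "extra" factor $1+t+\cdots+t^{j-2}$ to be absorbable into the $(1+t+\cdots+t^i)$ blocks (using $j-1\le i$) at the cost of introducing lower basis terms with the symmetry-correcting $t^a$ factors. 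I would try to prove it by a double induction—on $m$ and on $j$—peeling one factor $(1+t+\cdots+t^i)$ off $P_{m,i}$ when $m\ge i$, and checking the finitely many base cases $0\le m<i$ directly, where $P_{m,i}(t)=1+t+\cdots+t^m$. An alternative route, which may be cleaner, is to phrase everything in terms of the known characterization that $g^{(i)}\ge 0$ is equivalent to a certain "$i$-balanced" unimodality-type condition, and invoke parts (1) and (2) of Lemma~\ref{lem:missingResults} together with the fact that the correction term is the $h$-polynomial of the join $\partial(\text{segment}) * (\text{a simplex of dim } j-2) * \link_\Delta(F)$—i.e., realize the stellar correction as literally a join of spheres/balls and push the problem onto Lemma~\ref{lem:missingResults}(2).

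**Expected main obstacle.** The genuine difficulty is the positivity lemma for the $P_{d,i}$: the basis $B_{d,i}$ is not a monomial basis, so "nonnegative combination" is a real constraint, and one must track how the floor/ceiling in the decomposition $d=qi+r$ interacts with multiplication by $t(1+\cdots+t^{j-2})$. I expect the induction to require care at the boundary cases $r=1$ versus $r=i$ and when $j-1$ is close to $i$. A secondary (but routine) point to verify is that $\link_\Delta(F)$, $\Delta(F)$ really lie in $\mathcal{HS}(i,\cdot)$—that stellar subdivision at a face of dimension $\le i$ creates no new missing faces of dimension $>i$ and preserves the homology-sphere property—which is essentially Lemma~\ref{l:link} applied locally plus the observation that the only new missing face introduced is $\{v_F\}\cup(\text{a vertex outside } \overline{\mathrm{star}}_\Delta F)$, of dimension $1\le i$; I would dispatch this quickly at the start.
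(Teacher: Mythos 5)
Your reduction starts exactly where the paper does: the identity $h(\Delta(F),t)=h(\Delta,t)+t(1+t+\cdots+t^{j-2})\,h(\link_{\Delta}(F),t)$ (with $|F|=j$) is correct and equivalent to the one used in the paper. But the step that carries all the content --- nonnegativity of the correction term in the basis $B_{d,i}$ --- is precisely what you leave open. Your main route defers it to a positivity lemma for products of the polynomials $P_{m,i}$, which you only sketch (``double induction, care at the boundary cases''), so as written the argument is incomplete at its crux; and your alternative route, which is the direction the paper actually takes, misidentifies the join: the correction factor $(1+t+\cdots+t^{j-2})\,h(\link_{\Delta}(F),t)$ is \emph{not} the $h$-polynomial of $\partial(\mathrm{segment})*(\text{a }(j-2)\text{-simplex})*\link_{\Delta}(F)$ --- that join has $h$-polynomial $(1+t)\,h(\link_{\Delta}(F),t)$ --- and in any case Lemma~\ref{lem:missingResults}(2) is stated for homology spheres, so a ball factor cannot be fed into it.

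The missing idea is the paper's: fix $u\in F$ and observe that $\link_{\Delta(F)}(uv_F)=\partial(F\setminus u)*\link_{\Delta}(F)$ is itself a homology sphere in $\mathcal{HS}(i,d-2)$ whose $h$-polynomial is exactly $(1+t+\cdots+t^{j-2})\,h(\link_{\Delta}(F),t)$, since $h(\partial(F\setminus u),t)=1+t+\cdots+t^{j-2}$. Lemma~\ref{lem:missingResults}(1) gives $g^{(i)}(\partial(F\setminus u))\geq 0$ (its ordinary $g$-polynomial is $1$), Lemma~\ref{lem:missingResults}(2) then gives $g^{(i)}(\partial(F\setminus u)*\link_{\Delta}(F))\geq 0$, and since multiplication by $t$ carries each basis element $t^{a}P_{d-2-2a,i}(t)$ of $B_{d-2,i}$ to the basis element $t^{a+1}P_{d-2(a+1),i}(t)$ of $B_{d,i}$, the $h$-identity yields $g^{(i)}(\Delta(F),t)=g^{(i)}(\Delta,t)+t\,g^{(i)}(\link_{\Delta(F)}(uv_F),t)$, finishing the proof with no new polynomial lemma. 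Your positivity lemma is in fact true --- for $j\geq 3$ the factor $1+t+\cdots+t^{j-2}$ is the $h$-polynomial of the boundary of a $(j-2)$-simplex, which lies in $\mathcal{HS}(i,j-2)$ because $j-2\leq i$, so the lemma itself reduces to Lemma~\ref{lem:missingResults}(1)--(2) --- but the induction you anticipate is unnecessary, and until one of your two routes is actually carried out the proposal has a genuine gap at the decisive step.
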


\begin{proof}
Indeed $\link_{\Delta}(F)\in \mathcal{HS}(i,d-|F|)$, see e.g. \cite[Lemma 2.3]{Nevo-Missing}. To see that $\Delta(F) \in \mathcal{HS}(i,d)$ note that the missing faces of $\Delta(F)$ and not of $\Delta$ are $F$ and some missing edges containing the new vertex $v_F$ of $\Delta(F)$.

Let $u\in F$, then the \emph{link condition}  $\link_{\Delta(F)}(uv_F)=\link_{\Delta(F)}(v_F)\cap \link_{\Delta(F)}(u)$ holds. Moreover, this complex is in $\mathcal{HS}(i,d-2)$ and equals the join $\partial(F\setminus u) * \link_{\Delta}(F)$ of two complexes,
where $\link_{\Delta}(F)\in \mathcal{HS}(i,d-|F|)$ and
$\partial(F\setminus u)\in \mathcal{HS}(i,|F|-3)$.
Note that $g^{(|F|-2)}(\partial(F\setminus u),t)= g(\partial(F\setminus u),t) =1$, thus by Lemma~\ref{lem:missingResults}(1) $g^{(i)}(\partial(F\setminus u))\geq 0$.
By assumption, $g^{(i)}(\link_{\Delta}(F))\geq 0$, so by Lemma~\ref{lem:missingResults}(2),
$g^{(i)}(\link_{\Delta(F)}(uv_F))\geq 0$.

Now, the contraction $v_F\mapsto u$ in $\Delta(F)$ results in $\Delta$. An easy computation shows
$h(\Delta(F),t)=h(\Delta,t)+ th(\link_{\Delta(F)}(uv_F),t)$.
Thus the generalized $g$-polynomial satisfies
$$g^{(i)}(\Delta(F),t)=g^{(i)}(\Delta,t)+t
g^{(i)}(\link_{\Delta(F)}(uv_F),t).$$
By our assumption, both summands on the right hand side have nonnegative
coefficients, therefore the same holds for the left hand side.
\end{proof}

\begin{corollary}\label{cor:L=stellar}
Let $L$ be the subdivision of the $j$-simplex with one interior vertex, namely the one obtained by starring. Assume $1\leq j\leq i$.
Then $g^{(i)}(\diamondsuit(d,L))\geq 0$.
\end{corollary}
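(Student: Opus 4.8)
The plan is to derive Corollary~\ref{cor:L=stellar} from Theorem~\ref{thm:stellar} by realizing $\diamondsuit(d,L)$ as an iterated stellar subdivision of $\diamondsuit(d)$ along a carefully chosen sequence of $j$-faces, verifying at each step that the hypotheses of Theorem~\ref{thm:stellar} are met. First I would record the base case: $\diamondsuit(d)\in \mathcal{HS}(i,d)$ for every $i\geq 1$, since its missing faces are exactly the $d$ ``antipodal'' pairs, all of dimension $1\leq i$; and $g^{(i)}(\diamondsuit(d))\geq 0$ because $\diamondsuit(d)=\partial(\text{edge})^{*d}$ is an iterated join of copies of $\diamondsuit(1)=\partial(\text{edge})$, which has $g^{(i)}(\diamondsuit(1),t)=1\geq 0$, so Lemma~\ref{lem:missingResults}(2) applies inductively.

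Next I would set up the induction on the number of $j$-faces. By Definition~\ref{def:gtch} with $L$ the one-point star of the $j$-simplex, a generalized Tchebyshev triangulation of $\diamondsuit(d)$ induced by $L$ is obtained by listing the $j$-faces $\sigma_1,\dots,\sigma_m$ of $\diamondsuit(d)$ and successively replacing each $\sigma_i$-star by the star of $\sigma_i$; replacing the star of a face by the star of an interior vertex joined to $\partial(\sigma_i)$ is precisely $\mathrm{Stellar}_{K_{i-1}}(\sigma_i)$. So $\diamondsuit(d,L)=K_m$ where $K_i=\mathrm{Stellar}_{K_{i-1}}(\sigma_i)$ and $K_0=\diamondsuit(d)$. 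The inductive claim is that each $K_i\in\mathcal{HS}(i,d)$ and $g^{(i)}(K_i)\geq 0$. Granting $K_{i-1}\in\mathcal{HS}(i,d)$ with $g^{(i)}(K_{i-1})\geq 0$, I apply Theorem~\ref{thm:stellar} to $\Delta=K_{i-1}$ and $F=\sigma_i$: since $\dim\sigma_i=j\leq i$ (using $1\leq j\leq i$) and $\sigma_i\in K_{i-1}$, the theorem gives $K_i=\Delta(\sigma_i)\in\mathcal{HS}(i,d)$ and $g^{(i)}(K_i)\geq 0$, \emph{provided} we also know $g^{(i)}(\link_{K_{i-1}}(\sigma_i))\geq 0$. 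Thus the engine is in place; everything reduces to controlling these links.

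The main obstacle, then, is showing $g^{(i)}(\link_{K_{i-1}}(\sigma_i))\geq 0$ for each $i$. Here I would use the structure of the construction: at the moment $\sigma_i$ is about to be starred, its link in $K_{i-1}$ is either unchanged from $\diamondsuit(d)$ (if no previously-starred $\sigma_\ell$ met the open star of $\sigma_i$) or is itself a generalized Tchebyshev triangulation, induced by $L$, of $\link_{\diamondsuit(d)}(\sigma_i)$ — because by observation (*) in the proof of Theorem~\ref{thm:samefxy} the subdivision restricts compatibly to subcomplexes, and starring commutes with taking links in the appropriate sense. Now $\link_{\diamondsuit(d)}(\sigma_i)$ is itself the boundary of a cross-polytope of dimension $d-j-1$ (more precisely a join of such with a simplex boundary, hence in $\mathcal{HS}(i,\cdot)$ with nonnegative generalized $g$ by the base-case argument), and it has dimension strictly less than $d-1$. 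So I would restructure the whole proof as a double induction — outer on $d$, inner on the number of $j$-faces — where the outer induction hypothesis supplies $g^{(i)}\geq 0$ for the relevant link triangulations $\link_{K_{i-1}}(\sigma_i)$, since those are generalized Tchebyshev triangulations of $\diamondsuit$-type complexes of smaller dimension. The delicate point to nail down carefully is the precise identification of $\link_{K_{i-1}}(\sigma_i)$ as a generalized Tchebyshev triangulation of $\link_{\diamondsuit(d)}(\sigma_i)$ induced by $L$ (with the induced ordering on its $j$-faces), for which observation (*) and Lemma~\ref{l:link} are the right tools; once that identification is secured, the two inductions close and Theorem~\ref{thm:stellar} does the rest.
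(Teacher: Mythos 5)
Your reduction to Theorem~\ref{thm:stellar} and your treatment of the base complex are fine and parallel the paper (the paper gets $g^{(i)}(\diamondsuit(d))\geq 0$ from $g^{(1)}(\diamondsuit(d),t)=1$ and Lemma~\ref{lem:missingResults}(1), which is essentially your join argument). The gap is at the step you yourself flag as ``delicate'': the claim that $\link_{K_{i-1}}(\sigma_i)$ is either the original link or a generalized Tchebyshev triangulation of $\link_{\diamondsuit(d)}(\sigma_i)$ induced by $L$ is false in general. If a previously starred $j$-face $\sigma_\ell$ meets $\sigma_i$ and $\sigma_\ell\cup\sigma_i$ is a face (e.g.\ two triangles of $\diamondsuit(4)$ sharing an edge, $j=2$), then by the commutation rule $\link_{K(T)}(F)\cong(\link_K(F))(T\setminus F)$ the earlier subdivision induces on the link of $\sigma_i$ a stellar subdivision at the face $\sigma_\ell\setminus\sigma_i$, whose dimension is \emph{strictly smaller} than $j$; moreover only some of the $j$-faces of the link (those coming from faces $\sigma_\ell$ disjoint from $\sigma_i$ with $\ell<i$) have been subdivided at that moment. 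So the link is a mixed, partial subdivision of $\diamondsuit(d-j-1)$, not a generalized Tchebyshev triangulation of a smaller cross-polytope, and your outer induction on $d$ has nothing to say about it; the induction does not close. Reordering the $j$-faces cannot rescue this: whenever two $j$-faces have union a face, whichever is processed later necessarily sees a modified link.

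What is needed, and what the paper actually proves, is a strengthened induction hypothesis about all relevant links of the intermediate complexes: for fixed $j\leq i$, if $F\in\diamondsuit(d)$ has dimension $\geq j$ and contains none of the already subdivided $j$-faces $F_1,\ldots,F_k$, then $g^{(i)}(\link_{\Delta_k}(F))\geq 0$. This is proved by induction on $k$ using exactly the commutation identity: $\link_{\Delta_k}(F)\cong(\link_{\Delta_{k-1}}(F))(F_k\setminus F)$, where $\dim(F_k\setminus F)\leq j\leq i$, so Theorem~\ref{thm:stellar} can be applied \emph{to the link itself}; the link-of-link hypothesis it requires is $g^{(i)}(\link_{\Delta_{k-1}}(F\cup F_k))\geq 0$, which is just another instance of the same statement for the larger face $F\cup F_k$ at the earlier stage $k-1$. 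In other words, the correct class of objects for the induction is ``links of faces of dimension $\geq j$ in the intermediate complexes $\Delta_k$,'' closed under the recursion, rather than ``Tchebyshev triangulations of smaller cross-polytopes''; this strengthened statement (and the fact that Theorem~\ref{thm:stellar} tolerates stellar subdivisions at faces of any dimension $\leq i$, not just dimension $j$) is the missing ingredient in your outline.
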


\begin{proof}
Note that $g^{(1)}(\diamondsuit(d),t)= \gamma(\diamondsuit(d),t)= 1$,
thus by Lemma~\ref{lem:missingResults}(1) $g^{(i)}(\diamondsuit(d))\geq
0$. Now $\diamondsuit(d,L)$ is obtained from $\diamondsuit(d)$ by a
sequence of stellar subdivisions at faces of dimension $j$ in
$\diamondsuit(d)$. As $j\leq i$, thanks to Theorem \ref{thm:stellar}, it
is enough to verify that when subdividing the $(k+1)$'th $j$-face
$F_{k+1}$ of $\Delta=\Delta_0=\diamondsuit(d)$, considered as a face in the $k$th
complex $\Delta_k$, we have $g^{(i)}(\link_{\Delta_k}(F_{k+1}))\geq 0$.

The key observation here is that for any simplicial complex $K$, the operations link and stellar subdivision commute, more precisely,
if $T\nsubseteq F$ are sets,
then
$$\link_{K(T)}(F)\cong (\link_K(F))(T\setminus F).$$
(Here, if $F'\notin K'$ for a complex $K'$ and a set $F'$ then define $K'(F'):=K'$. For $|T\setminus F|=1$ and $F\cup T\in K$ the isomorphism is given by mapping $v_T$ to the vertex of $T\setminus F$ and the other vertices to themselves.)
Let the order of the $j$-faces in $\Delta$ be $F_1,F_2,\ldots$.
We will prove the following stronger assertion by double induction on $d$ and $k$ (for fixed $1\leq j \leq i$):

(**) If $F\in \Delta$ is of dimension $\geq j$, and does not contain any of $F_1,\ldots,F_k$, then $g^{(i)}(\link_{\Delta_k}(F))\geq 0$.

The base case $d<j$ is trivial and the base case $k=0$ follows for any $d$ as $\link_{\diamondsuit(d)}(F)\cong \diamondsuit(d-|F|)$, so it has $g^{(1)}(\link_{\diamondsuit(d)}(F),t)=1$, hence $g^{(i)}(\link_{\diamondsuit(d)}(F))\geq 0$.
For $k>0$,
$\link_{\Delta_k}(F)\cong (\link_{\Delta_{k-1}}(F))(F_{k}\setminus F)$.
If $F_k\cup F \notin \Delta_{k-1}$, then
$\link_{\Delta_k}(F)=\link_{\Delta_{k-1}}(F)$ and we are done by induction on $k$.
Else, as $F_k\cup F \in \Delta_{k-1}$ and $F_k,F\in \Delta$ we conclude that $F_k\cup F \in \Delta$.
By induction on $k$, $g^{(i)}(\link_{\Delta_{k-1}}(F))\geq 0$. Also, $\link_{\link_{\Delta_{k-1}}(F)}((F_{k}\setminus F))=
\link_{\Delta_{k-1}}(F\cup F_k)$. By construction of $\Delta_{k-1}$,
$F\cup F_k$ does not contain any of $F_1,\ldots,F_{k-1}$ (as $F\cup F_k \in \Delta_{k-1}$), hence the induction on $k$ says
$g^{(i)}(\link_{\Delta_{k-1}}(F\cup F_k))\geq 0$.
Thus, by Theorem \ref{thm:stellar} we conclude that $g^{(i)}(\link_{\Delta_k}(F))\geq 0$.
\end{proof}

\begin{remark}
For any subdivision $L$ of the $1$-simplex (say with $k$ interior points), $g^{(1)}(\diamondsuit(d,L))=\gamma(\diamondsuit(d,L)) \geq 0$.
This is known, and also follows from Theorem \ref{thm:stellar}, as $L$ is obtained by a sequence of $k$ stellar subdivisions at an edge.
\end{remark}

We now turn to arbitrary subdivisions $L$ of the $2$-simplex.

\begin{theorem}
\label{thm:2dim}
If $\dim L=2$
then $\diamondsuit(d,L)\in \mathcal{HS}(2,d)$ and satisfies
$g^{(2)}(\diamondsuit(d,L))\geq 0$.
\end{theorem}
It is clear that $\diamondsuit(d,L)\in \mathcal{HS}(2,d)$. Below we state
and prove two generalizations of the second statement.

\begin{theorem}
\label{thm:delta_k}
Let $\dim L=2$.
Then complexes $\Delta_k=\diamondsuit(d)_k$, arising
in the definition of a $\diamondsuit(d,L)$, satisfy
\begin{itemize}
\item[(i)] $g^{(2)}(\Delta_k)\geq 0$ and
\item[(ii)] $g^{(2)}(\link_{\Delta_k}(T_{k+1})\geq 0$
where $T_{k+1}$ is the $(k+1)$th $2$-simplex of $\Delta_0=\diamondsuit(d)$ that is subdivided.
\end{itemize}
\end{theorem}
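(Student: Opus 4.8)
The plan is to prove the two statements simultaneously by double induction on the dimension $d$ and the step counter $k$, exactly as in the proof of Corollary~\ref{cor:L=stellar}, but now tracking the fact that subdividing a $2$-simplex $T$ of $\Delta_k$ by an isomorphic copy of $L$ is itself a \emph{finite sequence} of stellar subdivisions: first one stars an interior vertex of $L$ (a stellar subdivision at the $2$-face $T$ itself, which is a face of dimension $2\le i=2$, so Theorem~\ref{thm:stellar} applies), and then one performs a sequence of further stellar subdivisions at newly created edges and faces, all of dimension $\le 2$. So the single ``$L$-move'' that passes from $\Delta_k$ to $\Delta_{k+1}$ is internally decomposed into elementary stellar moves $\Delta_k = \Delta_k^{(0)}, \Delta_k^{(1)},\dots,\Delta_k^{(r_k)} = \Delta_{k+1}$, and the induction hypothesis must be strengthened to an assertion about all these intermediate complexes. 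Concretely, the statement to prove by induction will read: for every $F\in\Delta_k^{(\ell)}$ of dimension $\ge 0$ that does not contain any already-subdivided face (i.e.\ none of the faces starred in steps $1,\dots,k$, nor any of the faces starred in the first $\ell$ elementary moves of step $k+1$), one has $g^{(2)}(\link_{\Delta_k^{(\ell)}}(F))\ge 0$; and $g^{(2)}(\Delta_k^{(\ell)})\ge 0$ (the case $F=\emptyset$).

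The base cases are handled as before: $d\le 2$ is trivial, and for $k=\ell=0$ every link $\link_{\diamondsuit(d)}(F)\cong\diamondsuit(d-|F|)$ has $g^{(1)}=\gamma=1\ge 0$, so by Lemma~\ref{lem:missingResults}(1) $g^{(2)}\ge 0$. For the inductive step I would invoke the commutation identity $\link_{K(T)}(F)\cong(\link_K(F))(T\setminus F)$ used in Corollary~\ref{cor:L=stellar}: passing from $\Delta_k^{(\ell-1)}$ to $\Delta_k^{(\ell)}=\Delta_k^{(\ell-1)}(T_\ell)$ for the $\ell$th elementary stellar move at a face $T_\ell$ of dimension $\le 2$, the link of any legal $F$ either is unchanged (if $T_\ell\cup F\notin\Delta_k^{(\ell-1)}$) — and then induction on $\ell$ finishes — or equals $(\link_{\Delta_k^{(\ell-1)}}(F))(T_\ell\setminus F)$, a stellar subdivision of $\link_{\Delta_k^{(\ell-1)}}(F)$ at the face $T_\ell\setminus F$ of dimension $\le 2$. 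Then Theorem~\ref{thm:stellar} applies to this smaller complex (it lives in $\mathcal{HS}(2, d-|F|)$ with $d-|F|<d$, so the outer induction on $d$ is available), provided we know $g^{(2)}$ of its link $\link_{\link_{\Delta_k^{(\ell-1)}}(F)}(T_\ell\setminus F)=\link_{\Delta_k^{(\ell-1)}}(F\cup T_\ell)$ is nonnegative — and $F\cup T_\ell$ is again a legal face at stage $\ell-1$, so induction on $\ell$ (or $k$) gives it. Statement (ii) is the special instance $F = T_{k+1}$ at the completed stage $\ell = r_k$, noting $T_{k+1}$ is a $2$-simplex of $\Delta_0$ hence legal at that stage.

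The main obstacle I anticipate is bookkeeping, not mathematics: one must verify that each face starred in the internal decomposition of an $L$-move genuinely has dimension $\le i = 2$ (true because $L$ triangulates a $2$-simplex, so every face of $L$, in particular every face at which a stellar subdivision is performed during the sequence realizing the induced subdivision, has at most three vertices), and that the ``legality'' condition on $F$ is preserved under the commutation isomorphism — i.e.\ if $F\cup T_\ell\in\Delta_k^{(\ell-1)}$ and $F$ avoids all previously starred faces, then $F\cup T_\ell$ also avoids them, which follows because $\Delta_k^{(\ell-1)}$ contains no face that contains an already-starred face. One should also confirm that the intermediate complexes $\Delta_k^{(\ell)}$ are themselves in $\mathcal{HS}(2,d)$: this holds because each is obtained from a member of $\mathcal{HS}(2,d)$ by a stellar subdivision at a face of dimension $\le 2$, and Theorem~\ref{thm:stellar} records that such a subdivision stays within $\mathcal{HS}(i,d)$. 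Once these routine verifications are in place, statement (i) of Theorem~\ref{thm:delta_k} is the $F=\emptyset$ case of the strengthened induction at $\ell = r_k$, and Theorem~\ref{thm:2dim} follows by taking $k$ equal to the total number of $2$-faces of $\diamondsuit(d)$.
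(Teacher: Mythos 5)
Your proposal hinges entirely on the claim that the passage from $\Delta_k$ to $\Delta_{k+1}$ --- replacing a $2$-face by a copy of $L$ --- can be decomposed into a finite sequence of stellar subdivisions at faces of dimension $\le 2$. This is false for a general $L$. In any such sequence, starring a boundary edge is never allowed (it would create a new boundary vertex, which no later stellar subdivision can remove), so each elementary move stars either a triangle or an interior edge; the vertex created by the \emph{last} move is never subdivided afterwards and therefore has degree $3$ or $4$ in the final complex. Hence every $L$ obtainable by stellar subdivisions has an interior vertex of degree $3$ or $4$. Taking $L$ to be the boundary complex of the icosahedron with one facet removed gives a triangulation of the $2$-simplex whose nine new vertices are all interior and all of degree $5$, so it admits no stellar decomposition at all. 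Your argument therefore covers only the ``stellarly subdividable'' triangulations $L$ (essentially the territory of Corollary~\ref{cor:L=stellar} and of iterated edge/triangle starrings), not the arbitrary $L$ of Theorem~\ref{thm:delta_k}, and Theorem~\ref{thm:stellar} cannot be invoked for the single $L$-move itself.

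The paper handles the $L$-move directly rather than decomposing it: Euler's formula shows that the polynomial counting interior faces of $L$ is $t^3+mt+3mt^2+2mt^3$, depending only on the number $m$ of interior vertices, which yields the identity $h(\Delta_{k+1},t)=h(\Delta_k,t)+mt(1+t)\,h(\link_{\Delta_k}(T_{k+1}),t)$; nonnegativity of $g^{(2)}(\Delta_{k+1})$ then follows from the suspension case of Lemma~\ref{lem:missingResults} and induction. For the link statement it proves a stronger assertion (links of all faces $F$ of dimension $\ge 2$ not containing any $T_1,\dots,T_k$) by a case analysis on $|F\cap T_k|\in\{0,1,2\}$; in particular the case $|F\cap T_k|=1$ is an edge subdivision into a path with $s$ interior points, again handled by a direct $h$-polynomial identity rather than by Theorem~\ref{thm:stellar}. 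To salvage your plan you would need an analogue of Theorem~\ref{thm:stellar} for the single ``replace a $2$-face by $L$'' move (and for ``subdivide an edge into a path''), which is precisely the computation the paper carries out; the induction bookkeeping you describe is fine, but it does not substitute for that missing ingredient.
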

\begin{proof}
We proceed by induction on $d$ and $k$ and instead of (ii) we will prove the
following stronger assertion:

(iii) If $F\in \Delta_0$ is of dimension $\geq 2$, and does not contain any of $T_1,\ldots,T_k$, then $g^{(2)}(\link_{\Delta_k}(F))\geq 0$.

The base case $d<2$ is trivial, and the case $k=0$ is clear as both $\Delta_0$ and $\link_{\Delta_0}(T_1)$ are boundary complexes of cross polytopes.
Let $m$ be the number of interior vertices in $L$, and $T$ be the $2$-simplex $L$ subdivides. By Euler's formula applied to the $2$-sphere $S=L\cup \{T\}$, one gets that the polynomial $f(L^0,t)$ counting faces of $L^0:=L\setminus \partial L$ satisfies $f(L^0,t)-t^3 = mt+3mt^2+2mt^3$.

Then
\begin{equation}\label{eq:L^0}
\begin{split}
f(\Delta_{k+1},t)&= f(\Delta_k,t) - t^3 f(\link_{\Delta_k}(T_{k+1}),t) + f(L^0,t) f(\link_{\Delta_k}(T_{k+1}),t)\\
&= f(\Delta_{k},t) + f(\link_{\Delta_k}(T_{k+1}),t) (mt+3mt^2+2mt^3).
\end{split}
\end{equation}
For any $(d-1)$-dimensional homology sphere $\Delta$, $h(\Delta,t)=(t-1)^d f(\Delta,\frac{1}{t-1})$, and combined with equation (\ref{eq:L^0}) we get
$$
h(\Delta_{k+1},t) = h(\Delta_{k},t) + mt(t+1) h(\link_{\Delta_k}(T_{k+1}),t).
$$
Note that the suspension $\Sigma\Delta$, i.e. the join of $\Delta$ with the two  points complex, has $h$-vector $(1+t)h(\Delta,t)$.
Thus
$$h(\Delta_{k+1},t) = h(\Delta_{k},t) + mt\cdot h(\Sigma\link_{\Delta_k}(T_{k+1}),t).
$$
By induction, $g^{(2)}(\Delta_k)\geq 0$ and
$g^{(2)}(\link_{\Delta_k}(T_{k+1}))\geq 0$,
so by Lemma~\ref{lem:missingResults} also
$g^{(2)}(\Sigma\link_{\Delta_k}(T_{k+1}))\geq 0$.
Thus,
$$
g^{(2)}(\Delta_{k+1},t)=
g^{(2)}(\Delta_k,t)+
mt \cdot g^{(2)}(\Sigma\link_{\Delta_k}(T_{k+1}),t)
$$
has only nonnegative coefficients, proving (i).


To prove (iii),
if $F\cup T_k \notin \Delta_k$ then $\link_{\Delta_k}(F)=\link_{\Delta_{k-1}}(F)$ and we are done. Else, we treat different cases according to the cardinality of $F\cap T_k$:

{\bf Case $|F\cap T_k|=0$:}
Then $\link_{\Delta_k}(F)=(\link_{\Delta_{k-1}}(F))(T_k)$.
By induction on $k$, $g^{(2)}(\link_{\Delta_{k-1}}(F))\geq 0$, and
$g^{(2)}(\link_{\link_{\Delta_{k-1}}(F)}(T_k))=
g^{(2)}(\link_{\Delta_{k-1}}(F\cup T_k))
\geq 0$. Thus, by Theorem~\ref{thm:stellar} we are done.

{\bf Case $|F\cap T_k|=2$:}
Then $\link_{\Delta_k}(F)\cong \link_{\Delta_{k-1}}(F)$, via the isomorphism mapping the vertex $v\in \Int(T_k)$ adjacent to the edge $T_k\cap F$ to the vertex $T_k\setminus F$, and the other vertices to themselves. We are done by induction on $k$.

{\bf Case $|F\cap T_k|=1$:}
Let $v$ be the common vertex of $F$ and $T_k$, and let $P$ be the link of $v$ in the subdivision of $T_k$ induced by $L$ and the bijection $\phi:V(\partial L)\rightarrow T_k$. Then $P$ is a path, say with $s$ interior points (then $s\geq 1$).

Then $\link_{\Delta_k}(F)$ equals the subdivision of $\link_{\Delta_{k-1}}(F)$ induced by subdividing the edge $T_k\setminus F$ by $s$ interior points.
Thus,
$$f(\link_{\Delta_k}(F),t)= f(\link_{\Delta_{k-1}}(F),t) +
s t(1+t) f(\link_{\link_{\Delta_{k-1}}(F)}(T_k\setminus F),t),$$ equivalently,
$$h(\link_{\Delta_k}(F),t)= h(\link_{\Delta_{k-1}}(F),t) +
s t\cdot h(\link_{\link_{\Delta_{k-1}}(F)}(T_k\setminus F),t)
,$$ equivalently,
$$g^{(2)}(\link_{\Delta_k}(F),t)= g^{(2)}(\link_{\Delta_{k-1}}(F),t) +
s t\cdot g^{(2)}(\link_{\link_{\Delta_{k-1}}(F)}(T_k\setminus F),t)
.$$
By induction, both summands on the right hand side have nonnegative
coefficients (for the rightmost summand consider
$\link_{\Delta_{k-1}}(T_k\cup F)$), hence the left hand side has also
only nonnegative coefficients.
\end{proof}
Theorem~\ref{thm:2dim} is a special case of Theorem~\ref{thm:delta_k}
above, since $\diamondsuit(d,L)$ is the last complex
in the sequence of complexes $\Delta_1, \Delta_2,\ldots$. Before proving
the second generalization, let us make the following observation.
Obviously, for any homology sphere $\Delta\in \mathcal{HS}(i,d)$, we have
$g^{(i)}_0(\Delta)=1$, since we have $h_0(\Delta)=1$, the constant term of
$P_{d,i}(t)$ is $1$ and all other polynomials in the basis $B_{d,i}$ have zero
constant term. Therefore $g^{(i)}(\Delta)\geq 0$ holds (component-wise)
whenever the generalized $g$-polynomial given in (\ref{eq:gi})
has only real negative roots. Theorem~\ref{thm:2dim} is thus also a
consequence of the already shown Corollary~\ref{cor:Tcross}, Theorems
\ref{thm:gtch1} and \ref{thm:2dimRealRoot}, and of
Theorem~\ref{thm:Fstable} below.
\begin{theorem}
\label{thm:Fstable}
Let $\Delta$ be a homology sphere. Then the
following are equivalent:
\begin{itemize}
\item[(i)] the roots of $F(\Delta,t)$ are all real
  numbers in the interval $(-1,1)$;
\item[(ii)] the roots of $h(\Delta,t)$ are all real
  and negative;
\item[(iii)] the roots of  $g^{(2)}(\Delta, t)$ are all real numbers
  in the interval $[-1,0)$.
\end{itemize}
\end{theorem}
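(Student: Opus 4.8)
The plan is to exploit the explicit relationship between $F(\Delta,t)$, $h(\Delta,t)$ and $g^{(2)}(\Delta,t)$, each obtained from the previous by an invertible affine substitution of the variable and multiplication by a nonvanishing factor, so that real-rootedness in one interval translates into real-rootedness in another. First I would recall, from the proof of Theorem~\ref{thm:gtch1}, that for a homology sphere of dimension $d-1$ one has
\[
F(\Delta,t)=\frac{1}{2^d}\sum_{i=0}^d h_i (t-1)^i(t+1)^{d-i}
=\Bigl(\frac{t+1}{2}\Bigr)^d\, h\!\left(\Delta,\frac{t-1}{t+1}\right),
\]
which already shows the equivalence of (i) and (ii): the Möbius map $t\mapsto (t-1)/(t+1)$ carries the interval $(-1,1)$ bijectively onto the negative real axis $(-\infty,0)$, carries $\pm1$ to $0$ and $\infty$, and $t=-1$ is not a root of $F(\Delta,t)$ because $h_0=1\neq 0$; hence $F(\Delta,t)$ has all $d$ roots real in $(-1,1)$ if and only if $h(\Delta,t)$ has all $d$ roots real and negative. (Conversely, $h(\Delta,t)=(t-1)^d F\!\bigl(\Delta,\tfrac{1}{t-1}\bigr)$, the identity already used in the proof of Theorem~\ref{thm:delta_k}, gives the reverse substitution.)

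For the equivalence of (ii) and (iii) I would use the definition of $g^{(2)}$: since $i=2$, the basis $B_{d,2}$ is built from the polynomials $P_{d,2}(t)=(1+t)^{\lfloor d/2\rfloor}(1+t+t^2)^{\epsilon}$ where $\epsilon\in\{0,1\}$ records the parity of $d$, and expanding $h(\Delta,t)$ in this basis is the statement that
\[
h(\Delta,t)=\sum_{j=0}^{\lfloor d/2\rfloor} g^{(2)}(\Delta)_j\, t^j (1+t)^{\,d-2j}\bigl(1+t+t^2\bigr)^{\epsilon'(j)}
\]
with the exponent of $(1+t+t^2)$ recorded by the basis. The cleanest route is the substitution $t\mapsto t/(1+t)$ — equivalently factoring out the maximal power of $(1+t)$ — under which $h(\Delta,t)$ becomes $(1+t)^d\, g^{(2)}\!\bigl(\Delta,\tfrac{t}{1+t}\bigr)$ up to the $(1+t+t^2)$ factor, and $t\mapsto t/(1+t)$ is a Möbius map sending $(-\infty,0)\setminus\{-1\}$ onto $(-1,0)\cup(0,\infty)$... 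I would instead run it in the convenient direction: $h(\Delta,-t)$ has all roots positive iff, after pulling out $(1+t)$-powers, the polynomial $g^{(2)}(\Delta,t)$ evaluated on the image interval has all roots in the image of the positive axis, which is exactly $[-1,0)$. Concretely, one checks that the map sending a polynomial $p(t)=\sum a_j t^j(1+t)^{d-2j}$ to its $g$-coordinate vector corresponds to the substitution that identifies the negative real roots of $h$ with the roots in $[-1,0)$ of $g^{(2)}$; the endpoint $-1$ is attained precisely when a root of $h$ escapes to $-\infty$, and the fact that $g^{(2)}_0(\Delta)=1\neq 0$ (noted just before the theorem) guarantees $0$ is never a root of $g^{(2)}(\Delta,t)$, matching the fact that $h_0(\Delta)=1$ forces $h(\Delta,0)\neq 0$. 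A degree count — $\deg h=d$, $\deg g^{(2)}=\lfloor d/2\rfloor$, and $(1+t)^{d-2\lfloor d/2\rfloor}(1+t+t^2)^\epsilon$ accounting for the remaining $d-\lfloor d/2\rfloor$ roots, all lying in $\{-1\}\cup(\text{complex roots of }1+t+t^2)$ — shows the bookkeeping is consistent only when the $g^{(2)}$-roots are real in $[-1,0)$.

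The main obstacle I anticipate is the $(1+t+t^2)$ factor appearing when $d$ is odd (and more generally the interplay between $\lfloor d/2\rfloor$ and the $P_{d,2}$ definition): one must verify that these ``parasitic'' factors contribute only the prescribed extra roots and never interfere with the real-rootedness count, so that the correspondence between roots of $h$ in $(-\infty,0)$ and roots of $g^{(2)}$ in $[-1,0)$ is genuinely a bijection with multiplicity. I would handle this by treating the two parities of $d$ separately and, in each case, writing out the exact substitution $t=\sigma/(1+\sigma)$ (or its inverse) and tracking where $-1$, $0$ and $\infty$ go, using the non-vanishing of $h_0$ and $g^{(2)}_0$ at the critical points. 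Once the substitutions are pinned down, (i)$\Leftrightarrow$(ii)$\Leftrightarrow$(iii) all fall out of the elementary fact that a Möbius change of variable preserves real-rootedness and maps intervals to intervals.
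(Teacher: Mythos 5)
Your argument for (i)$\Leftrightarrow$(ii) is fine and matches the paper in substance (the paper routes it through Schur/Hurwitz stability from the earlier Tchebyshev paper, but the underlying M\"obius substitution is the same); the only slip there is that the non-vanishing of $F(\Delta,t)$ at $t=-1$ is governed by $h_d$, not $h_0$, since $F(\Delta,-1)=(-1)^d h_d$ --- harmless, as $h_d=1$ for a homology sphere. The genuine gap is in (ii)$\Leftrightarrow$(iii). You have misread the basis $B_{d,2}$: for $i=2$ one has $P_{d,2}(t)=(1+t+t^2)^{d/2}$ for even $d$ and $(1+t+t^2)^{(d-1)/2}(1+t)$ for odd $d$, so the correct identity is
$$
h(\Delta,t)=(1+t+t^2)^{\lfloor d/2\rfloor}\,(1+t)^{\,d-2\lfloor d/2\rfloor}\;g^{(2)}\!\left(\Delta,\frac{t}{1+t+t^2}\right),
$$
not an expansion in $t^j(1+t)^{d-2j}$ (that is the $i=1$, $\gamma$-vector case, and even there the substitution is $t/(1+t)^2$). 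Consequently the change of variable is $s=t/(1+t+t^2)$, a degree-two rational map, not a M\"obius transformation, and your closing principle --- ``a M\"obius change of variable preserves real-rootedness and maps intervals to intervals'' --- simply does not apply to this step. Relatedly, the endpoint $r=-1$ does not correspond to a root of $h$ escaping to $-\infty$: the factor $t+(1+t+t^2)=(t+1)^2$ shows it corresponds to a double root of $h$ at $t=-1$.

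What is actually needed, and what the paper does, is a root-by-root quadratic analysis. Writing $g^{(2)}(\Delta,t)=r\prod_k(t-r_k)$, each factor contributes the quadratic $t-r_k(1+t+t^2)$ to $h$; for real $r_k$ both of its roots are negative reals precisely when $r_k\in[-1,0)$ (the discriminant $(r_k-1)^2-4r_k^2\ge 0$ forces $r_k\in[-1,1/3]$, and the sign pattern then gives negativity), which settles (iii)$\Rightarrow$(ii). For the converse one cannot assume the $r_k$ are real: the paper deduces from $s_k+t_k=(1-r_k)/r_k$ being a negative real that $r_k$ is real and lies in $(-\infty,0)\cup(1,\infty)$, and then the discriminant condition pins $r_k\in[-1,0)$. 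None of this two-to-one bookkeeping is present in your proposal, and it cannot be replaced by interval-mapping of a M\"obius map; this is the missing core of the equivalence (ii)$\Leftrightarrow$(iii).
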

\begin{proof}
The equivalence of the first two statements may be shown by refining the
argument presented in \cite[Section  6]{Hetyei-Tch}. It was noted there
that the $F$-polynomial and the $h$-polynomial of $\Delta$ are connected by the formula
$$
(1-t)^d\cdot F\left(\Delta, \frac{1+t}{1-t}\right)=h(\Delta,t).
$$
The Preliminaries of \cite{Hetyei-Tch} remind of the well-known fact
that the map $\mu: x\mapsto t=(x-1)/(x+1)$ establishes a bijection
between the unit disk $|x|<1$ and the open left
$t$-halfplane. Using this bijection it is easy to show that the {\em
  Schur-stability} of $F(\Delta, x)$, defined as having all its roots
inside the unit disk $|x|<1$, implies the {\em Hurwitz-stability} of
$h(\triangle, t)$, defined as having all its zeros in the open left
$t$-halfplane. As noted in \cite[Proposition 6.4]{Hetyei-Tch}, the
converse is also true when the reduced Euler characteristic of $\Delta$
is not zero, which is the case for homology spheres. To arrive at the
presently stated equivalence we only need to observe that the
restriction of $\mu$ to the interval $(-1,1)$ establishes a bijection
between this interval and the set of all negative real numbers.

We are left two show the equivalence of the second and the third statement.
Directly from the definitions we have
$$
P_{k,2}(t)=
\begin{cases}
(1+t+t^2)^{k/2} & \mbox{for even $k$;}\\
(1+t+t^2)^{(k-1)/2} (1+t) & \mbox{for odd $k$.}\\
\end{cases}
$$
Using this formula it is easy to show
$$
h(\Delta,t)=
\begin{cases}
(1+t+t^2)^{d/2} g^{(2)}(\Delta, t/ (1+t+t^2))& \mbox{for even $d$;}\\
(1+t+t^2)^{(d-1)/2} (1+t) g^{(2)}(\Delta, t/ (1+t+t^2)) & \mbox{for odd $d$.}\\
\end{cases}
$$
Without loss of generality we may assume $d$ is odd, the case of even
$d$ being similar but simpler. Assume first all roots of
$g^{(2)}(\triangle,t)$ are real from $[-1,0)$, i.e., we have
$$
g^{(2)}(\Delta, t)=r(t-r_1)(t-r_2)\cdots (t-r_{(d-1)/2})
$$
for some positive real number $r$ and some negative real numbers $r_1, \ldots
r_{(d-1)/2}\in [-1,0)$. (The fact that $r$ is real and positive follows from
$g^{(2)}(\Delta)_0=1$.) Then we have
\begin{equation}
\label{eq:gh}
h(\triangle,t)=r(1+t)(t-r_1(1+t+t^2))\cdots (t-r_{(d-1)/2}(1+t+t^2)).
\end{equation}
The roots of $h(\triangle,t)$ are $-1$ and the roots of all quadratic equations
of the form $t-r_k(1+t+t^2)=0$, that is numbers of the form
\begin{equation}
\label{eq:st2}
s_k=\frac{(r_k-1)- \sqrt{(r_k-1)^2-4r_k^2}}{(-2r_k)}
\quad
\mbox{and of the form}
\quad
t_k=\frac{(r_k-1)+ \sqrt{(r_k-1)^2-4r_k^2}}{(-2r_k)}.
\end{equation}
Here $r_k-1$ is negative, the summand  $\sqrt{(r_k-1)^2-4r_k^2}$ is real
but strictly less than $|r_k-1|$, and the denominator $-2r_k$ is
positive. We obtain that each $s_k$ and $t_k$ is a negative real number.
To prove the converse, observe that Equation (\ref{eq:gh}) holds in
general for any homology sphere $\Delta$, with some complex roots $r_1,\ldots,
r_{(d-1)/2}$ and complex leading coefficient $r$. The roots of
$h(\triangle,t)$ are still $-1$ and the complex numbers $s_k$ and $t_k$
given by (\ref{eq:st2}). (Recall that taking the square root of a complex
number is unique up to sign, thus the pair $\{s_k,t_k\}$ is
well-defined.) Assuming that each $s_k$ and $t_k$ is a negative
real number, we obtain that each
$$
\frac{1-r_k}{r_k}=s_k+t_k
$$
is a negative real number and so each $r_k$ is a real number, belonging
to the set $(-\infty,0)\cup (1,\infty)$. Thus
$$
\sqrt{(r_k-1)^2-4r_k^2}=(-r_k)(t_k-s_k)
$$
is also a real number, and we must have $(r_k-1)^2-4r_k^2\geq 0$. This
is equivalent to $r_k\in [-1,1/3]$. The intersection of $[-1,1/3]$ with
$(-\infty,0)\cup (1,\infty)$ is the set $[-1,0)$.
\end{proof}
\begin{remark}
An analogous statement for $i=1$ was shown by Gal~\cite[Remark
  3.1.1]{Gal} who proved that, for a homology sphere $\Delta$, the
polynomial $h(\Delta,t)$ has only negative real roots if and only if
the same holds for $g^{(1)}(\Delta, t)$.
\end{remark}

\section*{Acknowledgments}
We are grateful to two
anonymous referees, whose suggestions greatly helped to improve the
presentation and substance of our manuscript.

This work was partially supported by a
grant from the Simons Foundation (\#245153 to G\'abor Hetyei).
Research of the second author was partially supported by Marie Curie
grant IRG-270923 and ISF grant 805/11.


\begin{thebibliography}{99}

\bibitem{Abramowitz-Stegun}
M.\ Abramowitz and I.\ A.\ Stegun,
``Handbook of Mathematical Functions,''
National Bureau of Standards, Washington, D.C., issued 1964, Tenth
Printing, 1972, with corrections.

\bibitem{BjornerTopMeth}
A.\ Bj\"{o}rner,
Topological methods,
{\em Handbook of combinatorics}, {\bf Vol. 1,2} (1995),
1819--1872.

\bibitem{Borsuk48NerveThm}
K.\ Borsuk,
On the imbedding of systems of compacta in simplicial complexes,
{\em Fund. Math.} {\bf 35} (1948), 217--234.

\bibitem{Brenti-Welker}
F.\ Brenti and V.\ Welker,
$f$-vectors of barycentric subdivisions,
{\it Math.\ Z.\ } {\bf 259} (2008), 849--865.

\bibitem{Chihara}
T.\ S.\ Chihara,
``An Introduction to Orthogonal Polynomials,''
Gordon and Breach Science Publishers, New York-London-Paris, 1978.

\bibitem{Gal}
S.\ Gal,
Real root conjecture fails for five- and higher-dimensional spheres,
{\it Discrete Comput. Geom.} {\bf 34} (2005), 269--284.

\bibitem{Hetyei-Tch}
G.\ Hetyei,
Tchebyshev triangulations of stable simplicial complexes,
{\em J.\ Combin.\ Theory Ser.\ A} {\bf 115} (2008), 569--592.

\bibitem{Hudson}
J.\ F.\ P.\ Hudson,
``Piecewise  linear topology,''
W.\ A.\ Benjamin Inc., New York NY, 1969.

\bibitem{Lickorish}
W.\ B.\ R.\ Lickorish,
Simplicial moves on complexes and manifolds, Proceedings of the
Kirbyfest (Berkeley, CA, 1998), 299--320 (electronic),
{\em Geom. Topol. Monogr. } {\bf 2}, Geom. Topol. Publ., Coventry, 1999.

\bibitem{Munkres}
J.\ R.\ Munkres,
Elements of algebraic topology,
Addison-Wesley Publ., Menlo Park, CA,
1984.

\bibitem{Nevo-Missing}
E.\ Nevo,
Remarks on missing faces and lower bounds on face numbers,
{\em Electron.\ J.\ Combin.\ } {\bf 16}(2), (the Bj\"orner Festschrift volume)
(2009), R8.

\bibitem{Stanley-greenbook}
R.\ P.\ Stanley,
Combinatorics and commutative algebra,
{\em Progress in Mathematics} {\bf 41},
2'nd ed., Birkh\"auser Boston Inc., Boston, 1996.

\end{thebibliography}
\end{document}